\newtheorem{thm}{Theorem}[section]
\newtheorem{lma}[thm]{Lemma}
\newtheorem{cor}[thm]{Corollary}
\newtheorem{defn}[thm]{Definition}
\newtheorem{prop}[thm]{Proposition}
\newtheorem{hyp}[thm]{Hypothesis}
\newcommand{\R}{\mathbb{R}}
\newcommand{\Q}{\mathbb{Q}}
\newcommand{\N}{\mathbb{N}}
\providecommand{\norm}[1]{\lVert#1\rVert}
\renewcommand{\geq}{\geqslant}
\renewcommand{\leq}{\leqslant}
\renewcommand{\epsilon}{\varepsilon}
\renewcommand{\H}{\text{H}}
\renewcommand{\l}{\text{loc}}
\renewcommand{\i}{\mathbf{i}}
\newcommand{\n}{\mathbf{n}}
\renewcommand{\j}{\mathbf{j}}
\renewcommand{\geq}{\geqslant}
\renewcommand{\leq}{\leqslant}
\renewcommand{\H}{\mathcal{H}}
\renewcommand{\i}{\mathbf{i}}
\providecommand{\I}{\mathcal{I}}
\providecommand{\cont}{C([0,1])}
\providecommand{\lip}{C^{0,1}([0,1])}
\renewcommand{\j}{\mathbf{j}}
\providecommand{\w}{\mathbf{w}}
\renewcommand{\l}{\mathcal{L}}
\providecommand{\p}{\mathbf{p}}
\providecommand{\q}{\mathbf{q}}
\providecommand{\i}{\mathbf{i}}
\providecommand{\m}{\mathcal{M}}
\providecommand{\norm}[1]{\lVert#1\rVert}
\providecommand{\one}{\mathbf{1}}
\renewcommand{\c}{\mathcal{C}_a}
\providecommand{\n}{n^{\ast}}
\begin{document}

\title[]{A new proof of the dimension gap for the Gauss map}

\author{Natalia Jurga}
\address{Natalia Jurga: Department of Mathematics, University of Surrey, Guildford, GU2 7XH, UK}
\email{N.Jurga@surrey.ac.uk }

\date{\today}

\subjclass[2010]{}

\begin{abstract}
In \cite{kpw}, Kifer, Peres and Weiss showed that the Bernoulli measures for the Gauss map $T(x)= \frac{1}{x} \mod 1$ satisfy a `dimension gap' meaning that for some $c>0$, $\sup_{\p} \dim \mu_{\p} <1-c$, where $\mu_{\p}$ denotes the (pushforward) Bernoulli measure for the countable probability vector $\p$. In this paper we propose a new proof of the dimension gap. By using tools from thermodynamic formalism we show that the problem reduces to obtaining uniform lower bounds on the asymptotic variance of a class of potentials.
\end{abstract}

\keywords{Continued fractions, Bernoulli measures, Dimensions of measures, Thermodynamic formalism, Transfer operators.}
\maketitle

\section{Introduction}\label{intro}

Let $x \in [0,1] \setminus \Q$. It is well known that there exists a sequence $\{i_n\}_{n \in \N}$ known as the \emph{continued fraction expansion} of $x$ that satisfies
$$x=\cfrac{1}{i_1+\cfrac{1}{i_2+\cfrac{1}{i_3+\ldots}}}.$$
Continued fractions are closely related to the \emph{Gauss map} which is defined as $T: [0,1] \setminus \Q \to [0,1] \setminus \Q$
$$T(x)=\frac{1}{x} \mod 1.$$
Let $\Sigma=\N^{\N}$, $\Sigma^{\ast}$ denote the set of all  words of finite length with entries in $\N$ and $\sigma: \Sigma \to \Sigma$ be the shift map given by $\sigma((i_n)_{n \in \N})=(i_{n+1})_{n \in \N}$. Often we will let $\i$ denote a point $\i=(i_n)_{n \in \N} \in \Sigma$. $T$ is `coded' by $(\Sigma, \sigma)$ meaning that $T \circ \Pi= \Pi \circ \sigma$ where the `coding map' $\Pi: \Sigma \to [0,1] \setminus \Q$ is given by
$$\Pi(\i)= \lim_{n \to \infty} T_{i_1}^{-1} \circ \ldots T_{i_n}^{-1}([0,1])= \cfrac{1}{i_1+\cfrac{1}{i_2+\cfrac{1}{i_3+\ldots}}}.$$

It is well known that $T$ has an absolutely continuous invariant probability measure $\mu_T$ given by
$$\mu_T(A)= \frac{1}{\log 2} \int_A \frac{1}{1+x} \textup{d}x.$$
By using the coding map $\Pi$, we can construct many more $T$-invariant measures, by `pushing forward' $\sigma$-invariant measures from $\Sigma$. In particular, if $m$ is a $\sigma$-invariant measure then $\mu=m \circ \Pi^{-1}$ is a $T$-invariant measure. In this paper we will be focused on pushforward \emph{Bernoulli measures}. Given a countable probability vector $\p=(p_n)_{n \in \N}$, let $m_{\p}$ denote the Bernoulli measure on $\Sigma$ which satisfies $m_{\p}([i_1\ldots i_n])=p_{i_1} \ldots p_{i_n}$, where $[ i_1 \ldots i_n]=\{\j \in \Sigma: j_1=i_1, \ldots ,j_n=i_n\}$ denotes the cylinder set for the word $i_1 \ldots i_n$. We define $\mu_{\p}=m_{\p} \circ \Pi^{-1}$ and we will also call this a Bernoulli measure. 

We will be interested in the \emph{Hausdorff dimension} of Bernoulli measures, where the Hausdorff dimension of a Borel probability measure $\mu$ is defined as
$$\dim \mu= \inf\{\dim A: \mu(A)=1\}$$
where $\dim A$ denotes the Hausdorff dimension of the set $A$. By the work of Walters \cite{walters}, $\mu_T$ is the unique absolutely continuous invariant probability measure for $T$ and realises the supremum
\begin{eqnarray}
h(\mu_T)- \int \log|T^{\prime}|\textup{d}\mu_T= \sup_{\mu \in \mathcal{M}(T)}\left\{ h(\mu)- \int \log|T^{\prime}|\textup{d}\mu: \int \log|T^{\prime}|\textup{d}\mu< \infty\right\}=0
\label{vp}
\end{eqnarray}
where $\mathcal{M}(T)$ denotes all $T$-invariant probability measures and $h(\cdot)$ denotes the measure-theoretic \emph{entropy}. As a direct consequence of (\ref{vp}) we deduce that for any $\p$ for which $h(\mu_{\p})< \infty$, 
\begin{eqnarray}
\dim \mu_{\p}= \frac{h(\mu_{\p})}{\chi(\mu_{\p})} < 1
\label{less1}
\end{eqnarray}
where the formula $\dim (\cdot)=\frac{h(\cdot)}{\chi(\cdot)}$ is known to hold for all finite entropy ergodic measures and $\chi(\mu_{\p})= \int \log|T^{\prime}| \textup{d}\mu_{\p}$ is known as the \emph{Lyapunov exponent} of $\mu_{\p}$. 

What is not clear from (\ref{less1}) is whether there is a `\emph{dimension gap}' at 1. We say that there is a dimension gap if there exists some $c>0$ for which
$$\sup_{\p \in \mathcal{P}} \dim \mu_{\p} \leq 1-c$$
where $\mathcal{P}$ denotes the simplex of all probability vectors. In this paper we will prove the following result.

\begin{thm} \label{main}
There exists $c>0$ such that 
$$\sup_{\p \in \mathcal{P}} \dim \mu_{\p} \leq 1-c.$$
\end{thm}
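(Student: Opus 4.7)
The plan is to use the thermodynamic formalism to reduce Theorem~\ref{main} to a uniform lower bound on the asymptotic variance of a $\p$-dependent family of potentials. Write $s(\p) := \dim \mu_\p$ and let $p(s) := P(-s \log|T'(\Pi \cdot)|)$ denote the pressure on $(\Sigma, \sigma)$. This function is convex and strictly decreasing on its domain of finiteness, with $p(1) = 0$ and $p'(1) = -\chi(\mu_T)$, and the lift $\tilde{\mu}_T$ of the Gauss measure is its unique equilibrium at $s=1$. The variational principle yields $h(m_\p) - s \chi(m_\p) \leq p(s)$, and specialising to $s = s(\p)$ via (\ref{less1}) gives $p(s(\p)) \geq 0$. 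Since $p$ is strictly decreasing near $1$, it suffices to exhibit a constant $c_0 > 0$, independent of $\p$, such that $p(s(\p)) \geq c_0$. The regime $s(\p) \leq 1/2$ is already immediate and may be set aside (where we also recall that $p(s)$ is finite precisely for $s > 1/2$).

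To extract this gap, I would pass to potentials normalised to pressure zero: set $\psi_\p(\i) := \log p_{i_1}$ (pressure $0$, equilibrium $m_\p$) and $\varphi_\p(\i) := -s(\p) \log|T'(\Pi \i)| - p(s(\p))$ (pressure $0$, equilibrium $\mu_{s(\p)}$). Along the linear family $\Phi_t^\p := (1-t)\psi_\p + t\varphi_\p$, the pressure curve $F_\p(t) := P(\Phi_t^\p)$ satisfies $F_\p(0) = F_\p(1) = 0$, and standard formulas for the derivatives of pressure give
$$F_\p'(0) = \int (\varphi_\p - \psi_\p)\, dm_\p = \big(h(m_\p) - s(\p)\chi(m_\p)\big) - p(s(\p)) = -p(s(\p))$$
and $F_\p''(t) = \sigma_{\mu_t^\p}^2(\varphi_\p - \psi_\p)$, where $\mu_t^\p$ is the equilibrium of $\Phi_t^\p$ and $\sigma_\mu^2$ denotes the asymptotic variance. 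Integration by parts yields the identity
$$p(s(\p)) = \int_0^1 (1-t)\, \sigma_{\mu_t^\p}^2(\varphi_\p - \psi_\p)\, dt,$$
and since $\varphi_\p - \psi_\p$ differs by a constant from $-s(\p) \log|T'(\Pi\cdot)| - \log p_{i_1}$, the dimension gap is reduced to exhibiting $c_0 > 0$ with
$$\sigma_{\mu_t^\p}^2\!\big(-s(\p) \log|T'(\Pi \cdot)| - \log p_{i_1}\big) \geq c_0$$
uniformly in $\p \in \mathcal{P}$ and for $t$ in a set of positive Lebesgue measure in $[0,1]$. This is the family of potentials alluded to in the abstract.

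The hard part will be establishing this uniform variance lower bound. The asymptotic variance of a H\"older potential under an equilibrium state vanishes precisely when the potential is cohomologous to a constant, and by a Livšic-type argument this reduces to identities at periodic orbits of $\sigma$. Ruling these out uniformly over all $\p$ is delicate: one must show that no probability vector can make $\log p_{i_1}$ cancel $s(\p) \log|T'(\Pi\cdot)|$ up to a coboundary, and moreover obtain a \emph{quantitative} lower bound on the distance from such a cohomology rather than mere strict positivity. Two further sources of technical difficulty are the non-compactness of $\Sigma$, which will require approximation by finite subsystems and a spectral gap argument for the associated transfer operators, and the possibility of heavy tails in $\p$, which demands that any truncation and tail estimates be uniform in $\p$ so that the equilibrium states $\mu_t^\p$ remain well-controlled.
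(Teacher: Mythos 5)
Your reduction is correct and is a clean variant of the paper's own scheme, but it parameterises the problem differently. The paper works with the implicit function $\beta_{\p}(t)$ defined by $P(-\beta_{\p}(t)\log|T'|+tf_{\p})=0$, for which $\beta_{\p}(0)=1$, $\beta_{\p}(1)=0$ and $-\beta_{\p}'(1)=\dim\mu_{\p}$; a uniform positive lower bound on $\beta_{\p}''$ over a fixed sub-interval then forces $|\beta_{\p}'(1)|<1-c$. You instead fix $s(\p)=\dim\mu_{\p}$, take the linear interpolation $\Phi_t^{\p}=(1-t)\psi_{\p}+t\varphi_{\p}$ between the two normalised potentials, and use the Taylor identity $p(s(\p))=\int_0^1(1-t)F_{\p}''(t)\,\textup{d}t$ together with the strict monotonicity of $p$. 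The bookkeeping is tidier in your version (you avoid the implicit function theorem and the need to verify analyticity of $\beta_{\p}$ up to $t=0$), while the paper's version keeps both endpoints at pressure zero and makes the relevant variance $\sigma^2_{\mu_{\p,t}}(-\beta_{\p}'(t)\log|T'|+f_{\p})$ depend on $t$; both ultimately require a lower bound on the asymptotic variance of a potential that is essentially $-s\log|T'|-\log p_{i_1}$ for some $s\geq 3/4$, under an equilibrium state whose weights depend on $\p$ and $t$.

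The genuine gap is that the uniform variance lower bound is asserted rather than proved, and that is where essentially all the work in the paper lies. Concretely, your sketch does not supply: \emph{(i)} a preliminary case split on $\p$, which the paper needs (Hypothesis~\ref{hyp1} and Lemmas~\ref{infinite thm}--\ref{tail lemma}) because the periodic-orbit argument requires quantitative control of $p_1,p_2$ and is only run on vectors with $p_1,p_2\geq\epsilon_0$ or $p_1>\psi$, the remaining $\p$ being handled directly by the Kifer--Peres--Weiss escape-to-infinity bound on $\dim\mathcal{E}_{\lambda}$; \emph{(ii)} the device (Definition~\ref{coboundary} and Corollary~\ref{rewrite2}) of choosing the cohomologous representative $\tilde f=f+U-U\circ T$ with $\m(U)=\sum_{n\geq1}\m^n(f)$ so that the Green--Kubo tail vanishes identically and the variance becomes the single integral $\int\tilde f^2\,\textup{d}\mu$ — without this step the infinite sum of correlations makes a \emph{lower} bound intractable; \emph{(iii)} a periodic-orbit separation estimate (Lemma~\ref{lemma periodic}) showing one of $z_1,z_2,z_{12},z_{21}$ carries a definite amount of the potential, which is the quantitative Livšic input you gesture at; and \emph{(iv)} uniform-in-$(\p,t)$ Lasota--Yorke/cone estimates (Lemmas~\ref{fixed pt}--\ref{e1e2}) and a uniform Gibbs constant (Lemma~\ref{measure lemma}) to ensure $[\tilde f_{\p,t}]_\alpha$ and $\mu_{\p,t}(\mathcal{I}_{\i|_m})$ are controlled by constants independent of $\p$ and $t$. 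Each of these is non-trivial and none is supplied in your proposal; in particular, ruling out cohomology to a constant "uniformly over all $\p$" is not merely delicate, it fails without the preliminary case split, since vectors with $p_1\to 1$ or with all mass escaping to infinity need a separate argument.
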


Theorem \ref{main} was already proved by Kifer, Peres and Weiss \cite{kpw} who showed that
\begin{eqnarray}
\sup_{\p \in \mathcal{P}} \dim \mu_{\p} \leq 1-10^{-7}.
\label{kpwgap}
\end{eqnarray}
We briefly sketch their proof. Given $\mathbf{w} \in \Sigma^{\ast}$ and $\delta>0$ let $\Gamma_{\mathbf{w}}^{\delta}$ be defined by 
$$\Gamma_{\mathbf{w}}^{\delta}=\left\{ x \in (0,1): \limsup_{n \to \infty} \left|\frac{1}{n}\sum_{i=0}^{n-1} \one_{\mathbf{w}}(T^ix)-\mu_T(\Pi(\mathbf{w}))\right|>\delta\right\}$$
which is the set of points whose orbits visit the interval $\Pi([\mathbf{w}])$ with an asymptotic frequency which differs by $\delta$ from the one prescribed by $\mu_T$. By using the ergodic theorem it is not difficult to show that for some $\delta_0>0$, $\dim \mu_{\p} \leq \max\{\dim \Gamma_{1}^{\delta_0}, \dim \Gamma_{11}^{\delta_0}\}$ for all $\p$. Also define $J_n(x)= \Pi([i_1\ldots i_n])$ if $ x \in \Pi([i_1\ldots i_n])$, that is, $J_n(x)$ is the `level $n$' projected cylinder that $x$ belongs to, let $|J_n(x)|$ denote the diameter of $J_n(x)$ and consider the set
\begin{eqnarray} \label{e1}
\mathcal{E}_{\lambda}= \bigcap_{j=1}^{\infty} \bigcup_{n=j}^{\infty} \left\{x \in (0,1): |J_n(x)| \leq \exp(-\lambda n)\right\}
\end{eqnarray}
which is the set of points whose orbits `frequently' visit a `small' neighbourhood of 0. Kifer, Peres and Weiss showed that for some $\lambda_0>0$, $\dim \mathcal{E}_{\lambda_0}<1$ which allowed them to reduce the problem down to finding an upper bound for the dimension of the set of points in $\Gamma_{1}^{\delta_0}$ and $\Gamma_{11}^{\delta_0}$ which \emph{don't} belong to $\mathcal{E}_{\lambda_0}$. They then showed that for any $\delta>0$,
$$\sup_{\w \in \Sigma^{\ast}} \dim (\Gamma_{\w}^{\delta}\setminus \mathcal{E}_{\lambda_0}) < 1$$
which completed the proof.

Another proof of Theorem \ref{main} was given by the author and Baker in \cite{bj} where it was shown that there exists a Bernoulli measure $\mu_{\q}$ such that
$$\dim \mu_{\q}= \sup_{\p \in \mathcal{P}} \dim \mu_{\p}.$$
Notice that by (\ref{less1}) this immediately implies the existence of a dimension gap, however it gives no quantitative information about the size of the gap.

In this paper we propose a new proof of the dimension gap. All objects which have been discussed so far have some interpretation in the language of thermodynamic formalism; for instance $\mu_{T}$ and $\mu_{\p}$ are \emph{Gibbs measures}, the dimension can typically be written in terms of the \emph{entropy}, and the \emph{variational principle} (\ref{vp}) describes the existence and uniqueness of a measure of maximal dimension. Therefore, it is a natural question to ask what is the meaning of a \emph{dimension gap} within the framework of thermodynamic formalism. As a consequence of the new proof that is given in this paper we demonstrate that a dimension gap corresponds to the existence of uniform lower bounds for the \emph{asymptotic variance} of a class of potentials. This is of particular interest since this appears to be a rare example of an application of lower bounds for the variance. We remark that while our approach does give some information about the size of the gap, since it does not improve on \cite{kpw} we will not make it explicit in order to keep our arguments concise. 

Throughout the paper we will assume that if $h(\mu_{\p})=-\sum_{n \in \N} p_n \log p_n < \infty$ then the entries $(p_n)_{n \in \N}$ of the probability vector $\p$ are decreasing and satisfy $p_n =O(\frac{1}{n^2})$ (meaning that there exists a constant $K>0$ such that $p_n \leq \frac{K}{n^2}$ for all $n$). To see that we can make the first assumption, suppose that for some $k \in \N$, $p_{k+1} < p_k$. Define $\p^{\prime}$ to be the probability vector given by
\begin{equation*}
\begin{array}{ccc}
p_n^{\prime}=p_n & \textnormal{if} & n \notin \{k, k+1\} \\
p_n^{\prime}= \frac{p_k+p_{k+1}}{2} &\textnormal{if} & n \in \{k, k+1\}.
\end{array}
\end{equation*}
Then since $h(\mu_{\p^{\prime}})> h(\mu_{\p})$ and $\chi(\mu_{\p^{\prime}})< \chi(\mu_{\p})$ (see for instance \cite[Lemma 3.5]{bj}), it follows that $\dim \mu_{\p^{\prime}}> \dim \mu_{\p}$. We can make the second assumption since given any probability vector $\p$ and any $\epsilon>0$, we can choose some probability vector $\q$ with the property that $q_n=0$ for all $n$ sufficiently large whose dimension `approximates' the dimension of $\mu_{\p}$, that is, $|\dim \mu_{\q}-\dim \mu_{\p}|< \epsilon$ (see for instance \cite[Proposition 3.6]{bj}). Since $\mu_{\q}$ is finitely supported, trivially $q_n=O(\frac{1}{n^2})$. Therefore it is sufficient to consider probability vectors that satisfy both assumptions on their weights.

Throughout this paper we denote 
\begin{equation}
\psi= \frac{1}{|T^{\prime}(z_1)|^{\frac{1}{4}}} \in (0,1) \label{psi}.
\end{equation}
Morally there are similarities with \cite{kpw} in the way in which the new proposed proof will be organised. To be precise, while Kifer, Peres and Weiss showed that it was enough to consider the dimension of the set of points in $\Gamma_{1}^{\delta_0}$ and $\Gamma_{11}^{\delta_0}$ which did not belong to $\mathcal{E}_{\lambda_0}$, we'll show that it is actually sufficient to study the dimension of Bernoulli measures whose probability vectors satisfy the following hypothesis for some constant $0<\epsilon< \psi <1$.

\begin{hyp}
The probability vector $\p$ satisfies $-\sum p_n \log p_n < \infty$ and additionally either
\begin{itemize}
\item[(a)] $p_1, p_2 > \epsilon$ or
\item[(b)] $p_1> \psi$.
\end{itemize}
\label{hyp1}
\end{hyp}

In particular, we'll show that there exists $c>0$ and $0<\epsilon< \psi <1$ such that whenever $\p$ does not satisfy Hypothesis \ref{hyp1} for this choice of $\epsilon$ then $\dim \mu_{\p}< 1-c$. Essentially this is down to the fact that if Hypothesis \ref{hyp1} is not satisfied, $\mu_{\p}$ must assign a lot of mass to a small neighbourhood of 0 (since the entries $p_n$ are decreasing) which allows us to bound the dimension of the measure directly from the fact that the Lyapunov exponent is forced to be large.

Consequently this allows us to restrict our attention to $\p$ which satisfy Hypothesis \ref{hyp1}. Fixing such $\p$, by
using tools from thermodynamic formalism we will show that we can relate $\dim \mu_{\p}$ to the derivative of a particular function $\beta_{\p}(t)$ at $t=1$. By using the properties of $\beta_{\p}$ we will show that the problem reduces to obtaining a lower bound on $\beta_{\p}^{\prime\prime}(t)$ which holds uniformly for all $t$ belonging to a compact interval and all $\p$ which satisfy Hypothesis \ref{hyp1}. In turn, this reduces to studying lower bounds on the asymptotic variance of a particular class of potentials, which comprises the main body of work in this paper. 

The paper is organised as follows. In section 2 we provide some preliminaries, including the necessary tools from thermodynamic formalism and some useful properties of the Gauss map. In section 3 we will show that there exist some constants $c, \epsilon_0>0$ such that if $h(\mu_{\p})< \infty$ and $\p$ does not satisfy Hypothesis \ref{hyp1} for $\epsilon=\epsilon_0$ or if $h(\mu_{\p})= \infty$ then $\dim \mu_{\p}< 1-c$. In particular, this will allow us to assume that Hypothesis \ref{hyp1} holds for $\epsilon=\epsilon_0$ for the remainder of the paper. In section 4 we obtain a bound on the dimension of measures which satisfy Hypothesis \ref{hyp1} (for $\epsilon=\epsilon_0$). In section 5 we tie the last two sections together to provide a proof of Theorem \ref{main}. Finally in section 6 we discuss a generalisation of Theorem \ref{main}.

\section{Preliminaries}

\subsection{Symbolic coding.} Let $\Sigma$, $\Sigma^{\ast}$, $\sigma$, $\Pi$ be defined as before. For $\i \in \Sigma^{\ast}$ let $|\i|$ denote the length of the word $\i$. For $\i, \j \in \Sigma$ let $\i \wedge \j \in \Sigma \cup \Sigma^{\ast}$ denote the longest initial block common to both $\i$ and $\j$. We equip $\Sigma$ with the metric $d$ given by $d(\i, \j)=\exp(-|\i \wedge \j|)$ if $|\i \wedge \j|< \infty$ and $d(\i, \j)=0$ otherwise. Given $\i=(i_n)_{n \in \N} \in \Sigma$, we let $\i|_n=i_1 \ldots i_n$ denote the finite word obtained by truncating $\i$ after $n$ digits. Given $i_1 \ldots i_n \in \Sigma^{\ast}$ let $(i_1 \ldots i_n)^{\infty}$ denote the unique periodic point $\i \in \Sigma$ of period $n$ for which $\i|_n=i_1 \ldots i_n$. Given a finite word $i_1 \ldots i_n$, denote $\I_{i_1\ldots i_n}= \Pi([i_1 \ldots i_n])$. Note that since $\I_n= [\frac{1}{n+1}, \frac{1}{n})$, $|\I_n|= \frac{1}{n(n+1)}=O(\frac{1}{n^2})$.

\subsection{Function spaces on $[0,1]$.} Let $C([0,1])$ denote all continuous functions $f:[0,1] \to \R$. Let 
$$[f]_1= \sup_{x \neq y} \frac{|f(x)-f(y)|}{|x-y|}$$
denote the Lipschitz constant of a function $f:[0,1] \to \R$. We say that $f$ is Lipschitz (continuous) if $[f]_1< \infty$. Let $\lip$ denote the space of all bounded Lipschitz continuous functions and equip this with the norm $\norm{\cdot}_{0,1}=[\cdot]_1+\norm{\cdot}_{\infty}$. 

We say that a potential $f: [0,1] \to \mathbb{R}$ is \emph{locally H\"older} if there exist constants $C>0$ and $0<\alpha<1$ such that for all $n \geq 1$ the variations $\textnormal{var}_n(f)$ decay exponentially:
\begin{eqnarray}
\textnormal{var}_n(f)= \sup_{i_1\ldots i_n \in \mathbb{N}^n} \left\{ |f(x)-f(y)|: x,y \in \I_{i_1,\ldots,i_n} \right\} \leq C\alpha^n.
\label{variations}
\end{eqnarray}
Note that $f$ being locally H\"older does not necessarily imply that it is bounded. We define
$$\mathcal{H}_{\alpha}=\left\{f:[0,1] \to \R : \textnormal{$f$ is bounded and  $\sup_n \frac{\textnormal{var}_n(f)}{\alpha^n}< \infty$}\right\} $$
and denote the space of all bounded locally H\"older functions by $\H= \cup_{0<\alpha<1} \H_{\alpha}$.  If $f \in \mathcal{H}_{\alpha}$, define the seminorm $[f]_{\alpha}$ to be the smallest constant $C$ that one can take in (\ref{variations}) and we equip $\mathcal{H}_{\alpha}$ with the norm $\norm{\cdot}_{\alpha}= [\cdot]_{\alpha}+ \norm{\cdot}_{\infty}$.

We say that a locally H\"older potential $f: [0,1] \to \R$ is \emph{summable} if 
\begin{eqnarray}
\sum_{n \in \mathbb{N}} \exp(\sup f|_{\I_{n}}) < \infty.
\label{summable}
\end{eqnarray}

\subsection{Thermodynamic formalism.} We can define the \emph{topological pressure} of a potential $g$ as follows.

\begin{defn}[Topological pressure]
Let $g: [0,1] \to \mathbb{R}$ be a locally H\"older potential. Then the pressure of $g$ is given by
$$P(g) = \lim_{n \to \infty} \frac{1}{n} \log \left( \sum_{x: T^nx=x } \exp(S_ng(x))\right)$$
where $S_ng(x)$ denotes the Birkhoff sum $S_ng(x)=g(x)+ \ldots g(T^{n-1}x)$.
\end{defn}

In general, the pressure of $g$ can either be finite or infinite, but if $g$ is summable then $P(g)< \infty$.

Given a locally H\"older potential $g:[0,1] \to \R$, we say that a measure $\mu_g$ is a \emph{Gibbs measure} for $g$ if there exist constants $C, P> 0$ such that for all $n \in \N$, $\i \in \Sigma^{\ast}$ and $x \in \I_{\i}$,
\begin{eqnarray}
C^{-1} \leq \frac{\mu_g(\I_{\i})}{\exp(S_ng(x)-nP)} \leq C. \label{gibbs}
\end{eqnarray}
Note that we do not require $\mu_g$ to be invariant.

By \cite[Corollary 2.10]{murpf} we know about the existence of $T$-invariant Gibbs measures.

\begin{prop}[Existence of Gibbs measures]
Let $g:[0,1] \to \R$ be a locally H\"older summable potential. Then there exists a unique $T$-invariant (probability) Gibbs measure $\mu_g$ for $g$. 
Moreover, the constant $P$ in (\ref{gibbs}) is given by $P=P(g)$. \label{exist}
\end{prop}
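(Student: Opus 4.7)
My plan is to construct $\mu_g$ via the transfer operator formalism. Define the Ruelle--Perron--Frobenius operator
\[
(\mathcal{L}_g f)(x) = \sum_{y:\, Ty=x} e^{g(y)} f(y) = \sum_{n \in \mathbb{N}} e^{g(1/(n+x))} f(1/(n+x)),
\]
which, by summability (\ref{summable}) together with the locally H\"older assumption, is a bounded positive operator on $C([0,1])$ and on each $\mathcal{H}_\alpha$. The first step is to show that $\mathcal{L}_g$ admits a Perron eigenvalue $\lambda > 0$ with a strictly positive locally H\"older eigenfunction $h$ satisfying $\mathcal{L}_g h = \lambda h$, and that its dual admits a Borel probability eigenmeasure $\nu$ with $\mathcal{L}_g^* \nu = \lambda \nu$. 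Setting $P = \log \lambda$ and normalising $\int h\, d\nu = 1$, I would define $\mu_g = h\nu$.

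Invariance under $T$ then follows from the adjoint relation: for any $\varphi \in C([0,1])$, $\int (\varphi \circ T)\, d\mu_g = \lambda^{-1}\int \mathcal{L}_g(h \cdot \varphi\circ T)\, d\nu = \int \varphi\, d\mu_g$. For the Gibbs property I would iterate the eigenfunction identity on a level-$n$ cylinder $\I_\i$: writing $\mu_g(\I_\i)$ as $\lambda^{-n}\int_{\I_{\sigma^n \i}} e^{S_n g \circ T_\i^{-n}}\, (h \circ T_\i^{-n})\, d\nu$ where $T_\i^{-n}$ is the relevant inverse branch, the locally H\"older control (\ref{variations}) provides bounded distortion for $S_n g$ on $\I_\i$, and positivity and continuity of $h$ supply a uniform constant $C$ so that $C^{-1} \leq \mu_g(\I_\i)/\exp(S_n g(x) - nP) \leq C$ for any $x \in \I_\i$. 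The identification $P = P(g)$ would then follow by plugging this inequality into the definition of the pressure via periodic orbits, using $\sum_{|\i|=n}\mu_g(\I_\i) = 1$ and the distortion bounds to pass to the limit. Uniqueness would come from ergodicity of $\mu_g$, itself inherited from simplicity of the Perron eigenvalue (equivalently, from a spectral gap for $\mathcal{L}_g$ acting on $\mathcal{H}_\alpha$), combined with the two-sided Gibbs bound applied to any second $T$-invariant Gibbs measure for $g$.

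The main obstacle, as I see it, is the very first step: obtaining the Perron eigenvalue and positive eigenfunction for $\mathcal{L}_g$ in the non-compact setting. The classical finite-alphabet argument relies on compactness of $\mathcal{L}_g$ on a suitable cone of positive functions via a Schauder--Tychonoff fixed point theorem, but in our countable-alphabet situation $\mathcal{L}_g$ need not be compact on natural function spaces. One natural workaround is to truncate the alphabet to $\{1, \ldots, N\}$, apply the classical theory on the resulting compact invariant subset, and then take a weak-$\ast$ limit as $N \to \infty$; here the summability hypothesis (\ref{summable}) is genuinely essential, both to prevent loss of mass at infinity and to retain the H\"older regularity of the eigenfunction in the limit.
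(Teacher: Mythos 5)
The paper does not supply a proof of this proposition: it is imported directly from Mauldin and Urba\'nski \cite[Corollary 2.10]{murpf} (the same corollary that is restated as Proposition \ref{rpf}), so there is no in-paper argument to compare yours against. Your sketch reconstructs the standard Ruelle--Perron--Frobenius route for countable-branch systems, which is broadly what the cited reference carries out, and you correctly isolate the one genuinely non-routine point relative to the finite-alphabet case: loss of compactness of $\mathcal{L}_g$, so that Schauder--Tychonoff on a cone of positive functions no longer directly yields the Perron eigenpair. Your proposed fix (truncate the alphabet to $\{1,\dots,N\}$, apply finite-alphabet theory, and pass to a weak-$\ast$ limit, with summability (\ref{summable}) preventing escape of mass and keeping the H\"older constants of the truncated eigenfunctions uniform) is viable; Mauldin--Urba\'nski instead first produce the conformal eigenmeasure $\nu$ for $\mathcal{L}_g^{\ast}$ directly via a tightness/fixed-point argument driven by (\ref{summable}), and then build $h$ as a limit of Ces\`aro averages of $\lambda^{-n}\mathcal{L}_g^n\one$, but the mathematical content is the same and both routes lean on summability in the same way. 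The remaining steps you outline---$T$-invariance via the adjoint identity $\int\varphi\circ T\,d\mu_g=\lambda^{-1}\int\mathcal{L}_g(h\cdot\varphi\circ T)\,d\nu$, the two-sided Gibbs bound from iterating $\mathcal{L}_gh=\lambda h$ on a cylinder together with bounded distortion of $S_ng$ coming from (\ref{variations}), the identification $P=P(g)$ by summing the Gibbs bound over level-$n$ cylinders and comparing with the periodic-orbit definition of pressure, and uniqueness from equivalence of any two Gibbs measures plus ergodicity---are the standard ingredients and are correct in spirit; they would need only routine bookkeeping with the distortion constants to become a complete proof.
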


Gibbs measures have a useful characterisation via the Ruelle-Perron-Frobenius theorem, see \cite[Corollary 2.10]{murpf}.

\begin{prop}[Ruelle-Perron-Frobenius theorem] \label{rpf}
Let $g:[0,1] \to \R$ be a locally H\"older potential with $P(g)=0$ and let $\l_g: \mathcal{H} \to \mathcal{H}$ be the transfer operator given by
$$\l_g f(x)=\sum_{Ty=x} \exp(g(y))f(y).$$
Then there exists a unique (positive) eigenfunction $\l_gh=h$ and a unique eigenmeasure $\l_g^{\ast} \tilde{\mu}= \tilde{\mu}$, where $\l^{\ast}_g$ denotes the dual of $\l_g$. Moreover $\tilde{\mu}$ is a Gibbs measure for $g$. Let $\m_g: \mathcal{H} \to \mathcal{H}$ be the normalised operator defined by
$$\m_g f(x)=\frac{1}{h(x)}\sum_{Ty=x} h(y) \exp(g(y))f(y)$$
so that $\m_g \one =\one$. Then $\textup{d}\mu= h \textup{d} \tilde{\mu}$ is the unique $T$-invariant Gibbs measure for $g$ and $\m_g^{\ast} \mu=\mu$.
\end{prop}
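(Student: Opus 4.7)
The plan is to adapt the standard Ruelle--Perron--Frobenius machinery to the countable-alphabet setting, where non-compactness forces the summability hypothesis to carry much of the weight. First I would check that $\l_g$ is a well-defined bounded operator on $\H$: summability (\ref{summable}) together with boundedness of $f \in \H$ makes $\l_g f(x) = \sum_{n \in \N} \exp(g(T_n^{-1}x)) f(T_n^{-1}x)$ converge uniformly in $x$, and the uniform contraction of the inverse branches $T_n^{-1}$ of $T$ combined with the local H\"older bound (\ref{variations}) ensures that $\l_g f$ is again locally H\"older with comparable constants.

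To construct the eigenmeasure I would run a Schauder--Tychonoff fixed-point argument on the normalised dual map $\nu \mapsto \l_g^{\ast}\nu / \l_g^{\ast}\nu([0,1])$ acting on the weak-$\ast$ compact convex set of Borel probability measures on $[0,1]$; continuity of this map (again via summability) yields a fixed point $\tilde{\mu}$ with $\l_g^{\ast}\tilde{\mu} = \lambda \tilde{\mu}$, and the normalisation $P(g)=0$ forces $\lambda = 1$. The two-sided Gibbs bound (\ref{gibbs}) with $P=0$ then drops out of the identity $\tilde{\mu}(\I_{\i|_n}) = \int \l_g^n \one_{\I_{\i|_n}} \, d\tilde{\mu}$ combined with the bounded distortion estimate $|S_n g(x) - S_n g(y)| \leq C$ on cylinders $\I_{\i|_n}$, which is precisely where (\ref{variations}) is used.

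The eigenfunction is then obtained by studying the Cesaro averages $h_N := \frac{1}{N}\sum_{k=0}^{N-1} \l_g^k \one$. Using the Gibbs bound just obtained together with bounded distortion, one checks that $(h_N)$ is uniformly bounded and equicontinuous, so Arzel\`a--Ascoli extracts a subsequential limit $h \in \H$ satisfying $\l_g h = h$; the lower Gibbs bound forces $h$ to be strictly positive and bounded away from $0$. Setting $d\mu := h \, d\tilde{\mu}$, the pointwise identity $\l_g((f \circ T) \cdot h) = f \cdot \l_g h = f h$ combined with $\l_g^{\ast}\tilde{\mu}=\tilde{\mu}$ gives $\int f \circ T \, d\mu = \int f h \, d\tilde{\mu} = \int f \, d\mu$, i.e. $T$-invariance of $\mu$, which is dually equivalent to $\m_g^{\ast}\mu = \mu$. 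The Gibbs property of $\mu$ is inherited from $\tilde{\mu}$ since $h$ is bounded above and below.

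The main obstacle is the uniqueness of $h$ and $\tilde{\mu}$. For this I would invoke a Birkhoff cone-contraction argument applied to a suitable positive cone in $\H$: the normalised operator $\m_g$ strictly contracts the corresponding Hilbert projective metric, yielding exponential convergence $\m_g^n f \to \int f \, d\mu$ uniformly on compact sets for $f$ in a dense subclass of $\H$. Uniqueness of both the eigenfunction and the eigenmeasure then follows in the standard way. The delicate point in the countable-alphabet setting is to choose the cone compatibly with the unboundedness of the number of inverse branches, which is exactly the technical innovation of the Mauldin--Urbanski treatment being cited.
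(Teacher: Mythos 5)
The paper does not prove this proposition; it is quoted directly from Corollary~2.10 of Mauldin and Urba\'nski \cite{murpf}, so there is no in-paper argument to compare against. Your outline is a reasonable rendition of the standard construction and would, if completed carefully, reproduce the result, but three points need attention. First, the statement assumes only that $P(g)=0$ is finite, not summability in the sense of (\ref{summable}); you use (\ref{summable}) from the outset, so you should first derive it, which one can do via bounded distortion --- the periodic partition sums are supermultiplicative up to a constant, so divergence of the $n=1$ sum would force $P(g)=\infty$. Second, your assertion that the inverse branches $T_n^{-1}$ are uniformly contracting is false for the Gauss map: $T_1^{-1}$ has derivative tending to $-1$ at the origin, and only $T^2$ is uniformly expanding. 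The conclusion that $\l_g$ preserves $\H_{\alpha}$ survives, but for a symbolic rather than metric reason --- one bounds $\textnormal{var}_m(\l_g f)$ by $\textnormal{var}_{m+1}(g)$, $\textnormal{var}_{m+1}(f)$ and summability, using only the nesting of cylinders. The same subtlety recurs in your cone-contraction step for uniqueness: it is $\m_g^2$, not $\m_g$, that strictly contracts the projective metric, exactly as the paper arranges in Lemma~\ref{fixed pt}.

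Third, your Schauder--Tychonoff and Arzel\`a--Ascoli steps both exploit the compactness of $[0,1]$. That is legitimate here, but it is worth noting that the cited Mauldin--Urba\'nski theorem is proved on the non-compact shift $\N^{\N}$, where the simplex of probability measures is not weak-$\ast$ compact and the eigenmeasure and eigenfunction are constructed by different means (exhaustion by finite subalphabets and tightness estimates). So your route is not the one in \cite{murpf}, but for the interval model it is a sound and arguably more elementary alternative, provided the two local fixes above are made.
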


Given $u \in \mathcal{H}$ we call $u -u \circ T$ a \emph{coboundary}. We say that two locally H\"older functions $f, g: [0,1] \to \R$ are \emph{cohomologous} (denoted by $f \sim g$) if there exists some function $u \in \H$ such that
$$f=g+u-u\circ T.$$

\subsection{Regularity of $T$.} It is easy to check that for all $x \in [0,1]$, $|(T^2)^{\prime}(x)| \geq \frac{9}{4}$. That means that although $T$ is itself not uniformly expanding, the second iterate $T^2$ is. Since $T^{\prime}(x)= -x^{-2}$ and $T^{\prime\prime}(x)=2x^{-3}$ it follows easily that
\begin{eqnarray}
\sup_{n \in \N} \sup_{x, y, z \in \I_n} \left|\frac{T^{\prime\prime}(x)}{T^{\prime}(y)T^{\prime}(z)}\right| = 16.
\label{renyi1}
\end{eqnarray}
Consequently, one can use (\ref{renyi1}) to show that $-\log|T^{\prime}|$ is locally H\"older; in particular $-\log|T^{\prime}| \in \H_{\frac{2}{3}}$. Throughout the rest of the paper we fix $\alpha= \frac{2}{3}$. A consequence of the H\"older regularity of $-\log|T^{\prime}|$ is the following useful \emph{bounded distortion} property, see for instance \cite[\S 7.4 Lemma 2]{cfs}.

\begin{prop}[Bounded distortion property] \label{bd}
There exists some $C>0$ such that for all $n \in \N$, $i_1 \ldots i_n \in \Sigma^{\ast}$ and $x, y \in \I_{i_1 \ldots i_n}$,
$$C^{-1} \leq \frac{(T^n)^{\prime}(x)}{(T^n)^{\prime}(y)} \leq C.$$
In particular for any $x \in \I_{i_1 \ldots i_n}$,
$$C^{-1} \leq \frac{|(T^n)^{\prime}(x)|}{|\I_{i_1 \ldots i_n}|^{-1}} \leq C.$$
\end{prop}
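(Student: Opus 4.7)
The plan is to reduce the multiplicative distortion estimate to an additive one via logarithms, then invoke the local H\"older regularity of $\phi := -\log|T'|$ asserted just before the proposition, i.e. $\textnormal{var}_m(\phi) \le [\phi]_{2/3}(2/3)^m$. For any $x, y \in \I_{i_1 \ldots i_n}$ the chain rule gives the telescoping identity
$$\log \frac{|(T^n)'(x)|}{|(T^n)'(y)|} = -\sum_{k=0}^{n-1}\bigl(\phi(T^k x) - \phi(T^k y)\bigr),$$
and the key geometric observation is that $T^k x$ and $T^k y$ both lie in the cylinder $\I_{i_{k+1}\ldots i_n}$ of depth $n-k$. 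Hence each summand is controlled by $\textnormal{var}_{n-k}(\phi) \le [\phi]_{2/3}(2/3)^{n-k}$, and summing the resulting geometric series yields
$$\left|\log \frac{|(T^n)'(x)|}{|(T^n)'(y)|}\right| \le [\phi]_{2/3}\sum_{j=1}^{\infty}(2/3)^{j} = 2[\phi]_{2/3} =: \log C,$$
a bound independent of both $n$ and $i_1 \ldots i_n$; exponentiating gives the first inequality.

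For the ``in particular'' statement, I would use that $T^n$ is a $C^1$ diffeomorphism from $\I_{i_1 \ldots i_n}$ onto its image, and that every branch of the Gauss map is surjective onto $[0,1]$, so $|T^n(\I_{i_1 \ldots i_n})| = 1$. The mean value theorem then produces a point $\xi \in \I_{i_1 \ldots i_n}$ with $|(T^n)'(\xi)| = 1/|\I_{i_1 \ldots i_n}|$, and substituting $y = \xi$ into the first part yields the claim with the same constant.

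The argument itself is essentially routine once the H\"older regularity of $\phi$ is in hand. The only genuinely delicate ingredient, which the paper already signposts but does not unpack, is the derivation of that regularity from the R\'enyi-type bound (\ref{renyi1}): one applies the mean value theorem to $\phi$, bounds the resulting factor $|T''/T'|$ via (\ref{renyi1}), and combines this with the uniform expansion $|(T^2)'| \ge 9/4$ on Gauss cylinders. The latter forces $|\I_{i_1 \ldots i_n}|$ to decay like $(4/9)^{\lfloor n/2 \rfloor}$, which is precisely what fixes the H\"older exponent at $(4/9)^{1/2} = 2/3$.
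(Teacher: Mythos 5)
Your proof is correct and follows the route that the paper itself indicates: the paper derives the proposition from the local H\"older regularity of $-\log|T^{\prime}|$ (citing CFS rather than writing out the telescoping argument), and your telescoping-over-Birkhoff-sums computation together with the observation that $T^k x, T^k y \in \I_{i_{k+1}\ldots i_n}$ is exactly the standard proof of that implication. The mean value theorem step for the ``in particular'' clause, using that each branch of $T^n$ maps $\I_{i_1 \ldots i_n}$ onto an interval of full length, is likewise the standard reduction and is correct. One very small imprecision in your closing remarks: the R\'enyi bound~(\ref{renyi1}) controls $|T^{\prime\prime}/(T^{\prime})^2|$, not $|T^{\prime\prime}/T^{\prime}|$, so by itself it does not bound $\phi^{\prime}$ uniformly; the point is that on a depth-$n$ cylinder one pairs the factor $|T^{\prime\prime}(s)/T^{\prime}(s)| \le 16\,|T^{\prime}(s)|$ with the cylinder length, and the product $|T^{\prime}| \cdot |\I_{i_1\ldots i_n}|$ is controlled by the length of the depth-$(n-1)$ image cylinder via the mean value theorem, which decays like $(2/3)^n$ by the uniform expansion of $T^2$. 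You do gesture at this combination, so the sketch is substantively right, but it is the product estimate (not a direct bound on $|T^{\prime\prime}/T^{\prime}|$) that makes the exponent $2/3$ emerge.
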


\section{Measures that do not satisfy Hypothesis \ref{hyp1}} \label{tail}

In this section we show that there exists some $c, \epsilon>0$ such that if $\p$ does not satisfy Hypothesis \ref{hyp1} for this choice of $\epsilon$, then $\dim \mu_{\p}< 1-c$.

Given $\lambda>0$ recall that $\mathcal{E}_{\lambda}$ was defined to be
\begin{eqnarray} \label{e}
\mathcal{E}_{\lambda}= \bigcap_{j=1}^{\infty} \bigcup_{n=j}^{\infty} \left\{x \in (0,1): |J_n(x)| \leq \exp(-\lambda n)\right\}.
\end{eqnarray}

For $\frac{1}{2}<s<1$ we denote
\begin{eqnarray}
\kappa(s)= \log \left( \sup_{x \in (0,1)} \sum_{n \in \mathbb{N}} \frac{1}{|T^{\prime}(T_n^{-1}x)|^s} \right) < \infty.
\label{q}
\end{eqnarray}

By \cite[Theorem 4.1]{kpw}, for any $\lambda>0$ and $\frac{1}{2}<s<1$
\begin{eqnarray}
\dim \mathcal{E}_{\lambda} \leq s+\frac{\kappa(s)}{\lambda}.
\label{kpw bound}
\end{eqnarray}

We begin by using (\ref{kpw bound}) to show that any measure with infinite entropy (and therefore infinite Lyapunov exponent) will have dimension at most $\frac{1}{2}$.

\begin{lma}
Let $\mu_{\p}$ be a Bernoulli measure such that $h(\mu_{\p})= \infty$. Then
\[\dim \mu_{\p} \leq \frac{1}{2}.\]
\label{infinite thm}
\end{lma}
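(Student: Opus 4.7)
The plan is to use the Kifer--Peres--Weiss bound (\ref{kpw bound}) together with the observation that when $h(\mu_{\p}) = \infty$ the measure $\mu_{\p}$ must be concentrated on $\mathcal{E}_{\lambda}$ for \emph{every} $\lambda > 0$. Once this is in place, $\dim \mu_{\p} \le \dim \mathcal{E}_{\lambda}$, and sending $\lambda \to \infty$ followed by $s \downarrow 1/2$ in (\ref{kpw bound}) yields the stated bound.

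The first step is to verify that infinite entropy forces the Lyapunov exponent to be infinite. Applying Gibbs' inequality against the reference probability vector $q_n = \tfrac{6}{\pi^2} n^{-2}$ gives
\[
h(\mu_{\p}) \;=\; -\sum_n p_n \log p_n \;\le\; -\sum_n p_n \log q_n \;=\; 2\sum_n p_n \log n + \log(\pi^2/6),
\]
so $h(\mu_{\p}) = \infty$ forces $\sum_n p_n \log n = \infty$. Since $|T'(\Pi(\i))| \ge i_1^2$ for every $\i \in \Sigma$, this yields
\[
\chi(\mu_{\p}) \;=\; \int \log|T'(\Pi(\i))|\, dm_{\p}(\i) \;\ge\; 2\sum_n p_n \log n \;=\; \infty.
\]

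Next I would invoke the Birkhoff ergodic theorem. The measure $\mu_{\p}$ is $T$-invariant and ergodic, being the pushforward via the a.e.\ conjugacy $\Pi$ of the $\sigma$-ergodic Bernoulli measure $m_{\p}$. Since $\log|T'|$ is unbounded but non-negative on $[0,1]$ (as $|T'| \ge 1$), I would apply the ergodic theorem to the bounded truncations $\min(\log|T'|, M)$ and then let $M \to \infty$ via monotone convergence to conclude that $\tfrac{1}{n} S_n \log|T'|(x) \to \infty$ for $\mu_{\p}$-a.e.\ $x$. The bounded distortion estimate of Proposition \ref{bd} then gives $\tfrac{1}{n} \log|J_n(x)|^{-1} \to \infty$ for $\mu_{\p}$-a.e.\ $x$, so that for every fixed $\lambda > 0$ we have $|J_n(x)| \le e^{-\lambda n}$ for all sufficiently large $n$. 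In particular $\mu_{\p}(\mathcal{E}_{\lambda}) = 1$ for every $\lambda > 0$.

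Combining these observations, $\dim \mu_{\p} \le \dim \mathcal{E}_{\lambda} \le s + \kappa(s)/\lambda$ for every $s \in (1/2, 1)$ and every $\lambda > 0$; letting $\lambda \to \infty$ and then $s \downarrow 1/2$ gives $\dim \mu_{\p} \le 1/2$. I do not expect a serious obstacle here; the only small subtlety is the unboundedness of $\log|T'|$ when passing from the divergence of $\sum p_n \log n$ to the divergence of Birkhoff averages, and this is handled cleanly by the truncation step above.
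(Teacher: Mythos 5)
Your argument is correct and follows essentially the same route as the paper: show $\chi(\mu_{\p})=\infty$, conclude via the Birkhoff theorem and bounded distortion that $\mu_{\p}(\mathcal{E}_{\lambda})=1$ for every $\lambda>0$, and then apply the Kifer--Peres--Weiss bound (\ref{kpw bound}) with $\lambda\to\infty$ and $s\downarrow\frac12$. You simply supply two details the paper leaves implicit --- the deduction $h(\mu_{\p})=\infty\Rightarrow\chi(\mu_{\p})=\infty$ (the paper tacitly uses the variational principle (\ref{vp}), whereas you give a direct Gibbs-inequality computation) and the truncation step needed to apply the ergodic theorem to the non-integrable function $\log|T'|$ --- both of which are correct and harmless.
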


\begin{proof}
Let $h(\mu_{\p})= \infty$. Then $\chi(\mu_{\p})=\infty$. Thus for $\mu_{\p}$ almost every $x$,
$$\liminf_{n \to \infty} \frac{1}{n} \sum_{k=0}^{n-1} \log |T^{\prime}(T^k(x))| = \infty.$$
Fix $\lambda>0$. Then for $\mu_{\p}$ almost every $x$
\begin{eqnarray}
\frac{1}{n} \sum_{k=0}^{n-1} \log (T^{\prime}(T^k(x))) > 2\lambda
\label{lambda}
\end{eqnarray}
for all $n$ sufficiently large. By rearranging (\ref{lambda}) we obtain that for all $x$ that satisfy (\ref{lambda}), there exists a subsequence $n_k$ such that
$$|(T^{n_k})^{\prime}(x)|^{-1} < \exp(-2\lambda n_k)$$
for all $k \in \N$. By Proposition \ref{bd} this implies that
$$|J_{n_k}(x)| \leq C|(T^{n_k})^{\prime}(x)|^{-1} \leq C\exp(-2\lambda n_k) \leq \exp(-\lambda n_k)$$
along the subsequence $n_k$, provided $\lambda$ is sufficiently large. Therefore $x \in \mathcal{E}_{\lambda}$ which implies that $\mu_{\p}(\mathcal{E}_{\lambda})=1$ since we were considering $x$ that belong to a set of full measure.

Let $\frac{1}{2}< s<1$. By (\ref{kpw bound}), $\dim \mathcal{E}_{\lambda} \leq s+ \frac{\kappa(s)}{\lambda}$. Since $\mu_{\p}(\mathcal{E}_{\lambda})=1$ for all $\lambda$, it follows that $\dim \mu_{\p} \leq s+ \frac{\kappa(s)}{\lambda}$ where $\kappa(s)$ is given by (\ref{q}). Since $s$ can be chosen arbitrarily close to $\frac{1}{2}$ and $\lambda$ can be chosen to be arbitrarily large, the result follows.
\end{proof}

We can use similar ideas to consider measures with finite entropy whose associated probability vectors do not satisfy Hypothesis \ref{hyp1}.

\begin{lma}
Fix $\frac{1}{2}<s_0<1$. Let $\lambda_0>0$ such that $s_0+\frac{\kappa(s_0)}{\lambda_0}<1$. Then there exists $\epsilon_0>0$ such that if $h(\mu_{\p})< \infty$ and $\p$ does not satisfy Hypothesis \ref{hyp1} for $\epsilon=\epsilon_0$ then
$$\dim \mu_{\p} \leq s_0+\frac{\kappa(s_0)}{\lambda_0}.$$
\label{tail lemma}
\end{lma}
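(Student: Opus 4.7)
The plan is to reduce the claim to showing $\mu_{\p}(\mathcal{E}_{\lambda_0}) = 1$, after which $\dim \mu_{\p} \leq \dim \mathcal{E}_{\lambda_0} \leq s_0 + \kappa(s_0)/\lambda_0$ follows directly from the bound (\ref{kpw bound}). In turn, exactly as in the proof of Lemma \ref{infinite thm}, it is enough to force the Lyapunov exponent $\chi(\mu_{\p})$ to exceed $2\lambda_0$: since $\mu_{\p}$ is ergodic (being the pushforward of the Bernoulli measure $m_{\p}$ under the factor map $\Pi$), Birkhoff's ergodic theorem applied to the locally Hölder potential $\log|T'|$ implies that for $\mu_{\p}$-a.e.\ $x$ the Birkhoff averages $\frac{1}{n} S_n \log|T'|(x)$ converge to $\chi(\mu_{\p}) > 2\lambda_0$; then bounded distortion (Proposition \ref{bd}) gives $|J_n(x)| \leq C \exp(-2\lambda_0 n) \leq \exp(-\lambda_0 n)$ for all $n$ large, so $x \in \mathcal{E}_{\lambda_0}$.

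The substantive step is therefore to show that when $\p$ fails Hypothesis \ref{hyp1} with $\epsilon = \epsilon_0$, the Lyapunov exponent $\chi(\mu_{\p})$ can be made as large as we wish by choosing $\epsilon_0$ small. Using $T'(x) = -x^{-2}$ and $\I_n = [1/(n+1), 1/n)$, we have $\log|T'(x)| \geq 2\log n$ on $\I_n$, and $\mu_{\p}(\I_n) = p_n$, so
\[
\chi(\mu_{\p}) \;\geq\; 2\sum_{n \in \N} p_n \log n.
\]
Since the $p_n$ are decreasing and we may take $\epsilon_0 < \psi$, the negation of Hypothesis \ref{hyp1} splits into two cases: (i) $p_1 \leq \epsilon_0$, in which case $p_n \leq \epsilon_0$ for every $n$; or (ii) $\epsilon_0 < p_1 \leq \psi$ and $p_n \leq p_2 \leq \epsilon_0$ for every $n \geq 2$. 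In case (i), taking $N = \lfloor 1/(2\epsilon_0) \rfloor$ gives $\sum_{n \leq N} p_n \leq N\epsilon_0 \leq 1/2$ and hence $\sum_n p_n \log n \geq \tfrac{1}{2} \log N$. In case (ii), taking $N$ so that $(N-1)\epsilon_0 \leq (1-\psi)/2$ gives $\sum_{n \leq N} p_n \leq (1+\psi)/2$ and hence $\sum_n p_n \log n \geq \tfrac{1-\psi}{2}\log N$. Both lower bounds tend to $+\infty$ as $\epsilon_0 \to 0$ uniformly in $\p$, so fixing $\epsilon_0 = \epsilon_0(\lambda_0,\psi) > 0$ small enough ensures $\chi(\mu_{\p}) > 2\lambda_0$ for every such $\p$.

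The only real obstacle is the bookkeeping in the two cases: the conceptual point is that when Hypothesis \ref{hyp1} fails, the decreasing arrangement together with $\sum p_n = 1$ forces a definite amount of mass onto digits $n \gtrsim 1/\epsilon_0$, i.e.\ onto a neighbourhood of $0$ where $\log|T'|$ is correspondingly large, and this drives $\chi(\mu_{\p})$ upward. Once the lower bound on $\chi(\mu_{\p})$ is in place the ergodic-theoretic and dimension-theoretic steps mimic the proof of Lemma \ref{infinite thm} verbatim, delivering the stated bound on $\dim \mu_{\p}$.
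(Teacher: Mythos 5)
Your argument is correct and follows essentially the same route as the paper: show that failure of Hypothesis \ref{hyp1} (together with decreasing weights) forces $\chi(\mu_{\p}) > 2\lambda_0$, and then, exactly as in Lemma \ref{infinite thm}, conclude $\mu_{\p}(\mathcal{E}_{\lambda_0})=1$ and hence $\dim\mu_{\p}\leq s_0+\kappa(s_0)/\lambda_0$. The paper avoids your two-case split by noting that the negation of Hypothesis \ref{hyp1} always yields both $p_1\leq\psi$ and $p_n\leq\epsilon_0$ for $n\geq 2$, but this is a cosmetic difference and both calculations deliver the same lower bound on the Lyapunov exponent.
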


\begin{proof}
Fix $\lambda_0$ sufficiently large that $s_0+\frac{\kappa(s_0)}{\lambda_0}<1$. Fix $N$ sufficiently large that 
$$\frac{1-\psi}{2} \inf_{x \in \I_N} \log |T^{\prime}(x)|> 2\lambda_0,$$ where $\psi$ was defined in (\ref{psi}). Fix $\epsilon_0$ sufficiently small that $\epsilon_0< \frac{1-\psi}{2N}$. Since the $p_n$ are decreasing it follows that $\sum_{n=N+2}^{\infty} p_n \geq 1-\psi-N \epsilon_0$. Thus since $\epsilon_0< \frac{1-\psi}{2N}$,
$$\int \log|T^{\prime}|d\mu_{\p} \geq \frac{1-\psi}{2} \inf_{x \in \I_N} \log |T^{\prime}(x)|> 2\lambda_0.$$
As in the proof of Lemma \ref{infinite thm} this implies that $\mu_{\p}$ almost every $x$ belongs to $\mathcal{E}_{\lambda_0}$ and therefore $\dim \mu_{\p} \leq s_0+\frac{\kappa(s_0)}{\lambda_0}$. 
\end{proof}

\section{Measures that satisfy Hypothesis \ref{hyp1}} \label{top}

Throughout this section we fix $\epsilon=\epsilon_0$ given by Lemma \ref{tail lemma} and we fix a probability vector $\p$ that satisfies the following hypothesis.

\begin{hyp} \label{hyp} The probability vector $\p$ satisfies that $\dim \mu_{\p}> \frac{3}{4}$ and additionally either
\begin{itemize}
 \item[(a)]  $p_1, p_2 \geq \epsilon$ or
 \item[(b)] $p_1> \psi$.
 \end{itemize}
\end{hyp}

If $\p$ satisfies Hypothesis \ref{hyp} we may also say that $\mu_{\p}$ satisfies Hypothesis \ref{hyp}. Note that by Lemma \ref{infinite thm}, $\dim \mu_{\p}> \frac{3}{4}$ implies that $h(\mu_{\p})< \infty$ and so Hypothesis \ref{hyp} is slightly stronger than Hypothesis \ref{hyp1} (in particular, if $\p$ satisfies Hypothesis \ref{hyp1} then either $\dim \mu_{\p} \leq \frac{3}{4}$ or $\p$ satisfies Hypothesis \ref{hyp}). Also, since $h(\mu_{\p})< \infty$ we have $\dim \mu_{\p}= \frac{h(\mu_{\p})}{\chi(\mu_{\p})}$. To make our arguments clearer we also assume that $p_n >0$ for all $n$, although the proof could be easily adapted without this extra assumption.

The main result in this section is that we can obtain a uniform upper bound on the dimension of any measure $\mu_{\p}$ whose probability vector satisfies Hypothesis \ref{hyp}.

\begin{lma}
There exists $\eta_1>0$ such that for any $\mu_{\p}$ that satisfies Hypothesis \ref{hyp},
$$\dim \mu_{\p} \leq 1-\eta_1.$$
\label{top bound}
\end{lma}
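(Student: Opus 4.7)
The plan is to introduce the natural Bowen-type pressure function associated with $\mu_{\p}$, reduce the bound $\dim\mu_{\p}\le 1-\eta_1$ to a uniform positivity statement for its second derivative, and then identify the latter with a lower bound on an asymptotic variance. Define $\beta_{\p}:[0,1]\to\R$ implicitly by
\begin{equation*}
P\bigl(t\log p_{\cdot}-\beta_{\p}(t)\log|T^{\prime}|\bigr)=0,
\end{equation*}
where $\log p_{\cdot}$ denotes the locally constant potential $\i\mapsto \log p_{i_1}$. The potentials appearing here are locally H\"older and their summability on $[0,1]$ follows from the tail assumption $p_n=O(n^{-2})$ combined with the a priori convex bounds on $\beta_{\p}$, so Propositions \ref{exist} and \ref{rpf} apply. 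Direct computations then give $\beta_{\p}(0)=1$ by Bowen's formula, $\beta_{\p}(1)=0$ since $\sum p_n=1$ implies $P(\log p_{\cdot})=0$, and, by implicit differentiation using $\nu_1=\mu_{\p}$,
\begin{equation*}
\beta_{\p}^{\prime}(1)=\frac{\int\log p_{\cdot}\,\textup{d}\mu_{\p}}{\int\log|T^{\prime}|\,\textup{d}\mu_{\p}}=-\frac{h(\mu_{\p})}{\chi(\mu_{\p})}=-\dim\mu_{\p}.
\end{equation*}

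Since $\beta_{\p}$ is convex (inherited from convexity of the pressure in the perturbing direction), writing $1-\dim\mu_{\p}=\beta_{\p}^{\prime}(1)-(\beta_{\p}(1)-\beta_{\p}(0))$ and exchanging the order of integration yields
\begin{equation*}
1-\dim\mu_{\p}=\int_0^1 s\,\beta_{\p}^{\prime\prime}(s)\,\textup{d}s,
\end{equation*}
so it suffices to produce constants $0<a<b<1$ and $c_0>0$, independent of $\p$ satisfying Hypothesis \ref{hyp}, with $\beta_{\p}^{\prime\prime}(s)\ge c_0$ on $[a,b]$; then $\eta_1:=c_0(b^2-a^2)/2$ works. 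Differentiating the pressure equation twice and using the standard identification of second derivatives of pressure with the asymptotic variance of the perturbing direction under the equilibrium state $\nu_t$ gives
\begin{equation*}
\beta_{\p}^{\prime\prime}(t)=\frac{\sigma_{\nu_t}^2\bigl(\log p_{\cdot}-\beta_{\p}^{\prime}(t)\log|T^{\prime}|\bigr)}{\chi(\nu_t)}.
\end{equation*}
A uniform upper bound on $\chi(\nu_t)$ for $t$ in a compact subinterval of $(0,1)$ and $\p$ satisfying Hypothesis \ref{hyp} is straightforward from Gibbs estimates and the tail of $\p$, so the lemma reduces to a uniform \emph{lower} bound $\inf\sigma_{\nu_t}^2\bigl(\log p_{\cdot}-\beta_{\p}^{\prime}(t)\log|T^{\prime}|\bigr)>0$ over this parameter range.

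The hard part is this final step, and it is where Hypothesis \ref{hyp} enters decisively. The asymptotic variance vanishes precisely when $\log p_{\cdot}-\beta_{\p}^{\prime}(t)\log|T^{\prime}|$ is cohomologous to a constant, so by a Livsic-type argument one needs to exhibit two periodic orbits whose Birkhoff averages of this potential differ in a quantitatively controlled way. Evaluating at the fixed points $z_n\in\I_n$ reduces this to showing that $\log p_n-\beta_{\p}^{\prime}(t)\log|T^{\prime}(z_n)|$ is not constant in $n$; the Parry-like profile $p_n\propto|T^{\prime}(z_n)|^{\beta_{\p}^{\prime}(t)}$ that would make it constant is precisely what Hypothesis \ref{hyp} excludes, since under (a) both $p_1$ and $p_2$ are bounded below by $\epsilon$ while under (b) $p_1$ exceeds $\psi$, forcing a definite discrepancy when compared with $p_n$ for large $n$ (which remain of order $n^{-2}$). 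I expect the principal obstacle to be converting this qualitative obstruction into a quantitative variance lower bound that is uniform in $(\p,t)$; the plan is to compare Birkhoff sums over the short periodic orbits $z_1,z_2$ with those over $z_n$ for large $n$, and to promote this pointwise discrepancy to an $L^2$ lower bound by invoking the spectral gap of the normalised transfer operator $\m_{g_t}$ of Proposition \ref{rpf} together with the Green--Kubo representation of the asymptotic variance.
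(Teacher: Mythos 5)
Your framework coincides with the paper's: define $\beta_{\p}$ via the pressure equation, identify $\dim\mu_{\p}=|\beta_{\p}^{\prime}(1)|$, integrate to reduce the dimension gap to a uniform positive lower bound for $\beta_{\p}^{\prime\prime}$ on a compact $t$-interval, and recognise $\beta_{\p}^{\prime\prime}(t)$ as an asymptotic variance over a Lyapunov exponent. Up to that reduction your proposal and the paper run in parallel. But the crucial remaining step --- a uniform lower bound on the variance --- is where your sketch is both vague and, in one respect, misdirected.

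First, the mechanism. You write that the obstruction to $\sigma^2_{\nu_t}$ vanishing is that a ``Parry-like profile'' $p_n\propto|T^{\prime}(z_n)|^{\beta_{\p}^{\prime}(t)}$ is ``precisely what Hypothesis \ref{hyp} excludes.'' That is not the role Hypothesis \ref{hyp} plays. Cohomological triviality of $f_{\p,t}=-\beta_{\p}^{\prime}(t)\log|T^{\prime}|+f_{\p}$ is already impossible for \emph{every} probability vector with $t>0$, by the nonlinearity of $T$: imposing $S_1f_{\p,t}(z_1)=S_1f_{\p,t}(z_2)=0$ and $S_2f_{\p,t}(z_{12})=0$ simultaneously forces $\beta_{\p}^{\prime}(t)\log\frac{T^{\prime}(z_1)T^{\prime}(z_2)}{T^{\prime}(z_{12})T^{\prime}(z_{21})}=0$, which fails since $\beta_{\p}^{\prime}(t)\le -3/4$ and $T^{\prime}(z_1)T^{\prime}(z_2)\ne T^{\prime}(z_{12})T^{\prime}(z_{21})$. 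This is the paper's Lemma \ref{lemma periodic}: for any $\p,t$ one of $z_1,z_2,z_{12}$ has $\frac{1}{2}|S_2f_{\p,t}|\ge c_1$, with $c_1$ depending only on the map. Your suggested comparison of $z_1,z_2$ against $z_n$ for \emph{large} $n$ would put the ``witness'' in a cylinder $\I_{n\ldots n}$ whose $\mu_{\p,t}$-mass is not controllable from below, so you could not convert that pointwise discrepancy into a variance bound. The actual role of Hypothesis \ref{hyp} is exactly to guarantee that the short cylinders $\I_{1^m},\I_{2^m},\I_{(12)^{m/2}}$ containing the witnessing periodic point have $\mu_{\p,t}$-mass bounded below by $c_3^{-1}\epsilon^{m t}9^{-m}$ (Lemma \ref{measure lemma}).

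Second, the passage from periodic-orbit data to a variance lower bound. ``Green--Kubo plus spectral gap'' is not, by itself, a lower bound: the correlation terms $\int f\cdot f\circ T^n\,d\mu$ can be negative, and an upper bound on $|{\m^n}|$ only controls them from above. The paper's resolution is the explicit coboundary $U_{\p,t}=\sum_{n\ge1}\m_{\p,t}^n f_{\p,t}$, chosen so that $\m_{\p,t}\tilde{f}_{\p,t}=0$ and hence $\sigma^2_{\p,t}(f_{\p,t})=\int\tilde{f}^2_{\p,t}\,d\mu_{\p,t}$ with no correlation tail at all (Lemma \ref{coboundary good}, Corollary \ref{rewrite2}). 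Then uniformity requires a $\p,t$-uniform H\"older seminorm bound $[\tilde{f}_{\p,t}]_\alpha\le c_2$; this is nontrivial and the paper obtains it by a Hilbert--Birkhoff cone argument giving a uniform spectral gap for $\m_{\p,t}$ (Lemmas \ref{fixed pt}--\ref{e1e2}, \ref{lemma r}). Your proposal gestures at the spectral gap but does not isolate the coboundary construction that eliminates the negative correlation terms, nor does it supply the uniform H\"older control that is needed to pass from the periodic point where $\tilde{f}_{\p,t}$ is large to an interval of positive mass. These are the genuinely substantive pieces of the argument, and they are missing.
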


The method used in this section is based on an approach which was proposed by Kesseb\"ohmer, Stratmann and Urba\'nksi and was outlined in a talk given by Kesseb\"ohmer in \cite{kess}.

For a fixed probability vector $\p$ define the Bernoulli potential $f_{\p} : [0,1] \setminus \Q \to (-\infty, 0]$ by
$$f_{\p}= \sum_{n \in \N} \log p_n \one_{\I_n}.$$

Notice that $f_{\p}$ is the Gibbs potential for the Bernoulli measure $\mu_{\p}$. We are now ready to introduce the function $\beta_{\p}$. 

\begin{defn}
Fix a probability vector $\p$ that satisfies Hypothesis \ref{hyp}. We can define the function $\beta_{\p}: [0,1] \to [0,1]$ where $\beta_{\p}(t)$ is defined implicitly as the solution to
\begin{eqnarray} P(-\beta_{\p}(t)\log |T^{\prime}|+tf_{\p})=0.
\label{bp}
\end{eqnarray}
\end{defn}

Note that it is not immediately obvious that $\beta_{\p}$ should be well-defined; this fact will follow from Proposition \ref{bp properties}. 

\begin{figure}
	\centering
	\includegraphics[width=80mm]{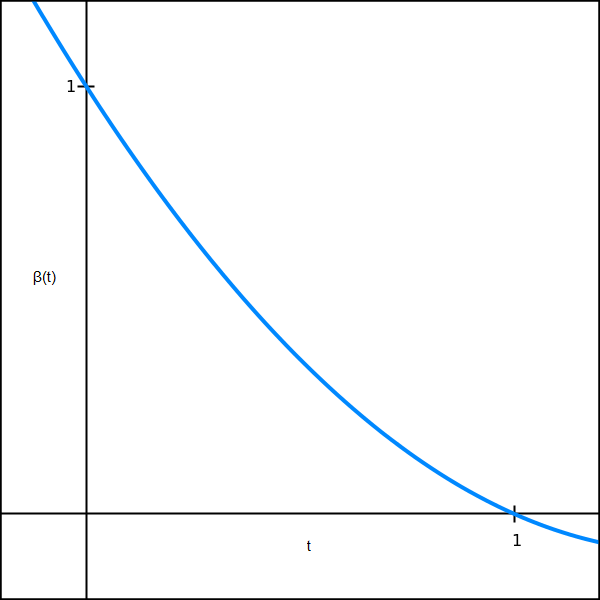}
	\caption{A typical graph of $\beta_{\p}(t)$.}
	\label{beta}
\end{figure}

We denote the function that appears inside the pressure in (\ref{bp}) by $g_{\p,t}: [0,1] \setminus \Q \to \R$
\begin{eqnarray}
g_{\p,t}= -\beta_{\p}(t)\log |T^{\prime}| + tf_{\p} .
\label{gpt}
\end{eqnarray}
By Proposition \ref{gibbs} we know that there exists a unique invariant Gibbs measure for $g_{\p,t}$ which we will denote by $\mu_{\p,t}$. 

The function $\beta_{\p}$ will be the object of our focus throughout this section. In the following proposition we summarise its important properties.

\begin{prop} \label{bp properties}
The function $\beta_{\p}: [0,1] \to [0,1]$ satisfies the following properties:
\begin{enumerate}
\item $\beta_{\p}(t)$ is convex and decreasing on $[0,1]$.
\item $\beta_{\p}(0) = 1$ and $\beta_{\p}(1)=0$.\item $\beta_{\p}(t)$ is analytic for $t \in [0,1]$. Moreover the first derivative of $\beta_{\p}$ (with respect to $t$) is given by
\begin{eqnarray}
\beta_{\p}^{\prime}(t)= \frac{-\int f_{\p}d\mu_{\p,t}}{\int \log |T^{\prime}|d\mu_{\p,t}}
\label{beta1}
\end{eqnarray}
(so in particular $\dim \mu_{\p}= |\beta_{\p}^{\prime}(1)|$)
and the second derivative is given by
\begin{eqnarray}
\beta_{\p}^{\prime\prime}(t) = \frac{\sigma^2_{\mu_{\p,t}}(-\beta_{\p}^{\prime}(t)\log |T^{\prime}|+f_{\p})}{\int \log|T^{\prime}|d\mu_{\p,t}}
\label{beta2}
\end{eqnarray}
where the variance $\sigma_{\mu_{\p,t}}^2(-\beta_{\p}^{\prime}(t)\log |T^{\prime}|+f_{\p})$ is given by
\begin{eqnarray} \label{variance}
\sigma^2_{\mu_{\p,t}}(f_{\p,t})= \int f_{\p,t}^2\textup{d}\mu_{\p,t}+ 2\sum_{n=1}^{\infty}\left(\int f_{\p,t} \cdot f_{\p,t} \circ T^n \textup{d}\mu_{\p,t}\right) .
\end{eqnarray}
\end{enumerate}
Moreover, these properties determine the graph of $\beta_{\p}(t)$; see Figure \ref{beta}.
\end{prop}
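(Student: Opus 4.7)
My plan is to derive all three items from standard properties of the pressure functional in the thermodynamic formalism of countable Markov shifts (as developed by Mauldin--Urba\'nski and Sarig), applied via the analytic implicit function theorem to
\[
\Phi(s,t) := P\bigl(-s\log|T'| + tf_{\p}\bigr),
\]
so that $\beta_{\p}(t)$ is by definition the unique root of $\Phi(\cdot,t) = 0$ in $[0,1]$. Once summability has been verified I will take for granted that $\Phi$ is jointly convex and real-analytic in $(s,t)$ on the relevant region, with first partial derivatives given by the standard formula $\partial_s \Phi = -\int \log|T'|\,d\mu$ and $\partial_t \Phi = \int f_{\p}\,d\mu$, where $\mu$ denotes the equilibrium Gibbs state for the potential $-s\log|T'|+tf_{\p}$.

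For well-definedness and the boundary values, $s \mapsto \Phi(s,t)$ is strictly decreasing in $s$ (since $\partial_s \Phi < 0$), so has at most one root. The variational principle (\ref{vp}) gives $\Phi(1,0) = P(-\log|T'|) = 0$ and hence $\beta_{\p}(0) = 1$. For $\beta_{\p}(1) = 0$, the cylinder formula $\mu_{\p}(\I_{i_1\ldots i_n}) = p_{i_1}\cdots p_{i_n} = \exp(S_n f_{\p}(x))$ exhibits $\mu_{\p}$ as a $T$-invariant Gibbs measure for $f_{\p}$ with normalising constant $0$, so uniqueness in Proposition~\ref{exist} forces $\Phi(0,1) = P(f_{\p}) = 0$. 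Existence of a root in $[0,1]$ for intermediate $t$ then follows from monotonicity of $P$ in its potential and the intermediate value theorem. Monotonicity of $\beta_{\p}$ is read off from the sign of $\beta_{\p}'(t) = -\partial_t\Phi/\partial_s\Phi$ (since $f_{\p} \leq 0$), while convexity follows because the sublevel set $\{\Phi \leq 0\}$ is convex and $\beta_{\p}$ is the graph of its strictly monotone upper boundary.

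Analyticity of $\beta_{\p}$ is now a direct application of the analytic implicit function theorem, using $\partial_s \Phi \neq 0$. Implicit differentiation of $\Phi(\beta_{\p}(t),t) \equiv 0$ produces (\ref{beta1}); at $t = 1$ the numerator specialises to $-\int f_{\p}\,d\mu_{\p} = h(\mu_{\p})$ and the denominator to $\chi(\mu_{\p})$, recovering $|\beta_{\p}'(1)| = \dim\mu_{\p}$. Differentiating once more and applying the second-variation identity for pressure along an analytic family of potentials,
\[
\frac{d^2}{dt^2}P(g_t) = \sigma^2_{\mu_{g_t}}(\partial_t g_t) + \int \partial_t^2 g_t\,d\mu_{g_t},
\]
with $g_t = g_{\p,t}$ (so that $\partial_t g_{\p,t} = -\beta_{\p}'(t)\log|T'| + f_{\p}$ and $\partial_t^2 g_{\p,t} = -\beta_{\p}''(t)\log|T'|$), and setting the left-hand side to zero, yields (\ref{beta2}) after rearrangement.

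The main obstacle is purely technical: I must check that the thermodynamic machinery (analyticity of $P$, existence and uniqueness of the Gibbs state $\mu_{\p,t}$, the first- and second-variation formulas, and the series representation (\ref{variance}) of the asymptotic variance via exponentially decaying correlations) applies throughout the family $\{g_{\p,t}\}_{t \in [0,1]}$. Local H\"older regularity of $g_{\p,t}$ is immediate from (\ref{renyi1}) together with the fact that $f_{\p}$ is constant on level-one cylinders, but summability (\ref{summable}) reduces to convergence of a series of the form $\sum_n p_n^t / n^{2\beta_{\p}(t)}$, and bounding this will require combining the standing assumption $p_n = O(1/n^2)$ with an a priori lower bound on $t + \beta_{\p}(t)$ over $t \in [0,1]$. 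Once these ingredients are in place, the remaining steps are structural applications of Propositions~\ref{exist} and~\ref{rpf} and their standard consequences.
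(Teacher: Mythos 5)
Your overall strategy (applying the analytic implicit function theorem to $\Phi(s,t)=P(-s\log|T'|+tf_{\p})$, reading monotonicity and convexity off the joint properties of $\Phi$, and deriving the derivative formulas by implicit differentiation together with the first- and second-variation formulas for pressure) is the same as the paper's, and your derivations of items (1), (2) and of the two derivative formulas are correct modulo the analyticity you defer.

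The genuine gap is precisely the point you flag but do not resolve: finiteness (and hence analyticity) of $P(-b\log|T'|+uf_{\p})$ near $(\beta_{\p}(t),t)$ for \emph{all} $t\in[0,1]$, which reduces to the a priori bound $\beta_{\p}(t)+t>\tfrac12$. You call this ``purely technical'', but it is not: it requires a bootstrap that uses the standing Hypothesis~\ref{hyp} in an essential way. The argument is: first establish analyticity only in a neighbourhood of $t=1$, where summability is automatic because $\beta_{\p}(1)+1=1>\tfrac12$; this already yields the formula $\beta_{\p}'(1)=-\dim\mu_{\p}$. Only then can one invoke convexity to write $\beta_{\p}(t)\ge|\beta_{\p}'(1)|(1-t)$, so $\beta_{\p}(t)+t\ge\dim\mu_{\p}+t(1-\dim\mu_{\p})\ge\dim\mu_{\p}$, and Hypothesis~\ref{hyp} (which demands $\dim\mu_{\p}>\tfrac34>\tfrac12$) then closes the loop and lets one extend analyticity to the whole of $[0,1]$. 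Your sketch never invokes the dimension lower bound at all, and without it the asserted a priori bound on $t+\beta_{\p}(t)$ is simply not available; so as written the proof of item (3) does not go through. Once you insert this bootstrap --- local analyticity at $t=1$, then the derivative identity, then convexity plus Hypothesis~\ref{hyp}, then global analyticity --- the rest of your argument is sound and parallels the paper's.
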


\begin{proof}
It is easy to see that $\beta_{\p}$ is decreasing, since
$$P(-\beta_{\p}(t)\log|T^{\prime}|+tf_{\p})= \lim_{n \to \infty} \frac{1}{n} \log \left( \sum_{i_1,\ldots i_n \in \N^n}\frac{(p_{i_1} \dots p_{i_n})^t}{|(T^n)^{\prime}(\Pi((i_1 \ldots i_n)^{\infty}))|^{\beta_{\p}(t)}}\right).$$
To see that $\beta_{\p}$ is convex, notice that for any $n \in \mathbb{N}$, and $a, u, t \in (0,1)$
\begin{eqnarray*}
\sum_{i_1,\ldots, i_n \in \mathbb{N}^n} \frac{p_{i_1}^{at}\ldots p_{i_n}^{at}}{|(T^n)^{\prime}(\Pi((i_1\ldots i_n)^{\infty}))|^{a\beta_{\p}(t)}}\frac{p_{i_1}^{(1-a)u}\ldots p_{i_n}^{(1-a)u}}{|(T^n)^{\prime}(\Pi((i_1\ldots i_n)^{\infty}))|^{(1-a)\beta_{\p}(u)}} \leq \\
\left(\sum_{i_1,\ldots, i_n \in \mathbb{N}^n} \frac{p_{i_1}^{t}\ldots p_{i_n}^{t}}{|(T^n)^{\prime}(\Pi((i_1\ldots i_n)^{\infty}))|^{\beta_{\p}(t)}}\right)^a\left(\sum_{i_1,\ldots, i_n \in \mathbb{N}^n} \frac{p_{i_1}^{u}\ldots p_{i_n}^{u}}{|(T^n)^{\prime}(\Pi((i_1\ldots i_n)^{\infty}))|^{\beta_{\p}(u)}}\right)^{1-a}
\end{eqnarray*}
by H\"older's inequality. Therefore
\begin{align*}
P(-(a\beta_{\p}(t)&+(1-a)\beta_{\p}(u))\log|T^{\prime}|+(at+(1-a)u)f_{\p}) \\
&\leq aP(-\beta_{\p}(t)\log |T^{\prime}|+tf_{\p}) +(1-a)P(-\beta_{\p}(u)\log |T^{\prime}|+uf_{\p})=0.
\end{align*}
Therefore it follows that $\beta_{\p}(at+(1-a)u) \leq a\beta_{\p}(t)+(1-a)\beta_{\p}(u)$ since when $t$ is fixed, $P(-b\log |T^{\prime}|+tf_{\p})$ is decreasing in $b$. 

For the second part, by using Proposition \ref{bd} it is easy to see that $P(-\log |T^{\prime}|)=0$ which implies that $\beta_{\p}(0)=1$. Similarly it is easy to see that $P(f_{\p})=0$ thus it follows that $\beta_{\p}(1)=0$. 

To prove the third part, we begin by showing that $\beta_{\p}$ is analytic in a neighbourhood of 1. Let $r< \frac{1}{2}$. By Proposition \ref{bd} and the fact that $p_n =O( \frac{1}{n^2})$, $$\sum_{n \in \N} p_n^{1-r} =O \left(\sum_{n \in \N} \frac{1}{n^{2(1-r)}}\right)$$ 
and therefore is a finite sum.
Let $(t, b) \in [1-\frac{r}{2}, 1+\frac{r}{2}] \times [-\frac{r}{2}, \frac{r}{2}]$. Then since $p_n=O(\frac{1}{n^2})$ there exists a constant $K>0$ such that 
\begin{align*}
P(-b\log &|T^{\prime}|+tf_{\p}) \\
 &\leq \lim_{n \to \infty} \frac{1}{n} \log \left( \sum_{i_1\ldots i_n \in \N^n} (p_{i_1}\ldots p_{i_n})^{1-\frac{r}{2}} |(T^n)^{\prime}(\Pi(i_1\ldots i_n)^{\infty})|^{\frac{r}{2}} \right) \\
&\leq \lim_{n \to \infty} \frac{1}{n} \log \left( \sum_{i_1\ldots i_n \in \N^n} (p_{i_1}\ldots p_{i_n})^{1-r} (p_{i_1}\ldots p_{i_n})^{\frac{r}{2}}(|\I_{i_1}|\ldots |\I_{i_n}|)^{-\frac{r}{2}} C^{n\frac{r}{2}}  \right) \\
&\leq \lim_{n \to \infty} \frac{1}{n} \log \left( \sum_{i_1\ldots i_n \in \N^n} (p_{i_1}\ldots p_{i_n})^{1-r}  K^{n}C^{n\frac{r}{2}} \right) \\
&= \frac{r}{2}\log C + \log K + \log\left(\sum_{n \in \N} p_n^{1-r}\right) < \infty
\end{align*}
where the second inequality follows by Proposition \ref{bd}. Therefore, by \cite[Theorem 2.6.12]{mu} $P(-b\log |T^{\prime}|+tf_{\p})$ is analytic for all $(t, b) \in [1-\frac{r}{2}, 1+\frac{r}{2}] \times [-\frac{r}{2}, \frac{r}{2}]$ and by the implicit function theorem $\beta_{\p}(t)$ is analytic for all $t \in (1-\frac{r}{2}, 1+\frac{r}{2})$. We will return to show that $\beta_{\p}$ is analytic on the whole interval $[0,1]$ after verifying that (\ref{beta1}) holds for any $t \in (1-\frac{r}{2}, 1+\frac{r}{2})$ (and indeed for all $t$ at which $\beta_{\p}(t)$ is analytic).

To verify (\ref{beta1}) we follow the arguments of Ruelle \cite{ruelle}. Fix $t$ such that $\beta_{\p}$ is analytic at $t$. We differentiate (\ref{bp}) and apply \cite[Proposition 2.6.13]{mu} and the implicit function theorem to deduce that
\begin{eqnarray}
-\beta_{\p}^{\prime}(t) \int \log |T^{\prime}|d\mu_{\p,t} + \int f_{\p} d\mu_{\p,t} =0.
\label{deriv}
\end{eqnarray}
In particular, since $\beta_{\p}(1)=0$, it follows that $\mu_{\p,1}=\mu_{\p}$ and therefore
$$\dim \mu_{\p}= \frac{h(\mu_{\p})}{\chi(\mu_{\p})}=-\frac{\int f_{\p}d\mu_{\p,1}}{\int \log|T^{\prime}|d\mu_{\p,1}}= -\beta_{\p}^{\prime}(1)=|\beta_{\p}^{\prime}(1)|.$$

Using this we can now show that in fact $\beta_{\p}(t)$ is analytic for \emph{all} $t \in [0,1]$. By Hypothesis \ref{hyp} $|\beta_{\p}^{\prime}(1)|=\dim \mu_{\p}> \frac{1}{2}$, therefore it follows by convexity of $\beta_{\p}$ that $\beta_{\p}(t)> \frac{1}{2} (1-t)$ for all $t \in [0,1]$. In particular for all $t \in [0,1]$
\begin{eqnarray}
\beta_{\p}(t)+t \geq \beta_{\p}(t)+\frac{1}{2} t> \frac{1}{2}.
\label{beta lb}
\end{eqnarray}
Fix $t$ and choose $\epsilon$ sufficiently small so that $\beta_{\p}(t)+t-2\epsilon> \frac{1}{2}$. Then for all $(u, b) \in (t-\epsilon, t+\epsilon) \times (\beta_{\p}(t)-\epsilon, \beta_{\p}(t)+\epsilon)$,
\begin{eqnarray*}
P(-b\log|T^{\prime}|+uf_{\p})&=& \lim_{n \to \infty} \frac{1}{n} \log \left(\sum_{\i \in \N^n} \frac{p_{\i}^u}{|(T^n)^{\prime}(\Pi((\i)^{\infty}))|^{b}}\right) \\
&\leq& C'+ \log \left(\sum_{n \in \N} \frac{1}{n^{2(b+u)}}\right)< \infty 
\end{eqnarray*}
where $C'$ is a constant coming from Proposition \ref{bd} and the fact that $p_n=O(\frac{1}{n^2})$, and the final inequality is because $b+u \geq \beta_{\p}(t)+t-2\epsilon>\frac{1}{2}$.

By the implicit function theorem and \cite[Theorem 2.6.12]{mu}, $\beta_{\p}(t)$ is analytic for all $t \in [0,1]$, and the derivative $\beta_{\p}^{\prime}(t)$ satisfies (\ref{beta1}) by the same arguments as before.

To verify (\ref{beta2}) we differentiate (\ref{deriv}) to obtain
$$\beta_{\p}^{\prime\prime}(t) \int \log|T^{\prime}| \textup{d}\mu_{\p,t} + \beta_{\p}^{\prime}(t) \frac{\textup{d} \left( \int \log |T^{\prime}| \textup{d}\mu_{\p,t}\right)}{\textup{d} t} - \frac{\textup{d}\left(\int f_{\p} \textup{d}\mu_{\p,t}\right)}{\textup{d} t}=0.$$
By \cite[Proposition 2.6.14]{mu}, 
$$\frac{\textup{d} \left( \int \log |T^{\prime}| \textup{d}\mu_{\p,t}\right)}{\textup{d} t}= \sigma^2_{\mu_{\p,t}}( -\beta_{\p}^{\prime}(t)\log|T^{\prime}|+f_{\p}, \log|T^{\prime}|) $$
and
$$\frac{\textup{d}\left(\int f_{\p} \textup{d}\mu_{\p,t}\right)}{\textup{d} t}= \sigma^2_{\mu_{\p,t}}( -\beta_{\p}^{\prime}(t)\log|T^{\prime}|+f_{\p}, f_{\p}) $$
and therefore
\begin{eqnarray}
\beta_{\p}^{\prime\prime}(t) = \frac{\sigma^2_{\mu_{\p,t}}(-\beta_{\p}^{\prime}(t)\log |T^{\prime}|+f_{\p})}{\int \log|T^{\prime}|\textup{d}\mu_{\p,t}} \geq 0.
\label{deriv2}
\end{eqnarray}
By (\ref{deriv}), $\mu_{\p,t}(-\beta_{\p}^{\prime}(t)\log|T^{\prime}|+f_{\p})=0$ for all $\p$ and $t$ (where $\mu_{\p,t}(f)$ denotes $\int f \textup{d} \mu_{\p,t}$), thus $\sigma_{\mu_{\p,t}}^2(f_{\p,t})$ is given by (\ref{variance}).

\end{proof}

By rewriting $\dim \mu_{\p}$ as the absolute value of the derivative of $\beta_{\p}$ at 1, we are now able to exploit the tools of calculus to find an upper bound on $|\beta_{\p}^{\prime}(1)|=\dim \mu_{\p}$. In particular we are interested in showing that $\beta_{\p}$ is `uniformly convex' in some compact interval of $t$. Therefore we need to obtain lower bounds on $\beta_{\p}^{\prime\prime}(t)$ which are uniform over all $\p$ which satisfy Hypothesis \ref{hyp} and all $t$ belonging to some compact interval.

From now on we shall denote $f_{\p,t}: [0,1] \setminus \Q \to \R$ by
\begin{eqnarray}
f_{\p,t}= -\beta_{\p}^{\prime}(t)\log |T^{\prime}|+f_{\p}.
\label{fpt}
\end{eqnarray}

By (\ref{deriv2}), we are interested in finding an upper bound for the Lyapunov exponent $\chi(\mu_{\p,t})$ and a lower bound for the variance $\sigma^2_{\mu_{\p,t}}(f_{\p,t})$ which henceforth we will denote by $\sigma_{\p,t}^2(f_{\p,t})$. The Lyapunov exponent is not difficult to estimate from above, but we will delay this until Lemma \ref{e2 proof}. Instead, our primary focus will be obtaining a lower bound for the variance. It is well known that the variance satisfies
\begin{eqnarray} \label{gk fpt}
\sigma^2_{\p,t}(f_{\p,t})= \int \tilde{f}_{\p,t}^2\textup{d}\mu_{\p,t}+ 2\sum_{n=1}^{\infty}\left(\int \tilde{f}_{\p,t} \cdot \tilde{f}_{\p,t} \circ T^n \textup{d}\mu_{\p,t}\right) 
\end{eqnarray}
for any function $\tilde{f}_{\p,t}$ which is cohomologous to $f_{\p,t}$. The second term on the right hand side of (\ref{gk fpt}) is what makes it difficult to study lower bounds on the variance. Therefore, our aim is to find a coboundary $U_{\p,t}-U_{\p,t} \circ T$ such that if we substitute $\tilde{f}_{\p,t}=f_{\p,t}+U_{\p,t}-U_{\p,t}\circ T$ into (\ref{gk fpt}) then the right hand term will vanish. Therefore, in the first part of this section we introduce a family of transfer operators which will aid us towards obtaining the appropriate function $U_{\p,t}$ for which $\sigma_{\p,t}^2(f_{\p,t})= \int \tilde{f}_{\p,t}^2\textup{d}\mu_{\p,t}$. To this end, we introduce a family of transfer operators.

\begin{defn}
For a fixed $\p$ and $t$ we  define the bounded linear operator $\l_{\p,t}: \mathcal{H} \to \mathcal{H}$ by
$$\l_{\p, t} w(x)= \sum_{Ty=x} \exp(g_{\p,t}(y))w(y).$$
Note that this can be written alternatively as
$$\l_{\p, t} w(x)= \sum_{n \in \mathbb{N}} \exp(g_{\p,t}(T_n^{-1}x))w(T_n^{-1}x).$$ 
\end{defn}

Notice that each operator in the family above is well-defined since 
$$\sum_{n \in \N} \exp(g_{\p,t}(T_n^{-1}x))= \sum_{n \in \N} \frac{p_n^t}{|T^{\prime}(T_n^{-1}x)|^{\beta_{\p}(t)}}< \infty.$$ 

It will be more convenient for us to work with the normalised transfer operator. 

\begin{prop} \label{rpf2}
There exists a normalised operator $\m_{\p,t}: \mathcal{H} \to \mathcal{H}$ given by
$$\m_{\p,t} w= h_{\p,t}^{-1}\l_{\p,t}(h_{\p,t} w)$$
such that $\m_{\p,t} \one=\one$, where $h_{\p,t}$ is the unique fixed point of $\l_{\p,t}$. Equivalently $\m_{\p,t}w=\sum_{Ty=x} \exp(\tilde{g}_{\p,t}(y))w(y)$ where $\tilde{g}_{\p,t}=g_{\p,t}+h_{\p,t}-h_{\p,t} \circ T$. Moreover, $\m_{\p,t}^{\ast} \mu_{\p,t}= \mu_{\p,t}$ and $\textup{d}\mu_{\p,t}=h_{\p,t}\textup{d} \tilde{\mu}_{\p,t}$ where $\l_{\p,t}^{\ast}\tilde{\mu}_{\p,t}=\tilde{\mu}_{\p,t}$. 
\end{prop}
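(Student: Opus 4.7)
The entire statement is essentially an application of the Ruelle-Perron-Frobenius theorem (Proposition \ref{rpf}) and Proposition \ref{exist} to the potential $g_{\p,t}$ defined in (\ref{gpt}); the bulk of the work is simply checking that $g_{\p,t}$ satisfies the hypotheses required to invoke those results, after which the remaining claims follow from direct computation.

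The plan is as follows. First I would verify that $g_{\p,t}$ is locally H\"older: since $-\log|T^{\prime}|\in\H_{2/3}$ (Subsection 2.4) and $f_{\p}$ is constant on each cylinder $\I_n$, one has $\textnormal{var}_n(g_{\p,t}) = \beta_{\p}(t)\cdot\textnormal{var}_n(\log|T^{\prime}|)$, which decays exponentially in $n$. Next I would verify summability: by Proposition \ref{bd} and the fact that $|\I_n|\asymp n^{-2}$, $|T^{\prime}(x)|^{-\beta_{\p}(t)}\asymp n^{-2\beta_{\p}(t)}$ uniformly for $x\in\I_n$, and combined with the standing assumption $p_n=O(n^{-2})$ this gives
$$\sum_{n\in\N}\exp\!\bigl(\sup g_{\p,t}|_{\I_n}\bigr) = O\!\left(\sum_{n\in\N}\frac{1}{n^{2(t+\beta_{\p}(t))}}\right),$$
which converges since (\ref{beta lb}) supplies the uniform strict lower bound $t+\beta_{\p}(t)>1/2$ on $[0,1]$. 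Because $P(g_{\p,t})=0$ is precisely the defining property (\ref{bp}) of $\beta_{\p}(t)$, Proposition \ref{rpf} applies directly to $g_{\p,t}$ and yields the positive eigenfunction $h_{\p,t}\in\H$ with $\l_{\p,t}h_{\p,t}=h_{\p,t}$, the eigenmeasure $\tilde{\mu}_{\p,t}$ with $\l_{\p,t}^{\ast}\tilde{\mu}_{\p,t}=\tilde{\mu}_{\p,t}$, and the invariant Gibbs measure $\textup{d}\mu_{\p,t}=h_{\p,t}\textup{d}\tilde{\mu}_{\p,t}$.

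The remaining claims are routine bookkeeping. Defining $\m_{\p,t}w:=h_{\p,t}^{-1}\l_{\p,t}(h_{\p,t}w)$ immediately gives $\m_{\p,t}\one = h_{\p,t}^{-1}\l_{\p,t}h_{\p,t} = \one$, and pulling $h_{\p,t}(x)^{-1}=h_{\p,t}(Ty)^{-1}$ inside the sum $\l_{\p,t}(h_{\p,t}w)(x)=\sum_{Ty=x}\exp(g_{\p,t}(y))h_{\p,t}(y)w(y)$ absorbs the normalisation into the exponent and produces the pointwise formula $\m_{\p,t}w(x)=\sum_{Ty=x}\exp(\tilde{g}_{\p,t}(y))w(y)$ with $\tilde{g}_{\p,t}=g_{\p,t}+\log h_{\p,t}-\log h_{\p,t}\circ T$. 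The dual identity $\m_{\p,t}^{\ast}\mu_{\p,t}=\mu_{\p,t}$ then follows from
$$\int\m_{\p,t}w\,\textup{d}\mu_{\p,t} = \int h_{\p,t}^{-1}\l_{\p,t}(h_{\p,t}w)\cdot h_{\p,t}\,\textup{d}\tilde{\mu}_{\p,t} = \int\l_{\p,t}(h_{\p,t}w)\,\textup{d}\tilde{\mu}_{\p,t} = \int h_{\p,t}w\,\textup{d}\tilde{\mu}_{\p,t} = \int w\,\textup{d}\mu_{\p,t},$$
using $\textup{d}\mu_{\p,t}=h_{\p,t}\textup{d}\tilde{\mu}_{\p,t}$ in the first and penultimate equalities and $\l_{\p,t}^{\ast}\tilde{\mu}_{\p,t}=\tilde{\mu}_{\p,t}$ in the third. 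The only mildly non-trivial step in the whole argument is the summability verification; once the uniform lower bound $t+\beta_{\p}(t)>1/2$ from (\ref{beta lb}) is in hand, the proposition is essentially a restatement of the RPF machinery already invoked in the proof of Proposition \ref{bp properties}.
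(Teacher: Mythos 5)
Your proof is correct and takes the same route as the paper, whose entire argument is the single sentence ``This is essentially a restatement of Proposition \ref{rpf}''; you spell out the hypothesis verification (local H\"older regularity inherited from $-\log|T'|$, summability from $p_n=O(n^{-2})$ together with the lower bound $t+\beta_{\p}(t)>1/2$ of (\ref{beta lb}), and $P(g_{\p,t})=0$ by construction) and the routine bookkeeping that the paper leaves implicit. One useful catch: your formula $\tilde{g}_{\p,t}=g_{\p,t}+\log h_{\p,t}-\log h_{\p,t}\circ T$ is the correct one, whereas the statement of the proposition in the paper omits the logarithms — a typo, as confirmed by the paper's later use of $S_n(\log h_{\p,t}-\log h_{\p,t}\circ T)$ in the proof of Lemma \ref{measure lemma}.
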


\begin{proof}
This is essentially a restatement of Proposition \ref{rpf}.
\end{proof}

When seeking \emph{upper} estimates on the variance, the second term on the right hand side in (\ref{gk fpt}) can easily be dealt with, for instance one can bound it above by knowing an explicit rate for the decay of the correlation functions. However, when one is interested in \emph{lower} estimates, this term makes the variance difficult to bound from below. Since (\ref{gk fpt}) holds for any $\tilde{f}_{\p,t}$ which is cohomologous to $f_{\p,t}$, it would be useful if we could find some $\tilde{f}_{\p,t} \sim f_{\p,t}$ for which
$$\int \tilde{f}_{\p,t} \cdot \tilde{f}_{\p,t} \circ T^n \textup{d}\mu_{\p,t}=0$$
for all $n \in \N$. Since $\m_{\p,t}^{\ast} \mu_{\p,t}=\mu_{\p,t}$ and $\m_{\p,t}^n(\tilde{f}_{\p,t} \cdot \tilde{f}_{\p,t} \circ T^n)= \tilde{f}_{\p,t} \m_{\p,t}^n(\tilde{f}_{\p,t})$ we can rewrite the above as
\begin{eqnarray*}
\int \tilde{f}_{\p,t} \cdot \tilde{f}_{\p,t} \circ T^n \textup{d}\mu_{\p,t}&=& \int \m_{\p,t}^n(\tilde{f}_{\p,t} \cdot \tilde{f}_{\p,t} \circ T^n)\textup{d}\mu_{\p,t} \\
&=& \int \tilde{f}_{\p,t} \cdot \m_{\p,t}^n(\tilde{f}_{\p,t}) \textup{d}\mu_{\p,t}=0.
\end{eqnarray*}
Writing $\tilde{f}_{\p,t}=f_{\p,t}  + U_{\p,t} - U_{\p,t} \circ T$ for some coboundary $ U_{\p,t} - U_{\p,t} \circ T$, it transpires that the property we want is $\m_{\p,t}(f_{\p,t}  + U_{\p,t} - U_{\p,t} \circ T)=0$. This leads us to the following definition for $U_{\p,t}$, which we now fix.

\begin{defn}
Define $U_{\p,t}:[0,1] \setminus \Q \to \R$ by
$$U_{\p,t}= \sum_{n=1}^{\infty}  \m_{\p,t}^n\left(f_{\p,t}\right)$$
and $\tilde{f}_{\p,t}:[0,1] \setminus \Q \to \R$ by
$$\tilde{f}_{\p,t}= f_{\p,t} + U_{\p,t}- U_{\p,t} \circ T.$$
\label{coboundary}
\end{defn}

It will be a consequence of Lemma \ref{lemma r} that $U_{\p,t} \in \H_{\alpha}$ (although it is already not difficult to see this: it is easy to show that $\m_{\p,t} f_{\p,t} \in \H_{\alpha}$, and therefore by \cite[Theorem 2.4.6]{mu} one can deduce that $\norm{\m_{\p,t}^n f_{\p,t}}_{\alpha}$ decays exponentially fast in $n$). As suggested above, it turns out that this definition for $U_{\p,t}$ fits our purposes.

\begin{lma} \label{coboundary good}
For all $\p$ and $t$,
$$\m_{\p,t}(\tilde{f}_{\p,t})=\m_{\p,t}(f_{\p,t}+U_{\p,t}-U_{\p,t} \circ T)=0.$$
\end{lma}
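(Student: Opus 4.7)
The plan is to exploit two straightforward properties of the normalised transfer operator $\mathcal{M}_{\mathbf{p},t}$. First, the identity $\mathcal{M}_{\mathbf{p},t}(w \cdot (v \circ T)) = v \cdot \mathcal{M}_{\mathbf{p},t}(w)$ combined with $\mathcal{M}_{\mathbf{p},t}\mathbf{1} = \mathbf{1}$ from Proposition \ref{rpf2} gives the key fact
\[\mathcal{M}_{\mathbf{p},t}(v \circ T) = v\]
for any reasonable test function $v$. Second, by the very definition of $U_{\mathbf{p},t}$ as $\sum_{n=1}^\infty \mathcal{M}_{\mathbf{p},t}^n f_{\mathbf{p},t}$, applying $\mathcal{M}_{\mathbf{p},t}$ once simply shifts the summation index:
\[\mathcal{M}_{\mathbf{p},t}(U_{\mathbf{p},t}) = \sum_{n=1}^\infty \mathcal{M}_{\mathbf{p},t}^{n+1} f_{\mathbf{p},t} = \sum_{n=2}^\infty \mathcal{M}_{\mathbf{p},t}^{n} f_{\mathbf{p},t} = U_{\mathbf{p},t} - \mathcal{M}_{\mathbf{p},t}(f_{\mathbf{p},t}).\]

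Given these two identities, the lemma follows by writing out the definition of $\tilde{f}_{\mathbf{p},t}$ and invoking linearity:
\[\mathcal{M}_{\mathbf{p},t}(\tilde{f}_{\mathbf{p},t}) = \mathcal{M}_{\mathbf{p},t}(f_{\mathbf{p},t}) + \mathcal{M}_{\mathbf{p},t}(U_{\mathbf{p},t}) - \mathcal{M}_{\mathbf{p},t}(U_{\mathbf{p},t} \circ T).\]
The third term collapses to $U_{\mathbf{p},t}$ by the first identity, and the second to $U_{\mathbf{p},t} - \mathcal{M}_{\mathbf{p},t}(f_{\mathbf{p},t})$ by the second, so everything cancels to zero.

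The only nontrivial point is to justify that $U_{\mathbf{p},t}$ is well defined as an element of $\mathcal{H}_\alpha$ on which $\mathcal{M}_{\mathbf{p},t}$ acts continuously, so that the index shift and the interchange of $\mathcal{M}_{\mathbf{p},t}$ with the infinite sum are legitimate. I would note that this is exactly the parenthetical remark made just before the lemma statement: one checks directly that $\mathcal{M}_{\mathbf{p},t}f_{\mathbf{p},t} \in \mathcal{H}_\alpha$ and then appeals to the spectral gap on $\mathcal{H}_\alpha$ furnished by \cite[Theorem 2.4.6]{mu} to conclude that $\|\mathcal{M}_{\mathbf{p},t}^n f_{\mathbf{p},t}\|_\alpha$ decays geometrically, giving absolute convergence of the series in $\|\cdot\|_\alpha$ and hence continuity under $\mathcal{M}_{\mathbf{p},t}$. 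Alternatively one may defer to the forthcoming Lemma \ref{lemma r}. Either way, the substantive content is the algebraic cancellation above; the convergence is a routine appeal to standard Ruelle--Perron--Frobenius theory.
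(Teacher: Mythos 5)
Your argument is correct and is essentially the paper's proof: the paper performs the same index shift on $\m_{\p,t}(U_{\p,t})$, collapses $\m_{\p,t}(U_{\p,t}\circ T)$ term by term using $\m_{\p,t}(f_{\p,t}\circ T)=f_{\p,t}\cdot \m_{\p,t}(\one)=f_{\p,t}$, and cancels. The only cosmetic difference is that you isolate the pull-through identity $\m_{\p,t}(v\circ T)=v$ as an explicit standalone step and flag the convergence issue, which the paper relegates to the parenthetical remark preceding the lemma.
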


\begin{proof}
It follows from definition that
\begin{eqnarray*}
\m_{\p,t}(\tilde{f}_{\p,t}) &=& \m_{\p,t}(f_{\p,t})+ \m_{\p,t}(U_{\p,t}) - \m_{\p,t}(U_{\p,t} \circ T) \\
&=& \m_{\p,t}(f_{\p,t})+ \sum_{n=2}^{\infty} \m_{\p,t}^n(f_{\p,t}) - \sum_{n=2}^{\infty} \m_{\p,t}^n(f_{\p,t} \circ T) \\
&=& \sum_{n=1}^{\infty} \m_{\p,t}^n(f_{\p,t}) -\sum_{n=2}^{\infty} \m_{\p,t}^n(f_{\p,t} \circ T) \\
&=& \sum_{n=1}^{\infty} \m_{\p,t}^n(f_{\p,t}) -\sum_{n=2}^{\infty} \m_{\p,t}^{n-1}(\m_{\p,t}(f_{\p,t} \circ T))\\
&=& \sum_{n=1}^{\infty} \m_{\p,t}^n(f_{\p,t})- \sum_{n=2}^{\infty} \m_{\p,t}^{n-1}(f_{\p,t} \cdot \m_{\p,t}(\mathbf{1})) \\
&=& 0.
\end{eqnarray*}
\end{proof}

As an immediate corollary to the above, we can write the variance as a single integral as we intended.

\begin{cor}
We can write
$$\sigma_{\p,t}^2(f_{\p,t})=\int \tilde{f}_{\p,t}^2 \textup{d}\mu_{\p,t}.$$
\label{rewrite2}
\end{cor}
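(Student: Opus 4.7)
The plan is to substitute the cohomologous function $\tilde{f}_{\p,t}$ into the variance formula (\ref{gk fpt}) and show that every cross-correlation term vanishes, thanks to Lemma \ref{coboundary good}. Since $\tilde{f}_{\p,t} = f_{\p,t} + U_{\p,t} - U_{\p,t}\circ T$ is cohomologous to $f_{\p,t}$ by construction (and $U_{\p,t} \in \H_\alpha$, as will be justified later via Lemma \ref{lemma r}), identity (\ref{gk fpt}) is applicable and yields
\[
\sigma_{\p,t}^2(f_{\p,t}) = \int \tilde{f}_{\p,t}^2\, \textup{d}\mu_{\p,t} + 2\sum_{n=1}^{\infty} \int \tilde{f}_{\p,t} \cdot \tilde{f}_{\p,t}\circ T^n\, \textup{d}\mu_{\p,t}.
\]
So it suffices to show that for every $n \geq 1$ the cross-correlation $\int \tilde{f}_{\p,t} \cdot \tilde{f}_{\p,t}\circ T^n\, \textup{d}\mu_{\p,t}$ is zero.

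The key step uses the duality $\m_{\p,t}^{\ast}\mu_{\p,t} = \mu_{\p,t}$ from Proposition \ref{rpf2} together with the standard transfer operator identity $\m_{\p,t}(\phi \cdot \psi\circ T) = \psi \cdot \m_{\p,t}(\phi)$, which iterates to $\m_{\p,t}^n(\phi \cdot \psi\circ T^n) = \psi \cdot \m_{\p,t}^n(\phi)$. Applying these in turn with $\phi = \psi = \tilde{f}_{\p,t}$ gives
\[
\int \tilde{f}_{\p,t} \cdot \tilde{f}_{\p,t}\circ T^n\, \textup{d}\mu_{\p,t} = \int \m_{\p,t}^n\!\left(\tilde{f}_{\p,t} \cdot \tilde{f}_{\p,t}\circ T^n\right)\textup{d}\mu_{\p,t} = \int \tilde{f}_{\p,t} \cdot \m_{\p,t}^n(\tilde{f}_{\p,t})\, \textup{d}\mu_{\p,t}.
\]
By Lemma \ref{coboundary good}, $\m_{\p,t}(\tilde{f}_{\p,t}) = 0$, and hence $\m_{\p,t}^n(\tilde{f}_{\p,t}) = \m_{\p,t}^{n-1}(0) = 0$ for all $n \geq 1$. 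This kills every term in the infinite sum, yielding $\sigma_{\p,t}^2(f_{\p,t}) = \int \tilde{f}_{\p,t}^2\, \textup{d}\mu_{\p,t}$.

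I do not anticipate any genuine obstacle: the only technical point is ensuring that the manipulations above are legitimate (integrability of $\tilde{f}_{\p,t}^2$ against $\mu_{\p,t}$ and convergence of the Green-Kubo series), but since $\mu_{\p,t}$ is a probability measure and the regularity of $f_{\p,t}$ and $U_{\p,t}$ places $\tilde{f}_{\p,t}$ in a space where both sides of (\ref{gk fpt}) are well-defined, this is routine. The corollary is therefore essentially a one-line consequence of Lemma \ref{coboundary good} combined with the standard duality of the normalised transfer operator.
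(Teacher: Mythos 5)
Your proof is correct and follows essentially the same route as the paper: apply $\m_{\p,t}^{\ast}\mu_{\p,t}=\mu_{\p,t}$ together with $\m_{\p,t}^n(\phi\cdot\psi\circ T^n)=\psi\cdot\m_{\p,t}^n(\phi)$ to rewrite each correlation term as $\int \tilde{f}_{\p,t}\cdot\m_{\p,t}^n(\tilde{f}_{\p,t})\,\textup{d}\mu_{\p,t}$, then kill it with Lemma \ref{coboundary good}. Nothing to add.
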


\begin{proof}
By (\ref{gk fpt})
$$\sigma_{\p,t}^2(f_{\p,t})= \int \tilde{f}_{\p,t}^2\textup{d}\mu_{\p,t} + 2 \sum_{n=1}^{\infty} \int \tilde{f}_{\p,t} \cdot \tilde{f}_{\p,t} \circ T^n \textup{d}\mu_{\p,t}.$$
Therefore,
\begin{eqnarray*}
\sigma_{\p,t}^2(f_{\p,t}) &=& \int \tilde{f}_{\p,t}^2 \textup{d}\mu_{\p,t} + 2\sum_{n=1}^{\infty} \int \tilde{f}_{\p,t} \cdot \tilde{f}_{\p,t}\circ T^n \textup{d}\mu_{\p,t} \\
&=& \int \tilde{f}_{\p,t}^2 \textup{d}\mu_{\p,t} + 2\sum_{n=1}^{\infty} \int \m_{\p,t}^n(\tilde{f}_{\p,t} \cdot \tilde{f}_{\p,t} \circ T^n)\textup{d}\mu_{\p,t} \\
&=& \int \tilde{f}_{\p,t}^2 \textup{d}\mu_{\p,t} +2 \sum_{n=1}^{\infty} \int \tilde{f}_{\p,t} \cdot \m_{\p,t}^n(\tilde{f}_{\p,t}) \textup{d}\mu_{\p,t} \\
&=& \int \tilde{f}_{\p,t}^2 \textup{d}\mu_{\p,t}
\end{eqnarray*}
since $\m_{\p,t}^n(\tilde{f}_{\p,t})=0$ for all $n \in \mathbb{N}$. 
\end{proof}

Now that we have managed to find a cohomologous function $\tilde{f}_{\p,t} \sim f_{\p,t}$ with the property that $\sigma^2_{\p,t}(f_{\p,t})= \int \tilde{f}_{\p,t}^2\textup{d}\mu_{\p,t}$, we can shift our focus to estimating $\int \tilde{f}^2_{\p,t}\textup{d}\mu_{\p,t}$ in a uniform way.

Let $I=[\frac{1}{8}, \frac{1}{4}]$ and notice that for all $t \in I$,
\begin{eqnarray}
\beta_{\p}(t) \geq \beta_{\p}\left(\frac{1}{4}\right)= \frac{\beta_{\p}(\frac{1}{4})}{1-\frac{1}{4}} \cdot \left(1-\frac{1}{4}\right) \geq \frac{3}{4} |\beta_{\p}^{\prime}(1)| \geq \frac{9}{16} \label{916} \end{eqnarray}
since $\beta_{\p}$ is convex and $\dim \mu_{\p} \geq \frac{3}{4}.$  Let $\mathcal{Z}$ be a finite set of periodic points of $T$. Suppose there exist constants $c_1$ and $c_2$ such that for all $\p$ and $t \in I$,
\begin{enumerate}
\item there exists a periodic point $z \in \mathcal{Z}$ of period $n$ such that $\frac{1}{n}|S_n \tilde{f}_{\p,t}(z)|\geq c_1$,
\item $[\tilde{f}_{\p,t}]_{\alpha}\leq c_2 $.
\end{enumerate}

Then we can bound $\int \tilde{f}^2_{\p,t}\textup{d}\mu_{\p,t}$ from below by a `strip' of the integral which is determined by an interval centred at an appropriate point $z'$ in the orbit of $z=\Pi(\i)$ for which $|\tilde{f}_{\p,t}(z')| \geq c_1$. We simply need to make the interval width sufficiently small so that $\tilde{f}_{\p,t}$ remains large within the interval, which we can do by using the H\"older properties of $\tilde{f}_{\p,t}$. In particular if $m$ is large enough that $\alpha^m \leq \frac{c_1}{2c_2}$ then for any $y \in \I_{\i|_m}$,
$$|\tilde{f}_{\p,t}(y)-\tilde{f}_{\p,t}(z)| \leq \frac{c_1}{2}$$
so it follows that for all $y \in \I_{\i|_m}$, $|\tilde{f}_{\p,t}(y)| \geq \frac{c_1}{2}$. Therefore
\begin{eqnarray}
\label{strategy}
\sigma_{\p,t}^2(f_{\p,t})= \int \tilde{f}^2_{\p,t}\textup{d}\mu_{\p,t} \geq \frac{c_1^2}{4} \mu_{\p,t}(\I_{\i|_m}).
\end{eqnarray}

Therefore we see that a uniform lower bound on $\sigma_{\p,t}^2(f_{\p,t})$ depends on the following three lemmas. In what follows each statement holds uniformly for all $\p$ that satisfy Hypothesis \ref{hyp} and all $t \in I$.

\begin{lma} \label{lemma periodic}
Given $\i \in \Sigma^{\ast}$ let $z_{\i}$ denote the periodic point for $T$ given by $z_{\i}=\Pi((\i)^{\infty})$. There exists a uniform constant $c_1$ independent of $\p$ and $t$ such that for any $t$ and $\p$, there exists $z \in \{z_1, z_2, z_{12}\}$ for which
$$\left|\frac{1}{2}S_{2} \tilde{f}_{\p, t}(z)\right| \geq c_1.$$
Moreoever, for any $\p$ which satisfies $p_1> \psi$, $|f_{\p,t}(z_1)| \geq c_1$.
\end{lma}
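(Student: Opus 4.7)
The plan is to exploit the fact that $U_{\p,t}-U_{\p,t}\circ T$ is a coboundary, so if $z$ has period $n$ under $T$ then $S_n(U_{\p,t}-U_{\p,t}\circ T)(z)=U_{\p,t}(z)-U_{\p,t}(T^n z)=0$; hence $S_n\tilde f_{\p,t}(z)=S_n f_{\p,t}(z)$ for every $z\in\{z_1,z_2,z_{12}\}$. This reduces the problem to bounding Birkhoff averages of $f_{\p,t}=-\beta_{\p}'(t)\log|T'|+f_{\p}$ at these three explicit periodic orbits, where the dependence on $\p$ enters only through $\log p_1$ and $\log p_2$.

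Setting $A_1=\log|T'(z_1)|$, $A_2=\log|T'(z_2)|$ and $A_{12}=\tfrac12\log|(T^2)'(z_{12})|$, the three averages are
\begin{align*}
E_1&:=\tfrac12 S_2 f_{\p,t}(z_1)=-\beta_{\p}'(t)A_1+\log p_1,\\
E_2&:=\tfrac12 S_2 f_{\p,t}(z_2)=-\beta_{\p}'(t)A_2+\log p_2,\\
E_{12}&:=\tfrac12 S_2 f_{\p,t}(z_{12})=-\beta_{\p}'(t)A_{12}+\tfrac12(\log p_1+\log p_2).
\end{align*}
The key cancellation is the linear combination
$$2E_{12}-E_1-E_2=-\beta_{\p}'(t)\bigl(2A_{12}-A_1-A_2\bigr)=-\beta_{\p}'(t)\cdot\Delta,$$
where $\Delta$ is a universal constant independent of $\p$ and $t$. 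Using $z_1=\tfrac{\sqrt5-1}{2}$, $z_2=\sqrt2-1$, $z_{12}=\sqrt3-1$ and $T(z_{12})=\tfrac{\sqrt3-1}{2}$, one obtains $A_1=-2\log\tfrac{\sqrt5-1}{2}$, $A_2=-2\log(\sqrt2-1)$, $A_{12}=\tfrac12\log(7+4\sqrt3)$, and a short explicit computation shows $\Delta\neq 0$.

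Next I need a uniform lower bound on $|\beta_{\p}'(t)|$ for $t\in I$. By Proposition \ref{bp properties}(1), $\beta_{\p}'$ is nondecreasing, while by (\ref{beta1}) and Hypothesis \ref{hyp}, $|\beta_{\p}'(1)|=\dim\mu_{\p}>\tfrac34$. Hence $|\beta_{\p}'(t)|\geq\tfrac34$ for every $t\in[0,1]$, so $|2E_{12}-E_1-E_2|\geq\tfrac34|\Delta|$ uniformly in $\p$ and $t$. The triangle inequality then forces $\max\{|E_1|,|E_2|,|E_{12}|\}\geq\tfrac{3}{16}|\Delta|$, which yields the first claim with $c_1:=\tfrac{3}{16}|\Delta|$.

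For the moreover part, suppose $p_1>\psi=|T'(z_1)|^{-1/4}$, so that $\log p_1>-\tfrac{A_1}{4}$. Since $f_{\p,t}(z_1)=E_1$ and $|\beta_{\p}'(t)|\geq\tfrac34$,
$$E_1=|\beta_{\p}'(t)|\,A_1+\log p_1>\tfrac34 A_1-\tfrac14 A_1=\tfrac{A_1}{2}>0,$$
so $|f_{\p,t}(z_1)|\geq A_1/2$, and we may shrink $c_1$ to $\min\{\tfrac{3}{16}|\Delta|,\tfrac{A_1}{2}\}$. The only real obstacle is confirming $\Delta\neq 0$ (everything else is soft convexity and pigeonhole); this is a finite algebraic check that avoids any accidental coincidence between the Lyapunov exponents at the three chosen orbits.
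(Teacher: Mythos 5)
Your proof is correct and takes essentially the same approach as the paper: both exploit that the coboundary part of $\tilde f_{\p,t}$ vanishes over periodic orbits, form the combination $2E_{12}-E_1-E_2$ to cancel $\log p_1,\log p_2$ and isolate the nonlinearity constant $\Delta=\log\frac{T'(z_{12})T'(z_{21})}{T'(z_1)T'(z_2)}$, then use $|\beta_{\p}'(t)|\geq \tfrac34$ (from convexity plus Hypothesis~\ref{hyp}) together with a pigeonhole/triangle-inequality step. The ``moreover'' clause is handled word for word as in the paper, and your constant $\tfrac{3}{16}|\Delta|$ is just a marginally sharper accounting of the same inequality chain.
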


\begin{lma}
The function $U_{\p,t}\in \lip$ for all $t$ and $\p$. Moreover, there exists a uniform constant $c_2$ such that $[\tilde{f}_{\p,t}]_{\alpha} \leq c_2$. 
\label{lemma r}
\end{lma}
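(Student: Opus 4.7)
The plan is to exploit spectral contraction of the normalised transfer operator $\m_{\p,t}$. Although $f_{\p,t}$ is itself unbounded, a single application of $\m_{\p,t}$ regularises it, and subsequent iterates decay geometrically because $\int f_{\p,t}\,\textup{d}\mu_{\p,t} = 0$ by (\ref{deriv}).

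First I would show that $\m_{\p,t} f_{\p,t}$ is uniformly bounded and Lipschitz. Writing
\[
\m_{\p,t} f_{\p,t}(x) = \sum_{n \in \N} \frac{p_n^t\, h_{\p,t}(T_n^{-1}x)}{h_{\p,t}(x)\, |T^{\prime}(T_n^{-1}x)|^{\beta_\p(t)}} \bigl(-\beta_\p^{\prime}(t)\log|T^{\prime}(T_n^{-1}x)| + \log p_n\bigr),
\]
each summand is $O(n^{-2t - 2\beta_\p(t)} \log n)$, using $p_n = O(n^{-2})$, $|T^{\prime}(T_n^{-1}x)| \asymp n^2$, Proposition \ref{bd}, and standard uniform two-sided bounds on $h_{\p,t}$ (consequences of the Gibbs property). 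Since $t \in I$ forces $\beta_\p(t) \geq 9/16$ by (\ref{916}), the exponent $2t + 2\beta_\p(t) \geq 11/8$ is bounded away from $1$ uniformly, giving absolute summability and a uniform bound $\|\m_{\p,t} f_{\p,t}\|_\infty \leq K_1$. Differentiating in $x$ and using that each branch $T_n^{-1}$ is Lipschitz with constant $O(n^{-2})$ yields $\|\m_{\p,t} f_{\p,t}\|_{0,1} \leq K_1^{\prime}$ by the same argument.

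Next I would establish that $\m_{\p,t}$ contracts on mean-zero Lipschitz functions with uniform rate. A Lasota-Yorke inequality $\|\m_{\p,t}^n w\|_{0,1} \leq A\rho_0^n[w]_1 + B\|w\|_\infty$ follows from (\ref{renyi1}) and the summability bound above, with $A, B, \rho_0$ uniform; combined with a Doeblin-type minorisation provided by Hypothesis \ref{hyp} (the lower bound on $p_1$, or on $(p_1, p_2)$, prevents $\m_{\p,t}$ from losing mass on $\I_1$ or $\I_1 \cup \I_2$), this gives uniform constants $C > 0$, $\rho \in (0,1)$ with $\|\m_{\p,t}^n w\|_{0,1} \leq C\rho^n\|w\|_{0,1}$ whenever $\int w\,\textup{d}\mu_{\p,t} = 0$. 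Since $\m_{\p,t}^\ast\mu_{\p,t} = \mu_{\p,t}$ and $\int f_{\p,t}\,\textup{d}\mu_{\p,t} = 0$, applying this to $\m_{\p,t} f_{\p,t}$ gives $\|\m_{\p,t}^n f_{\p,t}\|_{0,1} \leq C K_1^{\prime}\rho^{n-1}$ for $n \geq 1$, so the series defining $U_{\p,t}$ converges in $\lip$ with a uniform bound.

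For the second claim, decompose $\tilde{f}_{\p,t} = -\beta_\p^{\prime}(t)\log|T^{\prime}| + f_\p + U_{\p,t} - U_{\p,t}\circ T$ and bound each piece on level-$n$ cylinders: $f_\p$ is constant on each $\I_n$ so $\var_n(f_\p) = 0$ for $n \geq 1$; the variations of $\log|T^{\prime}|$ decay like $\alpha^n$ by (\ref{renyi1}); and $\var_n(U_{\p,t} - U_{\p,t}\circ T) \leq [U_{\p,t}]_1\bigl(|\I_{i_1\ldots i_n}| + |\I_{i_2\ldots i_n}|\bigr) = O(\alpha^n)$, using $|(T^2)^{\prime}|\geq 9/4$ and $\alpha = 2/3 = (4/9)^{1/2}$. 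Combined with the uniform upper bound $|\beta_\p^{\prime}(t)| \leq 11/4$ on $I$ (which follows from convexity of $\beta_\p$, $\beta_\p(0) = 1$, and $\beta_\p(t) \geq (3/4)(1-t)$ by Hypothesis \ref{hyp}), this yields $[\tilde{f}_{\p,t}]_\alpha \leq c_2$ uniformly. The main obstacle is the uniform spectral gap in the third paragraph: verifying that the Doeblin minorisation constant does not collapse as $\p$ varies over vectors satisfying Hypothesis \ref{hyp} is precisely where the lower bound on $p_1$ (or $p_1, p_2$) enters decisively.
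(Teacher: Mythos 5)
Your outline follows the same high-level strategy as the paper: a single application of $\m_{\p,t}$ regularises $f_{\p,t}$ into a uniformly bounded Lipschitz function (matching the paper's Lemma~\ref{e1e2}); a uniform spectral gap for $\m_{\p,t}$ on mean-zero Lipschitz functions then sums the series defining $U_{\p,t}$; and $[\tilde{f}_{\p,t}]_{\alpha}$ is bounded term by term. Your bookkeeping in the first and last steps is sound, and your bound $|\beta_{\p}^{\prime}(t)|\le 11/4$ on $I$ is in fact slightly sharper than the $\le 8$ the paper uses. The gap lies in the middle step, where the two arguments diverge.

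The paper obtains the uniform spectral gap via Birkhoff cone contraction (Lemmas~\ref{fixed pt}--\ref{decay}): because $f_{\p}$ is locally constant it drops out of the H\"older seminorm, so $[g_{\p,t}]_{\alpha}\le[\log|T^{\prime}|]_{\alpha}$ uniformly in $\p$ and $t$, and this together with the expansion of $T^2$ forces $\m_{\p,t}^2$ to map the cone $\c$ strictly into itself with uniformly bounded image diameter. Crucially, this step makes no use of Hypothesis~\ref{hyp} at all. You instead propose a Lasota--Yorke estimate plus a Doeblin minorisation, and conjecture that the minorisation becomes uniform thanks to the lower bound on $p_1$ (or $p_1,p_2$) from Hypothesis~\ref{hyp}. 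That is the wrong place to lean on the hypothesis, and the Doeblin mechanism does not carry over to $\m_{\p,t}$ in the form you invoke: the kernel dual to $\m_{\p,t}$ sends $x$ to a measure supported on the countable preimage set $\{T_n^{-1}x\}_{n}$, a set that moves with $x$, so no fixed minorising measure can sit beneath it for every $x$. A lower bound on the weight of one branch only yields a pointwise estimate $\m_{\p,t}^2 w(x)\ge\delta\, w(y_x)$ at a single preimage $y_x$, not the integral bound $\delta\int w\,\textup{d}\nu$ that Doeblin requires. What actually delivers uniformity is the $\p$-independence of the H\"older data of the normalised potential $\tilde{g}_{\p,t}$, which the cone (equivalently, coupling/Wasserstein-contraction) argument is built to exploit. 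Until the Doeblin step is replaced by such an argument, the uniform spectral gap, and with it the uniform $\lip$ bound on $U_{\p,t}$, is not established.
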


\begin{lma} \label{measure lemma}
There exists $c_3>0$ such that for any $\p$ and $t$, 
$$c_3^{-1} \frac{(p_{i_1}\cdots p_{i_n})^t}{|T^{\prime}(z) \cdots T^{\prime}(T^{n-1}z)|^{\beta_{\p}(t)}} \leq \mu_{\p,t}(\I_{i_1\ldots i_n}) \leq c_3 \frac{(p_{i_1}\cdots p_{i_n})^t}{|T^{\prime}(z) \cdots T^{\prime}(T^{n-1}z)|^{\beta_{\p}(t)}}$$
for any $n \in \N$, $i_1\ldots i_n \in \N^n$ and $z \in \I_{i_1\ldots i_n}$.
\end{lma}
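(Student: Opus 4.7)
The plan is to recognise Lemma \ref{measure lemma} as a \emph{uniform} Gibbs bound for $\mu_{\p,t}$ with respect to the potential $g_{\p,t}$. First I would apply Proposition \ref{exist}: since $\beta_{\p}(t)$ is defined by $P(g_{\p,t})=0$, the measure $\mu_{\p,t}$ is the unique $T$-invariant Gibbs measure for $g_{\p,t}$, and for each fixed $\p$ and $t$ there is some $C(\p,t)>0$ such that
$$C(\p,t)^{-1} \leq \frac{\mu_{\p,t}(\I_{i_1\ldots i_n})}{\exp(S_n g_{\p,t}(x))} \leq C(\p,t)$$
for every $x \in \I_{i_1\ldots i_n}$. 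The entire content of the lemma is then to show that $C(\p,t)$ can be chosen independently of $\p$ satisfying Hypothesis \ref{hyp} and of $t \in I$.

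Next I would rewrite the Birkhoff sum. For $x \in \I_{i_1\ldots i_n}$ one has
$$S_n g_{\p,t}(x) = -\beta_{\p}(t) \log |(T^n)^{\prime}(x)| + t \log(p_{i_1}\cdots p_{i_n}),$$
the crucial point being that $f_{\p}$ is \emph{constant} on each level-one cylinder, so $S_n f_{\p}(x)$ is literally independent of $x \in \I_{i_1\ldots i_n}$. For the derivative factor I would invoke bounded distortion (Proposition \ref{bd}) together with the chain rule $|(T^n)^{\prime}(z)| = |T^{\prime}(z) T^{\prime}(Tz)\cdots T^{\prime}(T^{n-1}z)|$; since $\beta_{\p}(t) \leq 1$, raising to the power $\beta_{\p}(t)$ preserves the absolute constant and produces the precise form appearing in the statement.

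The substantive step is the uniformity of $C(\p,t)$. I would trace through the proof of Proposition \ref{exist} and isolate the dependence on the potential. The key observation is that $g_{\p,t}$ has \emph{uniformly bounded distortion}: $\var_n(S_n f_{\p})=0$ because $S_n f_{\p}$ is locally constant on level-$n$ cylinders, while $\var_n(S_n(-\beta_{\p}(t)\log|T^{\prime}|))\leq \log C$ by Proposition \ref{bd} combined with $\beta_{\p}(t) \in [9/16,1]$ on $I$ (from (\ref{916})). This uniform distortion propagates through the transfer-operator argument to yield a uniform oscillation bound $\sup h_{\p,t}/\inf h_{\p,t}\leq K$ for the eigenfunction, which together with $\int h_{\p,t}\,d\tilde\mu_{\p,t} = 1$ forces $K^{-1} \leq h_{\p,t} \leq K$ uniformly. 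The Gibbs bound for the eigenmeasure $\tilde\mu_{\p,t}$ is then immediate from
$$\tilde\mu_{\p,t}(\I_{i_1\ldots i_n}) = \int \l_{\p,t}^n \mathbf{1}_{\I_{i_1\ldots i_n}}(y)\,d\tilde\mu_{\p,t}(y) = \int \exp\!\bigl(S_n g_{\p,t}(T_{i_1}^{-1}\circ\cdots\circ T_{i_n}^{-1}(y))\bigr)\,d\tilde\mu_{\p,t}(y)$$
combined with bounded distortion of $g_{\p,t}$, and passing from $\tilde\mu_{\p,t}$ to $\mu_{\p,t}=h_{\p,t}\tilde\mu_{\p,t}$ costs only the uniform factor $K$.

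The hard part will be the uniformity in $\p$: the set of probability vectors satisfying Hypothesis \ref{hyp} is non-compact and $p_n$ can be arbitrarily small for large $n$. What saves the argument is precisely the locally constant nature of $f_{\p}$: the $\p$-dependence of $g_{\p,t}$ enters only through the \emph{values} $\log p_n$ on single cylinders and never through distortion or H\"older seminorms, so every constant produced by the Perron--Frobenius theory depends only on the Gauss map itself and on the uniform range $\beta_{\p}(t) \in [9/16,1]$ for $t \in I$.
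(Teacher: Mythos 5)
Your proposal is correct and rests on essentially the same two ingredients the paper uses: that $f_{\p}$ is locally constant, so the H\"older seminorm of $g_{\p,t}$ is controlled by that of $\log|T'|$ alone and is therefore independent of $\p$ and $t$, and that the eigenfunction $h_{\p,t}$ has uniformly bounded oscillation. The one place to tighten is that you need not ``trace through the proof of Proposition \ref{exist}'' for the uniform $h_{\p,t}$ bound: the paper has already established precisely that bound as (\ref{fp}) in Lemma \ref{fixed pt} via the Hilbert--Birkhoff cone contraction, so citing that lemma directly makes the uniformity clean and keeps the argument self-contained. Once (\ref{fp}) is in hand, your route --- write $\tilde\mu_{\p,t}(\I_{i_1\ldots i_n})=\int\l_{\p,t}^n\one_{\I_{i_1\ldots i_n}}\,\textup{d}\tilde\mu_{\p,t}$, use bounded distortion of $S_n g_{\p,t}$ to extract $\exp(S_n g_{\p,t}(z))$ up to a uniform multiplicative constant, then convert from $\tilde\mu_{\p,t}$ to $\mu_{\p,t}$ at a uniform cost --- is an equivalent piece of bookkeeping to the paper's Bowen-style telescoping computation, which works with the normalised potential $\tilde g_{\p,t}$ and then undoes the coboundary $\log h_{\p,t}-\log h_{\p,t}\circ T$ at the end; both yield the same conclusion with constants controlled only by $[\log|T'|]_\alpha$, the cone parameter $a$, and the range $\beta_{\p}(t)\in[9/16,1]$.
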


In particular, if Lemmas \ref{lemma periodic} - \ref{measure lemma} hold then for each $\p$ and $t $ one can find $z \in \{z_1, z_2, z_{12}, z_{21}\}$ for which $|f_{\p,t}(z)| \geq c_1$. Therefore, by fixing $m$ sufficiently large that $\alpha^m \leq \frac{c_1}{2c_2}$ it follows that
$$\sigma_{\p,t}^2(f_{\p,t}) \geq \frac{c_1^2}{4}c_3^{-1}\frac{\epsilon^{\frac{m}{4}}}{9^m}$$
for all $t \in I$ and $\p$ that satisfy Hypothesis \ref{hyp}.

\subsection{Proof of Lemma \ref{lemma periodic}} \label{periodic}

We begin by proving Lemma \ref{lemma periodic}. Essentially this boils down to two key observations. Firstly observe that $S_n \tilde{f}_{\p,t}(z)=S_n f_{\p,t}(z)$ for any periodic point $z=T^nz$ since $f_{\p,t}$ and $\tilde{f}_{\p,t}$ are cohomologous. Secondly observe that by the non-linearity of $T$, $-\log|T^{\prime}|$ is not locally constant whereas $f_{\p}$ is. In particular, this means that
$$\left|\log \frac{T^{\prime}(z_1)T^{\prime}(z_2)}{T^{\prime}(z_{12})T^{\prime}(z_{21})}\right|\neq 0.$$ 

\vspace{5mm}  

\noindent \emph{Proof of Lemma \ref{lemma periodic}.} 
Fix $t$ and $\mathbf{p}= (p_1, p_2, \ldots)$. Recall that by the convexity of $\beta_{\p}$, $|\beta_{\p}^{\prime}(t)| \geq |\beta_{\p}^{\prime}(1)|=\dim \mu_{\p} >\frac{3}{4}$. Put 
$$c_{11}= \frac{1}{8} \left|\log \frac{T^{\prime}(z_1)T^{\prime}(z_2)}{T^{\prime}(z_{12})T^{\prime}(z_{21})}\right|>0.$$
Without loss of generality we can assume that both 
\begin{eqnarray}
|f_{\p,t}(z_1)|=|-\beta_{\p}^{\prime}(t)\log|T^{\prime}(z_1)|+\log p_1| < c_{11}
\label{z1}
\end{eqnarray}
and 
\begin{eqnarray}
|f_{\p,t}(z_2)|=|-\beta_{\p}^{\prime}(t)\log|T^{\prime}(z_2)|+\log p_{2}| < c_{11}
\label{z2}
\end{eqnarray}
since otherwise we are done. We will show that this forces $|\frac{1}{2}S_2 f_{\p,t}(z_{12})|>c_{11}$, which will complete the proof.

By (\ref{z1}) and (\ref{z2}) it follows that
$$\frac{1}{2}|-\beta_{\p}^{\prime}(t)\log |T^{\prime}(z_1)T^{\prime}(z_2)| + \log p_1p_{2}| \leq c_{11}.$$
Moreover
\begin{eqnarray*}
4|\beta_{\p}^{\prime}(t)|c_{11}&=&\frac{|\beta_{\p}^{\prime}(t)|}{2}\left|\log \frac{T^{\prime}(z_1)T^{\prime}(z_2)}{T^{\prime}(z_{12})T^{\prime}(z_{21})}\right| \\
 &\leq& \frac{1}{2}\left|-\beta_{\p}^{\prime}(t)\log |T^{\prime}(z_1)T^{\prime}(z_2)|+\log p_1p_{2}\right| \\
& & +\frac{1}{2}\left|-\beta_{\p}^{\prime}(t)\log |T^{\prime}(z_{12})T^{\prime}(z_{21})|+\log p_1p_{2}\right| \\
&\leq &\frac{1}{2}\left|-\beta_{\p}^{\prime}(t)\log |T^{\prime}(z_{12})T^{\prime}(z_{21})|+\log p_1p_{2}\right|+c_{11}.
\end{eqnarray*}
Therefore
\begin{eqnarray*}
\frac{1}{2}\left|-\beta_{\p}^{\prime}(t)\log |T^{\prime}(z_{12})T^{\prime}(z_{21})|+\log p_1p_{2}\right| \geq 4|\beta_{\p}^{\prime}(t)|c_{11}- c_{11} \geq 2c_{11}
\end{eqnarray*}
where the final inequality is because $|\beta^{\prime}_{\p}(t)| \geq \frac{3}{4}$. 

Next, put $c_{12}= \frac{1}{2} \log|T^{\prime}(z_1)|$. Recall that by definition of $\psi$ in (\ref{psi}), if $p_1> \psi$ this implies $\log p_1 \geq -\frac{1}{4} \log|T^{\prime}(z_1)|$. Therefore, since $-\beta_{\p}^{\prime}(t) \geq \frac{3}{4}$ it follows that
$$|f_{\p,t}(z_1)| \geq \frac{3}{4} \log|T^{\prime}(z_1)|-\frac{1}{4} \log|T^{\prime}(z_1)|= \frac{1}{2}\log|T^{\prime}(z_1)|=c_{12}.$$

Finally, putting $c_1=\min\{c_{11}, c_{12}\}$ we complete the proof.
\qed

\subsection{Proof of Lemma \ref{lemma r}}

In this section we will prove Lemma \ref{lemma r}. By \cite[Theorem 2.4.6]{mu} we know that for each $\p$ and $t$ there exist constants $c_{\p,t}>0$, $0<\rho_{\p,t}<1$ such that for all $f \in \mathcal{H}_{\alpha}$ with $\mu_{\p,t}(f)=0$,
$$\norm{\m_{\p,t}^nf}_{\alpha} \leq c_{\p,t}\rho_{\p,t}^n \norm{f}_{\alpha}.$$
We would like to prove a uniform (in $\p$ and $t$) version of the above property. In fact we will work with the Lipschitz norm instead, and show that we can choose uniform $c_4>0$, $0<\rho<1$ such that for all $\p$ and $t$ and  $f \in \lip$ with $\mu_{\p,t}(f)=0$,
\begin{eqnarray}
\norm{\m_{\p,t}^nf}_{0,1} \leq c_4\rho^n \norm{f}_{0,1}.
\label{c3 thing}
\end{eqnarray}
To do this we will make use of `Hilbert-Birkhoff cone theory' \cite{liverani, viana} which provides technology that yields particularly explicit estimates for the rate of decay of norms under transfer operators, which will allow us to verify that a uniform property such as (\ref{c3 thing}) holds. The result will then follow by obtaining upper bounds on $\norm{\m_{\p,t}f_{\p,t}}_{0,1}$ and $\norm{\m_{\p,t}^2f_{\p,t}}_{0,1}$. 

We begin this section by summarising the tools from Hilbert-Birkhoff cone theory and how these can be applied to transfer operators. For more details the reader is directed to \cite{liverani, viana}. For $a>0$ define
\begin{eqnarray*}
\mathcal{C}_a &=& \left\{ w \in \cont: w \geq 0 \textnormal{  and  } w(x) \leq e^{a|x-y|} w(y)\right\}.
\label{cone}
\end{eqnarray*}
Then $\c$ is a closed convex cone; this means that $\lambda w \in \c$  and $w_1 + w_2 \in \c$ for all $\lambda >0$ and all $w, w_1, w_2 \in \c$. We can define a partial ordering $\preceq$ on $V$ by
\begin{eqnarray*}
\begin{array}{ccc}
v \preceq w & \Leftrightarrow & w-v \in \c \cup \{0\}.
\end{array}
\end{eqnarray*}
Moreover, using this partial ordering one can define the \emph{projective metric} $\Theta$ on $\c$; we will not actually require an explicit characterisation of this metric but it is defined and discussed in \cite[Proposition 2.2 and Example 2.3]{viana}. The following proposition follows from \cite[Propositions 2.3 and 2.5]{viana}.

\begin{prop}\label{contract prop}
Let $L:C([0,1]) \to C([0,1])$ be a linear operator and $\c$ be the cone as defined in (\ref{cone}), equipped with the projective metric $\Theta$. Suppose there exists $a>0$ and $\lambda \in (0,1)$ such that $L(\c) \subset \mathcal{C}_{\lambda a}$. Then 
\begin{enumerate}
\item 
\begin{equation}
D=\sup_{v, w \in \c} \Theta(L(v), L(w)) < \infty, \label{diam}
\end{equation}
\item there exists $r \in (0,1)$ that depends only on $D$ such that for all $v, w \in \c$,
$$ \Theta(L(v), L(w)) \leq r\Theta(v,w).$$

\end{enumerate}
\end{prop}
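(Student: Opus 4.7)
This proposition is a standard consequence of Hilbert--Birkhoff cone theory, so the plan is to derive (1) directly from the strict positivity hypothesis $L(\c) \subset \mathcal{C}_{\lambda a}$ and then read off (2) from Birkhoff's classical contraction theorem. The overall strategy is to show that the sub-cone $\mathcal{C}_{\lambda a}$ has finite projective diameter inside $\c$, so its $L$-image inherits the same bound, and then to invoke the fact that any positive linear operator whose image has bounded projective diameter automatically contracts the projective metric with rate $\tanh(D/4)$.

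For (1), I would first unpack $\Theta$: for $v, w \in \c$ set $\alpha(v,w) = \sup\{s > 0 : sv \preceq w\}$ and $\beta(v,w) = \inf\{s > 0 : w \preceq sv\}$, so that $\Theta(v,w) = \log(\beta(v,w)/\alpha(v,w))$. The decisive geometric observation is that $\mathcal{C}_{\lambda a}$ is \emph{strictly} inside $\c$: elements $v \in \mathcal{C}_{\lambda a}$ satisfy the log-Lipschitz bound $v(x) \leq e^{\lambda a |x-y|} v(y)$ with constant $\lambda a < a$, while membership in $\c$ only demands $e^{a|x-y|}$. Given $v, w \in \mathcal{C}_{\lambda a}$, I would seek $s_1 \leq s_2$ with $s_1 v \preceq w \preceq s_2 v$, which unfolds to the requirements $w(x) - s_1 v(x) \leq e^{a|x-y|}(w(y) - s_1 v(y))$ and the analogous upper inequality. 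Rearranging these and using $v(x)/v(y), w(x)/w(y) \leq e^{\lambda a |x-y|}$, the gap between $\lambda a$ and $a$ provides strict slack, forcing $s_1, s_2$ to lie in an interval whose endpoints are comparable with a ratio depending only on $\lambda$ and $a$. This yields $\operatorname{diam}_\Theta \mathcal{C}_{\lambda a} < \infty$, and since $Lv, Lw \in \mathcal{C}_{\lambda a}$ for all $v, w \in \c$, (1) follows.

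For (2), I would appeal to Birkhoff's contraction lemma: for any positive linear operator $L$ preserving the partial order $\preceq$, one always has $\Theta(Lv, Lw) \leq \Theta(v, w)$ (since $sv \preceq w$ implies $sLv \preceq Lw$), and if moreover the image diameter $D$ is finite then the sharper estimate $\Theta(Lv, Lw) \leq \tanh(D/4)\, \Theta(v, w)$ holds. Setting $r = \tanh(D/4) \in (0,1)$, which depends only on $D$, gives the claim. The main obstacle is the diameter bound in (1): the subtle point is that pointwise domination $w \leq sv$ is not sufficient for $w \preceq sv$ (the difference $sv - w$ must itself satisfy the log-Lipschitz condition in $\c$), and one needs to use the slack $a - \lambda a$ quantitatively to conclude that the sandwiching constants can be chosen in a controlled range. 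Everything else is a mechanical application of the cone formalism as set out in \cite{liverani, viana}.
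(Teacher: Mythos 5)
Your proposal is correct and follows the standard Hilbert--Birkhoff cone argument; the paper itself gives no proof but simply cites Propositions 2.3 and 2.5 of Viana's notes, which are precisely the two ingredients you unpack (the finite projective diameter of the strictly interior sub-cone $\mathcal{C}_{\lambda a}$ inside $\mathcal{C}_a$, exploiting the slack $a - \lambda a$, followed by Birkhoff's $\tanh(D/4)$ contraction estimate).
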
 

The following is an easy modification of \cite[Lemma 1.3]{liverani}.

\begin{prop} \label{transfer}
Let $\norm{\cdot}_1$, $\norm{\cdot}_2$ be two norms on $C([0,1])$ and consider the cone $\c$ which induces the partial ordering $\preceq$. Suppose there exists $C \geq 1$ such that for all $f, g \in C([0,1])$
\begin{eqnarray*}
\begin{array}{ccc}
-f \preceq g \preceq f & \Rightarrow & \norm{g}_1 \leq \norm{f}_1 \\
& & \norm{g}_2 \leq C\norm{f}_2.
\end{array}
\end{eqnarray*}
Then given any $f, g \in \c$ for which $\norm{f}_1=\norm{g}_1$,
$$\norm{f-g}_2 \leq C^2(e^{\Theta(f,g)}-1)\norm{f}_2.$$
\end{prop}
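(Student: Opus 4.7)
The plan is to recover from $\Theta(f,g)$ two scalars $\alpha\le\beta$ with $\alpha f\preceq g\preceq\beta f$, use the $\norm{\cdot}_1$-monotonicity together with $\norm{f}_1=\norm{g}_1$ to squeeze $\alpha\le 1\le\beta$, and then apply the $\norm{\cdot}_2$-monotonicity to a single element $h\in\c$ that sandwiches $f-g$ in the partial order.

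Explicitly, I would set $\alpha=\sup\{\lambda\ge 0 : \lambda f\preceq g\}$ and $\beta=\inf\{\mu>0 : g\preceq\mu f\}$. Since $\c$ is defined by closed pointwise inequalities, it is closed under pointwise limits, so $\alpha f\preceq g\preceq\beta f$ really holds and by construction $e^{\Theta(f,g)}=\beta/\alpha$. Because $g\in\c$ gives $-\beta f\preceq 0\preceq g$, the chain $-\beta f\preceq g\preceq\beta f$ is valid, and the $\norm{\cdot}_1$-hypothesis (with constant $1$) forces $\norm{g}_1\le\beta\norm{f}_1$, whence $\beta\ge 1$. The symmetric application using $\alpha f\preceq g$ and $\alpha f\in\c$ yields $\alpha\le 1$. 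This is the only place the $\norm{\cdot}_1$-hypothesis and the normalisation $\norm{f}_1=\norm{g}_1$ enter the proof.

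For the main estimate I would take $h=\max(1-\alpha,\beta-1)\,f\in\c\cup\{0\}$ and verify that $-h\preceq f-g\preceq h$. Applying the $\norm{\cdot}_2$-hypothesis then gives
\begin{align*}
\norm{f-g}_2 &\le C\norm{h}_2 = C\max(1-\alpha,\beta-1)\norm{f}_2 \\
&\le C(\beta-\alpha)\norm{f}_2 = C\alpha\bigl(e^{\Theta(f,g)}-1\bigr)\norm{f}_2 \le C^2\bigl(e^{\Theta(f,g)}-1\bigr)\norm{f}_2,
\end{align*}
where at the last step I use $\alpha\le 1\le C$. The main obstacle is the verification of $-h\preceq f-g\preceq h$: it splits into two cases according to which of $1-\alpha$, $\beta-1$ realises the maximum, and in each case one has to show that both $h+(f-g)$ and $h-(f-g)$ lie in $\c\cup\{0\}$. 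Via closure of $\c$ under addition and positive scaling, each inclusion reduces to a sign condition on $2-\alpha-\beta$ which is exactly the defining inequality of that case (for instance, when $1-\alpha\ge\beta-1$ one needs $2-\alpha-\beta\ge 0$, which is precisely that case). Once this bookkeeping is done the rest is purely algebraic, being an immediate consequence of the definition of the Hilbert projective metric.
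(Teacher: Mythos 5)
Your proof is correct. The paper does not actually supply a proof of this proposition---it simply cites Liverani's Lemma~1.3 as the source and remarks that the statement is an easy modification---so there is no argument of the paper's to compare against. Your argument is a clean and standard Liverani-style cone estimate: define $\alpha,\beta$ as the extremal constants realising $\alpha f\preceq g\preceq\beta f$, use the $\norm{\cdot}_1$-monotonicity together with $\norm{f}_1=\norm{g}_1$ to pin down $\alpha\le 1\le\beta$, sandwich $f-g$ between $\pm\max(1-\alpha,\beta-1)f$, and apply the $\norm{\cdot}_2$-hypothesis once. All the cone manipulations you invoke (closure under addition and positive scaling, the sign conditions on $2-\alpha-\beta$ in the two cases) check out, and they can in fact be unified without cases by writing $Mf\mp(f-g)=(M\mp 1\pm\alpha)f+(g-\alpha f)$ resp.\ $(M+1-\beta)f+(\beta f-g)$. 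One small observation: your proof actually establishes the sharper bound $\norm{f-g}_2\le C\bigl(e^{\Theta(f,g)}-1\bigr)\norm{f}_2$; the stated $C^2$ only enters when you deliberately weaken via $\alpha\le 1\le C$, so your argument subsumes the proposition. The degenerate cases ($\alpha=0$, $\Theta=\infty$, or $f=g$) are trivial and do not affect the argument.
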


We also note that it is easy to check that $-f \preceq g \preceq f$ implies that $\norm{g}_{\infty} \leq \norm{f}_{\infty}$ and $\norm{g}_{L^1(m)} \leq \norm{f}_{L^1(m)}$ for any measure $m$ on $[0,1]$. Additionally one can check that $-f \preceq g \preceq f$ implies that $\norm{g}_{0,1} \leq (1+a)^2 \norm{f}_{0,1}$. 

We will now apply Proposition \ref{contract prop} to the operator $\m_{\p,t}^2$ to deduce that it strictly contracts the projective metric $\Theta$ (with the view to later combine this with Proposition \ref{transfer} in order to prove (\ref{c3 thing})). The following lemma will also provide us with uniform regularity properties for the fixed points $h_{\p,t}$.

\begin{lma}
There exists $a>0$, $D< \infty$ and $r \in (0,1)$ such that for all $v, w \in \c$, 
\begin{eqnarray}
\sup_{v, w \in \c} \Theta(\m^2_{\p,t}(v), \m^2_{\p,t}(w)) \leq D \label{diamm}
\end{eqnarray}
and
\begin{eqnarray} \Theta(\m^2_{\p,t}(v), \m^2_{\p,t}(w)) \leq r\Theta(v,w). \label{contract} \end{eqnarray}
Moreoever, for all $x, y \in [0,1] \setminus \Q$ and all $\p$, $t$
\begin{eqnarray}
\exp(-a|x-y|) \leq \frac{h_{\p,t}(x)}{h_{\p,t}(y)} \leq \exp(a|x-y|). \label{fp} \end{eqnarray}
\label{fixed pt}
\end{lma}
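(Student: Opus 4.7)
The plan is to apply Proposition \ref{contract prop}; the key task is to verify, uniformly over all $\p$ satisfying Hypothesis \ref{hyp} and all $t\in I$, that some iterate of the transfer operator maps the cone $\c$ strictly inside a smaller cone $\mathcal{C}_{\lambda a}$ with $\lambda<1$. The cleanest route is to carry out this analysis first for the unnormalised operator $\l^2_{\p,t}$ on a fixed cone $\mathcal{C}_{a_0}$, deduce the eigenfunction regularity (\ref{fp}) as a by-product (since the unique fixed ray of a strictly contracting cone map must lie in the contracted cone), and then transfer the conclusion to $\m^2_{\p,t}$ via the identity $\m^2_{\p,t}v = h^{-1}_{\p,t}\l^2_{\p,t}(h_{\p,t}v)$.

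For the cone inclusion, I would fix $v\in \mathcal{C}_{a_0}$, expand
\[\l^2_{\p,t}v(x) = \sum_{i,j\in\N}\exp(S_2g_{\p,t}(z_{ij}(x)))\,v(z_{ij}(x))\]
with $z_{ij}(x)\in\I_{ij}$ the preimage of $x$ under $T^2$ in branch $(i,j)$, and compare branch by branch with the analogous sum at $y$. Two uniform ingredients combine. First, $|(T^2)^{\prime}|\ge 9/4$ gives $|z_{ij}(x)-z_{ij}(y)|\le \frac{4}{9}|x-y|$, so the cone condition contributes $v(z_{ij}(x))\le e^{4a_0|x-y|/9}v(z_{ij}(y))$. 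Second, the difference $S_2g_{\p,t}(z_{ij}(x))-S_2g_{\p,t}(z_{ij}(y))$ is bounded by $K|x-y|$ for a uniform constant $K$; this follows by combining bounded distortion (Proposition \ref{bd}), the local H\"older regularity of $-\log|T^{\prime}|$, the uniform bound $\beta_{\p}(t)\in[9/16,1]$ supplied by (\ref{916}), and the crucial fact that $f_{\p}$ is constant on each $1$-cylinder and therefore contributes nothing to this difference. Together these estimates give $\l^2_{\p,t}(\mathcal{C}_{a_0})\subset \mathcal{C}_{K+4a_0/9}$, which is a strict subset of $\mathcal{C}_{a_0}$ once $a_0\ge 9K/5$; Proposition \ref{contract prop} then supplies strict contraction of $\l^2_{\p,t}$ on $\mathcal{C}_{a_0}$, forcing the unique fixed ray $h_{\p,t}$ to lie in $\mathcal{C}_{a_0}$ and hence establishing (\ref{fp}).

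To transfer contraction to $\m^2_{\p,t}$ I would choose $a$ substantially larger than $a_0$ and track the aperture through the composition: for $v\in\c$, $h_{\p,t}v\in\mathcal{C}_{a+a_0}$; applying the cone-inclusion estimate yields $\l^2_{\p,t}(h_{\p,t}v)\in\mathcal{C}_{K+4(a+a_0)/9}$; and since $h^{-1}_{\p,t}\in\mathcal{C}_{a_0}$ as well, one obtains $\m^2_{\p,t}v\in\mathcal{C}_{13a_0/9+K+4a/9}$. Because the $a$-coefficient is only $4/9<1$, choosing $a$ large enough (uniformly in $\p$ and $t$) bounds this aperture by $\lambda a$ for some $\lambda\in(4/9,1)$, giving $\m^2_{\p,t}(\c)\subset \mathcal{C}_{\lambda a}$; a second application of Proposition \ref{contract prop} then yields the constants $D$ and $r$ claimed in (\ref{diamm}) and (\ref{contract}). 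The principal obstacle is guaranteeing that the distortion constant $K$ is genuinely uniform in $\p$ and $t$, which rests on the vanishing of $f_{\p}$ from variations at level $\ge 1$ (so that the weights $p_n$ never enter the estimate) and on the uniform control (\ref{916}) of $\beta_{\p}(t)$ on $I$, which keeps $-\beta_{\p}(t)\log|T^{\prime}|$ within a fixed bounded family of H\"older potentials.
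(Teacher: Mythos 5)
Your proposal follows the same strategy as the paper's proof: work first with $\l^2_{\p,t}$ on a cone $\mathcal{C}_{a_0}$, establish the uniform inclusion $\l^2_{\p,t}(\mathcal{C}_{a_0})\subset\mathcal{C}_{\lambda_0 a_0}$ by exploiting the contraction of inverse branches of $T^2$ together with the local constancy of $f_{\p}$ and uniform H\"older bounds on $\beta_{\p}(t)\log|T^{\prime}|$, deduce (\ref{fp}) from the fixed point lying in the cone, and then propagate to $\m^2_{\p,t}$ by tracking the aperture through the conjugation $\m^2_{\p,t}v=h_{\p,t}^{-1}\l^2_{\p,t}(h_{\p,t}v)$. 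The one place where the paper is more explicit is step (\ref{fp}): rather than invoking an abstract Birkhoff fixed-point argument, it proves directly via Proposition \ref{transfer} that $\l^n_{\p,t}\one$ is Cauchy in the sup norm and hence that $h_{\p,t}=\lim_n \l^n_{\p,t}\one\in\mathcal{C}_{a_0}$, but this is a presentational difference rather than a substantive one.
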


\begin{proof}
We begin by proving that the analogues of (\ref{diamm}) and (\ref{contract}) hold for $\l_{\p,t}^2$ for some $a_0$, $r_0$ and $D_0$. 

Since $f_{\p}$ is locally constant,
$$[g_{\p,t}]_{\alpha}=[\beta_{\p}(t)\log|T^{\prime}|]_{\alpha} \leq [\log|T^{\prime}|]_{\alpha}$$
so there exists $\kappa< \infty$ such that $[g_{\p,t}]_{\alpha} \leq \kappa$ for all $\p$ and $t$. 

Let $a_0>0$, $w \in \mathcal{C}_{a_0}$ and $x, y \in [0,1]$. Recall that for all $x$, $|(T^2)^{\prime}(x)| \geq \frac{1}{\alpha^2}=\frac{9}{4}$. In particular, this means that any local inverse branch of $T^2$ must be contracting by $\alpha^2$. Thus
\begin{eqnarray*}
(\l^2_{\p,t} w)(x) &=&\sum_{\n \in \mathbb{N}^2} \exp(g^2_{\p,t}(T_{\n}^{-1}x))w\left(T_{\n}^{-1}x\right)\\
&\leq&\sum_{\n \in \mathbb{N}^2} \exp(g^2_{\p,t}(T_{\n}^{-1}y))w\left(T_{\n}^{-1}y\right)\exp((2\kappa +a_0)|T_{\n}^{-1}x- T_{\n}^{-1}y|)  \\
&\leq& \sum_{\n \in \mathbb{N}^2} \exp(g^2_{\p,t}(T_{\n}^{-1}y))w\left(T_{\n}^{-1}y\right)\exp(\alpha^2(2\kappa +a_0)|x-y|).
\end{eqnarray*}
Choose $\alpha^2< \lambda_0 <1$ and $a_0 \geq \frac{2\alpha^2\kappa}{\lambda_0 - \alpha^2}$. Then it follows that
\begin{eqnarray}
(\l^2_{\p,t} w)(x) &\leq & (\l^2_{\p,t} w)(y) \exp(a_0\lambda_0 |x-y|).
\label{forcont}
\end{eqnarray}
Clearly $\l_{\p,t}^2w \geq 0$ and $\l_{\p,t}^2 w \in C([0,1])$. Therefore $\l_{\p,t}^2 \mathcal{C}_{a_0} \subset \mathcal{C}_{\lambda_0a_0}$.
Therefore by Proposition \ref{contract prop} there exists $D_0< \infty$ such that $\sup_{v,w \in \c}(\Theta(L(v), L(w))) \leq D_0$ and there exists some $r_0\in (0,1)$ which depends only on $D_0$ for which
\begin{eqnarray} \Theta(\l^2_{\p,t}(v), \l^2_{\p,t}(w)) \leq r_0\Theta(v,w) \label{l contract} \end{eqnarray}
for all $v, w \in \mathcal{C}_{a_0}$. In particular $r_0$ and $D_0$ are independent of $\p$ and $t$. 

Using (\ref{l contract}) we can prove (\ref{fp}) for $a=a_0$. Let $N \in \N$ and consider integers $m, n \geq N$. Using (\ref{l contract}) we can write
$$\Theta(\l_{\p,t}^{2n+k} \one, \l_{\p,t}^{2m+k} \one) \leq r_0^N\Theta(\l_{\p,t}^{2(n-N)+k} \one, \l_{\p,t}^{2(m-N)+k}\one) \leq D_0r_0^N$$
for each $k \in \{0,1\}$. Let $L^1=L^1(\tilde{\mu}_{\p,t})$. Since $\norm{\l_{\p,t}^j \one}_{L^1}= \norm{\one}_{L^1}$ for all $j \in \mathbb{N}$, we can apply Proposition \ref{transfer} to the norms $\norm{\cdot}_1=\norm{\cdot}_{L^1}$ and $\norm{\cdot}_2= \norm{\cdot}_{\infty}$ to deduce that for all $n, m \geq N$,
\begin{eqnarray*}
\norm{\l_{\p,t}^{2n+k} \one- \l_{\p,t}^{2m+k} \one}_{\infty} &\leq& \exp(\Theta(\l_{\p,t}^{2n+k} \one,\l_{\p,t}^{2m+k} \one))-1 \\
&\leq& \exp(D_0r_0^N)-1 \\
&\leq& D_0\exp(D_0)r_0^N.
\end{eqnarray*}
This implies that $\l_{\p,t}^n \one$ is a Cauchy sequence in the uniform norm $\norm{\cdot}_{\infty}$. Thus the limit $\lim_{n \to \infty} \l_{\p,t}^n \one \in \mathcal{C}_{a_0}$ and is a fixed point of $\l_{\p,t}$. In particular, since the fixed point is unique, this means that $h_{\p,t}=\lim_{n \to \infty} \l_{\p,t}^n \one$ and therefore $h_{\p,t}$ satisfies (\ref{fp}) for $a=a_0$. We now use this fact to prove (\ref{diamm}) and (\ref{contract}).

Let $a_1>0$. Then
\begin{eqnarray*}
& &(\m^2_{\p,t} w)(x) =h_{\p,t}^{-1}(x)\sum_{\n \in \mathbb{N}^2} \exp(g^2_{\p,t}(T_{\n}^{-1}x))w\left(T_{\n}^{-1}x\right)h_{\p,t}\left(T_{\n}^{-1}x\right)\\
& &\leq h_{\p,t}^{-1}(x)\sum_{\n \in \mathbb{N}^2} \exp(g^2_{\p,t}(T_{\n}^{-1}y))w\left(T_{\n}^{-1}y\right)h_{\p,t}(T_{\n}^{-1}y)\exp(((2\kappa+a_0+a_1)|T_{\n}^{-1}x- T_{\n}^{-1}y|))  \\
& &\leq h_{\p,t}^{-1}(y) \sum_{\n \in \mathbb{N}^2} \exp(g^2_{\p,t}(T_{\n}^{-1}y))w\left(T_{\n}^{-1}y\right)h_{\p,t}(T_{\n}^{-1}y)\exp(a_0+\alpha^2(2\kappa+ a_0+a_1)|x-y|) .
\end{eqnarray*}
Choose $\alpha^2< \lambda_1 <1$ and $a_1 \geq \frac{a_0+2\alpha^2\kappa+\alpha^2 a_0}{\lambda_1 - \alpha^2}$. Then it follows that
\begin{eqnarray}
(\m^2_{\p,t} w)(x) \leq  (\m^2_{\p,t} w)(y) \exp(a_1\lambda_1 |x-y|).
\label{forcont2}
\end{eqnarray}
Fix $a= \max\{1, a_1\}$. Clearly $\m_{\p,t}^2w \geq 0$ and $\m_{\p,t}^2 w \in C([0,1])$. Therefore $\m_{\p,t}^2 \mathcal{C}_{a} \subset \mathcal{C}_{\lambda a}$. Thus by Proposition \ref{contract prop}, (\ref{contract}) holds. Moreover since $a>a_0$, $h_{\p,t} \in \c$ and so (\ref{fp}) holds.
\end{proof}

Next we obtain a uniform upper bound on the operator norm of $\m_{\p,t}$, when restricted to the cone $\c$.

\begin{lma}
There exists $A>0$ such that for all $f \in \c$ and all $\p$, $t$,
$$\norm{\m_{\p,t}^{2n}f}_{0,1} \leq A\norm{f}_{0,1}.$$
\label{op norm}
\end{lma}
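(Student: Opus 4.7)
The plan is to derive the uniform bound directly from the cone invariance proved in Lemma 4.7, combined with the fact that $\mathcal{M}_{\mathbf{p},t}^{\ast}\mu_{\mathbf{p},t}=\mu_{\mathbf{p},t}$. The guiding principle is that a nonnegative function lying in $\mathcal{C}_a$ automatically has its supremum, its infimum, its integral against any probability measure and its Lipschitz seminorm all comparable, with constants depending only on $a$. Since $a$ was fixed once and for all in Lemma 4.7 independently of $\mathbf{p}$ and $t$, this will give the desired uniformity.

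First I would observe that Lemma 4.7 gives $\mathcal{M}_{\mathbf{p},t}^2(\mathcal{C}_a)\subset\mathcal{C}_{\lambda a}\subset\mathcal{C}_a$, so by induction $\mathcal{M}_{\mathbf{p},t}^{2n}f\in\mathcal{C}_a$ whenever $f\in\mathcal{C}_a$. Consequently for all $x,y\in[0,1]$,
\begin{equation*}
\mathcal{M}_{\mathbf{p},t}^{2n}f(x)\leq e^{a|x-y|}\mathcal{M}_{\mathbf{p},t}^{2n}f(y)\leq e^a\mathcal{M}_{\mathbf{p},t}^{2n}f(y).
\end{equation*}
Taking $y$ to be a point at which the continuous function $\mathcal{M}_{\mathbf{p},t}^{2n}f$ attains a value no larger than its integral against $\mu_{\mathbf{p},t}$, and applying the dual invariance $\mathcal{M}_{\mathbf{p},t}^{\ast}\mu_{\mathbf{p},t}=\mu_{\mathbf{p},t}$ from Proposition \ref{rpf2} iteratively, one obtains
\begin{equation*}
\|\mathcal{M}_{\mathbf{p},t}^{2n}f\|_\infty\leq e^a\int\mathcal{M}_{\mathbf{p},t}^{2n}f\,\textup{d}\mu_{\mathbf{p},t}=e^a\int f\,\textup{d}\mu_{\mathbf{p},t}\leq e^a\|f\|_\infty\leq e^a\|f\|_{0,1}.
\end{equation*}

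For the Lipschitz seminorm I would use the cone bound in both directions: if $\mathcal{M}_{\mathbf{p},t}^{2n}f(x)\geq\mathcal{M}_{\mathbf{p},t}^{2n}f(y)$ then $\mathcal{M}_{\mathbf{p},t}^{2n}f(x)-\mathcal{M}_{\mathbf{p},t}^{2n}f(y)\leq(e^{a|x-y|}-1)\mathcal{M}_{\mathbf{p},t}^{2n}f(y)$, and swapping the roles of $x$ and $y$ handles the opposite case. Since $|x-y|\leq 1$ we have $a|x-y|\leq a$, so the elementary inequality $e^s-1\leq se^s$ yields
\begin{equation*}
|\mathcal{M}_{\mathbf{p},t}^{2n}f(x)-\mathcal{M}_{\mathbf{p},t}^{2n}f(y)|\leq ae^a|x-y|\,\|\mathcal{M}_{\mathbf{p},t}^{2n}f\|_\infty\leq ae^{2a}|x-y|\,\|f\|_{0,1}.
\end{equation*}
Setting $A=e^a+ae^{2a}$ finishes the proof. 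There is no serious obstacle: all the substantive work was already carried out in Lemma 4.7, which produced a single cone parameter $a$ (uniform in $\mathbf{p}$ and $t$) preserved by every $\mathcal{M}_{\mathbf{p},t}^2$. The present lemma is then the routine observation that cone membership converts a trivial $L^1$ bound into a Lipschitz norm bound through the regularising effect of the cone inequality.
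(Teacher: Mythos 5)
Your proof is correct and follows the same core idea as the paper: cone membership of $\m_{\p,t}^{2n}f$ (from Lemma \ref{fixed pt}) converts into the pointwise inequality $F(x)\leq e^{a|x-y|}F(y)$, which bounds the Lipschitz seminorm of $\m_{\p,t}^{2n}f$ by a constant multiple of its sup norm. The only difference is that you derive $\norm{\m_{\p,t}^{2n}f}_\infty\leq e^a\norm{f}_{0,1}$ via dual invariance $\m_{\p,t}^{\ast}\mu_{\p,t}=\mu_{\p,t}$ and an intermediate-value argument, whereas the paper uses the simpler and sharper observation that $\norm{\m_{\p,t}^k f}_\infty\leq\norm{f}_\infty$ for every $k$, which is immediate from $\m_{\p,t}\one=\one$ together with positivity of the operator.
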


\begin{proof}
Firstly, we can immediately see that $\norm{\m_{\p,t}^kf}_{\infty} \leq \norm{f}_{\infty}$ for all $k \in \N$. Next, since $f \in \c$, by Lemma \ref{fixed pt} it follows that $\m_{\p,t}^{2n} f \in \c$ as well and therefore setting $F=\m_{\p,t}^{2n} f$,
$$-(e^{a|x-y|}-1)F(x) \leq F(x)-F(y) \leq (e^{a|x-y|}-1)F(y)$$
for all $x, y \in [0,1]$ which implies that
$$|F(x)-F(y)| \leq ae^a \norm{F}_{\infty} |x-y|$$
that is, $F$ is Lipschitz with Lipschitz constant $[F]_1 \leq  ae^a \norm{F}_{\infty} $. Thus $[\m_{\p,t}^{2n}f]_1 \leq ae^a\norm{\m_{\p,t}^{2n}f}_{\infty} \leq ae^a\norm{f}_{\infty}$. 
\end{proof}

Now using lemmas \ref{fixed pt} and \ref{op norm} we can apply Proposition \ref{transfer} to the operator $\m_{\p,t}^2$ to deduce that (\ref{c3 thing}) holds.

\begin{lma}
There exist constants $c_4>0$ and $0<\rho<1$ such that for all $\p$, $t$ and $f \in \lip$ with $\mu_{\p,t}(f)=0$,
$$\norm{\m_{\p,t}^nf}_{0,1} \leq c_4\rho^n \norm{f}_{0,1}.$$
\label{decay}
\end{lma}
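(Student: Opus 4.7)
The plan is to combine the uniform cone contraction from Lemma \ref{fixed pt} with the uniform operator norm bound from Lemma \ref{op norm} through Proposition \ref{transfer}, together with the standard cone-theoretic decomposition of a zero-mean function. Fix $f \in \lip$ with $\mu_{\p,t}(f)=0$ and set $K=M\norm{f}_{0,1}$, where $M=M(a)$ is chosen large enough (independent of $\p,t$) that both $f+K$ and the constant function $K$ lie in $\c$. Since $\mu_{\p,t}(f)=0$ and $\m_{\p,t}^{\ast}\mu_{\p,t}=\mu_{\p,t}$ by Proposition \ref{rpf2}, one has
$$\int \m_{\p,t}^{2n}(f+K)\,\textup{d}\mu_{\p,t} \;=\; K \;=\; \int \m_{\p,t}^{2n}(K)\,\textup{d}\mu_{\p,t}$$
for every $n\ge 0$, so the two iterates have equal $L^1(\mu_{\p,t})$-norms. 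By Lemma \ref{fixed pt}, $\m_{\p,t}^2$ sends $\c$ into itself and contracts the projective metric $\Theta$ uniformly, giving
$$\Theta\bigl(\m_{\p,t}^{2n}(f+K),\,\m_{\p,t}^{2n}(K)\bigr) \;\le\; r^{n-1}\,D \qquad (n\ge 1).$$

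Next, apply Proposition \ref{transfer} with $\norm{\cdot}_1=\norm{\cdot}_{L^1(\mu_{\p,t})}$ and $\norm{\cdot}_2=\norm{\cdot}_{0,1}$. The required monotonicities ($-f\preceq g\preceq f$ implies $|g|\le f$, hence $\norm{g}_1\le \norm{f}_1$; and $\norm{g}_{0,1}\le (1+a)^2\norm{f}_{0,1}$) are either immediate or recorded in the excerpt, so the constant $C=(1+a)^2$ can be taken uniformly in $\p,t$. This yields
$$\norm{\m_{\p,t}^{2n}(f+K)-\m_{\p,t}^{2n}(K)}_{0,1} \;\le\; (1+a)^4\bigl(e^{Dr^{n-1}}-1\bigr)\,\norm{\m_{\p,t}^{2n}(f+K)}_{0,1}.$$
By linearity, the left hand side equals $\norm{\m_{\p,t}^{2n}f}_{0,1}$. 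For the right, Lemma \ref{op norm} gives $\norm{\m_{\p,t}^{2n}(f+K)}_{0,1}\le A\norm{f+K}_{0,1}\le A(2K+\norm{f}_{0,1})$, which is bounded by a universal multiple of $\norm{f}_{0,1}$; and $e^{Dr^{n-1}}-1\le Dr^{n-1}e^{D}$. Putting these together,
$$\norm{\m_{\p,t}^{2n}f}_{0,1} \;\le\; C_0\, r^{n-1}\norm{f}_{0,1}$$
for some constant $C_0$ depending only on $a$, $A$, $D$ and $M$.

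To handle odd iterates, observe that $\mu_{\p,t}(\m_{\p,t}f)=\mu_{\p,t}(f)=0$, so the even estimate applied to $\m_{\p,t}f$ gives $\norm{\m_{\p,t}^{2n+1}f}_{0,1}\le C_0 r^{n-1}\norm{\m_{\p,t}f}_{0,1}$. A uniform single-step bound $\norm{\m_{\p,t}f}_{0,1}\le B\norm{f}_{0,1}$ follows from a direct computation on $\m_{\p,t}f(x)=\sum_n \exp(\tilde{g}_{\p,t}(T_n^{-1}x))f(T_n^{-1}x)$, using that the inverse branches $T_n^{-1}(x)=1/(n+x)$ are contracting with $|T_n^{-1}(x)-T_n^{-1}(y)|\le |x-y|/n^2$ (so the sum is convergent) and the uniform Lipschitz bound $[\tilde{g}_{\p,t}]_{\alpha}\le \kappa$ established in the proof of Lemma \ref{fixed pt}. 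Setting $\rho=\sqrt{r}$ and $c_4$ to absorb the constants $C_0$, $B$ and $r^{-1}$ gives the desired estimate for all $n$.

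The main obstacle is the bookkeeping required to ensure that every constant appearing in the chain $(M, A, D, r, C_0, B)$ is independent of $\p$ and $t$. The critical places are the choice of $K=M\norm{f}_{0,1}$ (which must push $f+K$ into the cone $\c$ whose aperture $a$ was fixed in Lemma \ref{fixed pt} uniformly in $\p,t$), and the single-step estimate $\norm{\m_{\p,t}f}_{0,1}\le B\norm{f}_{0,1}$, where one must show that the Lipschitz seminorm of $\m_{\p,t}f$ is controlled by $\norm{f}_{0,1}$ with a constant depending only on $\kappa$ and the summable series $\sum_n n^{-2}$; everything else is a routine assembly of the cone machinery.
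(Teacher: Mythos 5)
Your argument is correct and follows the same essential strategy as the paper: push a shifted version of $f$ into the cone $\c$, exploit the uniform projective contraction and diameter bound from Lemma \ref{fixed pt}, compare against a constant with matching $L^1(\mu_{\p,t})$-norm via Proposition \ref{transfer}, and invoke the uniform operator norm bound from Lemma \ref{op norm}. The one genuine (if minor) difference is in the decomposition: the paper splits $f=f_1-f_2$ into positive and negative parts and shifts each of $f_1,f_2$ by $\eta=\norm{f}_{0,1}$, then compares $\m_{\p,t}^{2n}(f_i+\eta)$ with the constant $\mu_{\p,t}(f_i+\eta)\one$ (two applications of Proposition \ref{transfer} and a triangle inequality); you instead shift $f$ itself by $K=M\norm{f}_{0,1}$ so that $f+K\in\c$ in one step, compare $\m_{\p,t}^{2n}(f+K)$ with $\m_{\p,t}^{2n}(K)=K$, and use linearity $\m_{\p,t}^{2n}(f+K)-K=\m_{\p,t}^{2n}f$. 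Your version is slightly cleaner and avoids the positive/negative split at the small cost of a larger shift constant $M\geq 1+1/a$. You also spell out the odd-iterate step, which the paper leaves implicit (its proof handles only $\m_{\p,t}^{2n}$); your single-step bound $\norm{\m_{\p,t}f}_{0,1}\leq B\norm{f}_{0,1}$ is correct, though note that the uniform regularity you need is for $\tilde g_{\p,t}=g_{\p,t}+\log h_{\p,t}-\log h_{\p,t}\circ T$ (obtained by combining $[g_{\p,t}]_\alpha\leq\kappa$ from the proof of Lemma \ref{fixed pt} with the uniform Lipschitz bound on $\log h_{\p,t}$ coming from (\ref{fp}), as the paper records explicitly only later, in the proof of Lemma \ref{measure lemma}).
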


\begin{proof}
Let $f \in \lip$ for which $\mu_{\p,t}(f)=0$. If $f$ is constant, $f=0$ since its integral is 0 and thus the result follows trivially. If $f$ is not constant, $\norm{f}_{0,1} >0$. Let $f_1$ and $f_2$ be the positive and negative parts of $f$ respectively, so that $f=f_1-f_2$ with $f_1, f_2 \geq 0$. We can guarantee that they belong to a cone by adding a constant. In particular, $f_i+ \norm{f}_{0,1} \in \mathcal{C}_1$ for each $i$ since 
\begin{eqnarray*}
\frac{f_i(x)+ \norm{f}_{0,1}}{f_i(y)+ \norm{f}_{0,1}} &=& \exp \left(\log \left( \frac{f_i(x)+ \norm{f}_{0,1}}{f_i(y)+ \norm{f}_{0,1}}\right)\right) \\
&=& \exp \left(\log \left( \frac{f_i(x)-f_i(y)}{f_i(y)+ \norm{f}_{0,1}}+1\right)\right) \\
&\leq& \exp \left(\log \left( \frac{\norm{f}_{0,1}|x-y|}{f_i(y)+ \norm{f}_{0,1}}+1\right)\right) \\
&\leq& \exp \left( \frac{\norm{f}_{0,1}|x-y|}{f_i(y)+ \norm{f}_{0,1}}\right) \\
&\leq& \exp \left( \frac{\norm{f}_{0,1}|x-y|}{\norm{f}_{0,1}}\right) \\
&=& \exp (|x-y|)
\end{eqnarray*}
where the fourth line follows because $\log(1+z) \leq z$ for any $z>-1$. Denote $\eta= \norm{f}_{0,1}$. Then $f_i + \eta \in \c$. Then since $\mu_{\p,t}(f_1)= \mu_{\p,t}(f_2)$ we have
\begin{eqnarray*}
\norm{\m_{\p,t}^{2n}f}_{0,1} &=& \norm{\m_{\p,t}^{2n}(f_1+\eta)-\m_{\p,t}^{2n}(f_2+\eta)}_{0,1} \\
&\leq& \norm{\m_{\p,t}^{2n}(f_1+\eta)-\mu_{\p,t} (f_1+\eta)}_{0,1} +\norm{\m_{\p,t}^{2n}(f_2+\eta)-\mu_{\p,t}(f_2+\eta)}_{0,1}.
\end{eqnarray*}

Denoting $L^1=L^1(\mu_{\p,t})$, we can apply Proposition \ref{transfer} for $\norm{\cdot}_1=\norm{\cdot}_{L^1}$ and $\norm{\cdot}_2=\norm{\cdot}_{0,1}$ to obtain
\begin{eqnarray*}
\norm{\m_{\p,t}^{2n}f}_{0,1} &\leq& (1+a)^2(\exp(\Theta(\m_{\p,t}^{2n}( f_1+\eta), \mu_{\p,t}( f_1+\eta)\one))-1)\norm{\m_{\p,t}^{2n} (f_1+\eta)}_{0,1} \\
& & + (1+a)^2(\exp(\Theta(\m_{\p,t}^{2n} (f_2+\eta), \mu_{\p,t}(f_2+\eta)\one))-1)\norm{\m_{\p,t}^{2n} (f_2+\eta)}_{0,1} .
\end{eqnarray*}

Next we can apply (\ref{contract}) to get
\begin{eqnarray*}
\norm{\m_{\p,t}^{2n}f}_{0,1} &\leq& (1+a)^2(\exp(r^n\Theta(f_1+\eta, \mu_{\p,t}(f_1+\eta)\one))-1)\norm{\m_{\p,t}^{2n} (f_1+\eta)}_{0,1} \\
& & +(1+a)^2(\exp(r^n\Theta( f_2+\eta, \mu_{\p,t}(f_2+\eta)\one))-1)\norm{\m_{\p,t}^{2n} (f_2+\eta)}_{0,1}\\
&\leq& (1+a)^2(\exp(r^nD)-1)A(\norm{f_1+\eta}_{0,1}+\norm{f_2+\eta}_{0,1}) \\
&\leq& (1+a)^2Dr^n\exp(Dr^n) (\norm{f_1}_{0,1}+\norm{f_2}_{0,1}+2\eta)\\
&\leq& 4(1+a)^2ADe^Dr^n\norm{f}_{0,1}
\end{eqnarray*}
where $A$ is the uniform constant from Lemma \ref{op norm}.
\end{proof}

Before we can use the above result to prove Lemma \ref{lemma r}, we need uniform bounds on $\norm{\m_{\p,t}f_{\p,t}}_{0,1}$ and $\norm{\m_{\p,t}^2f_{\p,t}}_{0,1}$ for all $\p$ that satisfy Hypothesis \ref{hyp} and $t \in I$. Observe that for $t \in I$, $|\beta_{\p}^{\prime}(t)|, |p_n^t \log p_n| \leq 8$. To see that this holds for $|p_n^t \log p_n|$, define $\alpha_t(x)= x^t\log x$ for $x \in [0,1]$. Differentiating with respect to $x$ we obtain
$$\frac{\textup{d}}{\textup{d}x} (\alpha_t(x))= tx^{t-1}\log x+x^{t-1}= x^{t-1}(t\log x+1).$$
Clearly the only turning point in $[0,1]$ is $x= e^{-\frac{1}{t}}$ and since $\alpha_t(0)=\alpha_t(1)=0$ this is a local minimum for $\alpha_t$, that is, a local maximum for $|x^t\log x|$. Moreover, for $t \geq \delta>0$, 
$$\alpha_t(e^{-\frac{1}{t}})= e^{-1}\log e^{-\frac{1}{t}}=-\frac{1}{t}e^{-1} \geq -\frac{1}{\delta}e^{-1}= \alpha_{\delta}(e^{-\frac{1}{\delta}}).$$
Therefore, 
$$|x^t\log x| \leq |\alpha_t(e^{-\frac{1}{t}})| \leq |\alpha_{\delta}(e^{-\frac{1}{\delta}})| = \frac{1}{\delta}e^{-1} \leq \frac{1}{\delta}.$$
The claim follows because $I= [\frac{1}{8}, \frac{1}{4}]$.

\begin{lma}
There exists a constant $c_5>0$ such that for all $\p$ that satisfies Hypothesis \ref{hyp}, all $t \in I$ and $k \in \{1,2\}$
\begin{eqnarray*}
\norm{\m_{\p,t}^kf_{\p,t}}_{0,1} \leq c_5. 
\end{eqnarray*} \label{e1e2}
\end{lma}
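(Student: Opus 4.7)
The plan is to bound $\|\m_{\p,t}^k f_{\p,t}\|_{\infty}$ and $[\m_{\p,t}^k f_{\p,t}]_1$ separately, using the pointwise estimate $|f_{\p,t}(y)| \leq 16\log(n+1) + |\log p_n|$ for $y \in \I_n$ (which follows from $|\beta_{\p}'(t)| \leq 8$ on $I$ and $\log|T'(y)| \in [2\log n, 2\log(n+1)]$), together with the uniform Gibbs bound of Lemma \ref{measure lemma} and the uniform regularity of $h_{\p,t}$ from Lemma \ref{fixed pt}.

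For the sup-norm bound, I would expand
\[
\m_{\p,t}^k f_{\p,t}(x) = h_{\p,t}^{-1}(x)\sum_{\i \in \N^k} \exp(S_k g_{\p,t}(y_{\i})) h_{\p,t}(y_{\i}) f_{\p,t}(y_{\i}),
\]
where $y_{\i} \in \I_{\i}$ is the preimage of $x$. By Lemma \ref{fixed pt} and the normalisation $\int h_{\p,t}\,\textup{d}\tilde{\mu}_{\p,t} = 1$, $h_{\p,t}$ is squeezed between $e^{-a}$ and $e^a$, and by Lemma \ref{measure lemma} the Gibbs weight is comparable to $\mu_{\p,t}(\I_{\i}) \asymp \prod_{j} p_{i_j}^t / i_j^{2\beta_{\p}(t)}$. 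Since $f_{\p,t}(y_{\i})$ depends only on $i_1$, summing over the remaining indices gives $|\m_{\p,t}^k f_{\p,t}(x)| \lesssim \sum_n (\log n + |\log p_n|) p_n^t / n^{2\beta_{\p}(t)}$. The crucial uniform input is that $\beta_{\p}(t) \geq 9/16$ on $I$ by (\ref{916}), together with $p_n \leq 1/n$ (since $\p$ is decreasing with unit mass): then $p_n^t \log n / n^{2\beta_{\p}(t)} \leq \log n / n^{5/4}$, while the $|p_n^t \log p_n| \leq 8$ bound noted in the paragraph above the lemma yields $p_n^t|\log p_n|/n^{2\beta_{\p}(t)} \leq 8/n^{9/8}$. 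Both tails are summable uniformly.

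For the Lipschitz bound, I would estimate $|\m_{\p,t}^k f_{\p,t}(x) - \m_{\p,t}^k f_{\p,t}(z)|$ by splitting the variation into three pieces: (i) variation of $h_{\p,t}^{-1}(x)$, controlled by Lemma \ref{fixed pt} which makes $h_{\p,t}$ uniformly Lipschitz on $[0,1]$; (ii) variation of $\exp(S_k g_{\p,t}(y_{\i}))h_{\p,t}(y_{\i})$, controlled by the uniform H\"older bound $[g_{\p,t}]_{\alpha} \leq [\log|T'|]_{\alpha}$ (since $f_{\p}$ is locally constant and $\beta_{\p}(t) \leq 1$) combined with the contraction $|T_{\i}^{-k}(x) - T_{\i}^{-k}(z)| \leq C |x-z|/i_1^2$ from Proposition \ref{bd}; (iii) variation of $f_{\p,t}(y_{\i}(\cdot))$, where again the $f_{\p}$-piece is constant on $\I_{\i}$ and only $-\beta_{\p}'(t)\log|T'|$ contributes, with $|\log|T'(y_1)| - \log|T'(y_2)|| \leq 2(n+1)|y_1 - y_2|$ for $y_1, y_2 \in \I_n$, giving an overall Lipschitz factor of order $1/i_1$. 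After factoring out $|x-z|$, the relevant summand becomes $(\log n + |\log p_n| + 1)p_n^t/n^{1 + 2\beta_{\p}(t)}$, which is even smaller than the one from the sup-norm step and thus summable by the same arguments.

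The main obstacle is the unboundedness of $f_{\p,t}$ (both from $\log|T'|$ and from $f_{\p}$): one has to verify that the logarithmic blow-up on $\I_n$ is dominated uniformly in $\p$ by the Gibbs-measure tail $\mu_{\p,t}(\I_n)$. This is resolved by two uniform ingredients available at this stage: Lemma \ref{measure lemma} (uniform Gibbs comparison) and the uniform lower bound $\beta_{\p}(t) \geq 9/16$ on $I$ from Hypothesis \ref{hyp}, whose conjunction guarantees the polynomial decay $n^{-9/8}$ in the cylinder measures that absorbs the $\log n$ and the $|\log p_n|$ growth.
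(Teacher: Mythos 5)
Your proposal is correct and takes essentially the same route as the paper: expand $\m_{\p,t}^k f_{\p,t}$ in terms of inverse branches, use the uniform pinching of $h_{\p,t}$ from Lemma \ref{fixed pt}, the comparison $\exp(g_{\p,t}(T_n^{-1}x)) \asymp p_n^t/n^{2\beta_{\p}(t)}$, the bounds $|\beta_{\p}'(t)|\leq 8$ and $|p_n^t\log p_n|\leq 8$ on $I$, and $\beta_{\p}(t)\geq 9/16$ to make the tail summable; for the Lipschitz seminorm, split the variation among $h_{\p,t}$, the weight, and $f_{\p,t}$, noting the inverse-branch contraction by $\sim n^{-2}$ dominates the $\sim n$ Lipschitz growth of $\log|T'|$ on $\I_n$. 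The only cosmetic difference is that you invoke Lemma \ref{measure lemma} where the paper uses Proposition \ref{bd} directly (the paper instead packages the branch-wise term as $h_n(x)u_n(x)$ with an explicit $u_n$ and differentiates), but the estimates and the uniform ingredients are identical.
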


\begin{proof}
Observe that
$$\m_{\p,t}f_{\p,t}(x)= h_{\p,t}^{-1}(x) \sum_{n \in \N} \frac{-p_n^t \beta_{\p}^{\prime}(t)\log|T^{\prime}(T_n^{-1}(x))| +p_n^t\log p_n}{|T^{\prime}(T_n^{-1}x)|^{\beta_{\p}(t)}} h_{\p,t}(T_n^{-1}x).$$
By (\ref{fp}) and Proposition \ref{bd}, there exists a uniform constant $C$ which is independent of $\p$, $t$ and $x$ such that
$$|\m_{\p,t}f_{\p,t}(x)| \leq C\sum_{n \in \N} \frac{\log n}{n^{2\beta_{\p}(t)}}.$$
Thus we get a uniform upper bound for $\norm{\m_{\p,t}f_{\p,t}}_{\infty}$ and $\norm{\m_{\p,t}^2f_{\p,t}}_{\infty}$ by recalling that $\norm{\m_{\p,t}^2f_{\p,t}}_{\infty}\leq \norm{\m_{\p,t}f_{\p,t}}_{\infty}$ and because $\beta_{\p}(t) \geq \frac{9}{16}$ by (\ref{916}).

To obtain the desired bound on $[\m_{\p,t}f_{\p,t}]_1$, we write $\m_{\p,t}f_{\p,t}$ in the form
$$\m_{\p,t}f_{\p,t}(x)= \sum_{n \in \N} h_n(x) u_n(x)$$
where $h_n(x)= h_{\p,t}^{-1}(x)h_{\p,t}((x+n)^{-1})$ and $u_n$ is given by
$$u_n(x)= \frac{2p_n^t\beta_{\p}^{\prime}(t)\log(x+n)+p_n^t\log p_n}{(x+n)^{2\beta_{\p}(t)}}.$$
Observe that since $h_{\p,t} \in \c$, we can use the same arguments as in the proof of Lemma \ref{op norm} to deduce that
$|h_{\p,t}(x)-h_{\p,t}(y)| \leq ae^a\norm{h_{\p,t}}_{\infty}|x-y|$. Therefore using (\ref{fp}) and the inequality
$$|h_n(x)-h_n(y)| \leq \norm{h_{\p,t}^{-1}}_{\infty} |h_{\p,t}((x+n)^{-1})-h_{\p,t}((y+n)^{-1})|+ \norm{h_{\p,t}}_{\infty}|h_{\p,t}^{-1}(x)-h_{\p,t}^{-1}(y)|$$
we obtain
\begin{eqnarray*}
|h_n(x)-h_n(y)| &\leq& ae^a\norm{h_{\p,t}}_{\infty}\norm{h_{\p,t}^{-1}}_{\infty}|x-y| +ae^a\norm{h_{\p,t}}_{\infty}^2\norm{h^{-1}_{\p,t}}_{\infty}^2|x-y|\\
&\leq& 2ae^{3a}|x-y|.
\end{eqnarray*}
Using this, (\ref{fp}) and the inequality
$$|h_n(x)u_n(x)-h_n(y)u_n(y)| \leq \norm{h_n}_{\infty} |u_n(x)-u_n(y)|+ \norm{u_n}_{\infty}|h_n(x)-h_n(y)|,$$
we obtain
\begin{eqnarray}
|\m_{\p,t}f_{\p,t}(x)-\m_{\p,t}f_{\p,t}(y)| \leq \sum_{n \in \N} e^a|u_n(x)-u_n(y)|+\sum_{n \in \N} 2ae^{3a}\norm{u_n}_{\infty}|x-y|. \label{new}
\end{eqnarray}
There exists a uniform constant $C$ which is independent of $\p$, $t$ and $x$ such that 
$$u_n(x) \leq C \frac{\log n}{n^{2\beta_{\p}(t)}}$$
therefore the second sum in (\ref{new}) is uniformly bounded for all $\p$ and $t$. To verify that the first sum is bounded by a constant multiple of $|x-y|$, observe that
$$\frac{\textup{d}}{\textup{d}x} u_n(x)= \frac{2p_n^t(\beta_{\p}^{\prime}(t)-2\beta_{\p}^{\prime}(t)\beta_{\p}(t) \log(x+n)-\beta_{\p}(t) \log p_n)}{(x+n)^{2\beta_{\p}(t)+1}}.$$
As before, there exists some uniform constant $C$ which is independent of $\p$, $t$ and $x$ such that
$$\frac{\textup{d}}{\textup{d}x} u_n(x) \leq C \frac{\log n}{n^{2\beta_{\p}(t)+1}}$$
from which it follows that the first sum is also uniformly bounded by some constant multiple of $|x-y|$ which is independent of $\p$ and $t$. We can bound $|\m^2_{\p,t}f_{\p,t}(x)-\m^2_{\p,t}f_{\p,t}(y)|$ similarly, thus the result follows.

\end{proof}

We are now in a position to prove Lemma \ref{lemma r}.

\vspace{5mm}

\noindent \emph{Proof of Lemma \ref{lemma r}.} For all $\p$ that satisfy Hypothesis \ref{hyp} and $t \in I$ and $n \geq 1$,
\begin{eqnarray*}
[\m_{\p,t}^{2n} f_{\p,t}]_{1} &\leq& \norm{\m_{\p,t}^{2n} f_{\p,t}}_{0,1} \\
&\leq& c_4 \rho^{n-1} \norm{\m^2_{\p,t}f_{\p,t}}_{0,1} \\
&\leq& c_4c_5 \rho^{n-1} 
\end{eqnarray*}
where the penultimate inequality follows by Lemma \ref{decay} and the last inequality follows by Lemma \ref{e1e2}. We can obtain an analogous upper bound for $[\m_{\p,t}^{2n+1} f_{\p,t}]_1$ for $n \geq 1$.

Therefore, $[U_{\p,t}]_{1}$ is uniformly bounded in $\p$ and $t$, which in turn implies that $[U_{\p,t}]_{\alpha}$ is uniformly bounded in $\p$ and $t$. Since
$$[f_{\p,t}]_{\alpha}=[\beta_{\p}^{\prime}(t)\log|T^{\prime}|]_{\alpha} \leq \left[\beta_{\p}^{\prime}\left(\frac{1}{8}\right)\log|T^{\prime}|\right]_{\alpha} \leq 8[\log|T^{\prime}|]_{\alpha}$$
the result follows. \qed

\subsection{Proof of Lemma \ref{measure lemma}}

In this section we investigate the Gibbs properties of $\mu_{\p,t}$ and thus prove Lemma \ref{measure lemma}. Consequently this also allows us to deduce that $\int \log|T^{\prime}|d\mu_{\p,t}$,  which appears in the expression for $\beta_{\p}^{\prime\prime}(t)$ in (\ref{beta2}), is uniformly bounded above for any $\p$ that satisfies Hypothesis \ref{hyp} and all $t \in I$.

By definition, $\mu_{\p,t}$ is a Gibbs measure for the potential $g_{\p,t}$ and therefore we know that for each $\p$ and $t$ there exists a constant $0<C_{\p,t}< \infty$ such that for all $n \in \N$ and all $i_1 \ldots i_n \in \Sigma^{\ast}$,
\begin{equation}
C_{\p,t}^{-1} \frac{(p_{i_1}\cdots p_{i_n})^t}{|T^{\prime}(z) \cdots T^{\prime}(T^{n-1}z)|^{\beta_{\p}(t)}} \leq \mu_{\p,t}(\I_{i_1\ldots i_n}) \leq C_{\p,t} \frac{(p_{i_1}\cdots p_{i_n})^t}{|T^{\prime}(z) \cdots T^{\prime}(T^{n-1}z)|^{\beta_{\p}(t)}}.
\label{gibbs pt}
\end{equation}
We'll prove that in fact we can choose a uniform constant $c_3$ such that (\ref{gibbs pt}) becomes
\begin{equation}
c_3^{-1} \frac{(p_{i_1}\cdots p_{i_n})^t}{|T^{\prime}(z) \cdots T^{\prime}(T^{n-1}z)|^{\beta_{\p}(t)}} \leq \mu_{\p,t}(\I_{i_1\ldots i_n}) \leq c_3 \frac{(p_{i_1}\cdots p_{i_n})^t}{|T^{\prime}(z) \cdots T^{\prime}(T^{n-1}z)|^{\beta_{\p}(t)}}
\label{gibbs pt2}
\end{equation}
uniformly for all $\p$ and $t$.

\vspace{5mm}
\noindent \emph{Proof of Lemma \ref{measure lemma}.} Recall that $\tilde{g}_{\p,t}=g_{\p,t}+h_{\p,t}-h_{\p,t} \circ T$ (see Proposition \ref{rpf2}). Since $f_{\p}$ is locally constant, 
$$[g_{\p,t}]_{\alpha}=[-\beta_{\p}(t)\log|T^{\prime}|]_{\alpha} \leq [\log|T^{\prime}|]_{\alpha}$$
and therefore $[g_{\p,t}]_{\alpha}$ can be bounded above by a constant which is independent of $\p$ and $t$. Also if $x, y \in \I_{i_1\ldots i_n}$ then by Lemma \ref{fixed pt}
$$|\log h_{\p,t}(x)-\log h_{\p,t}(y)|= \left|\log \frac{ h_{\p,t}(x)}{ h_{\p,t}(y)}\right| \leq a|x-y| $$
and therefore $[h_{\p,t}]_{\alpha}$ can be bounded above by a constant which is independent of $\p$ and $t$. Therefore there exists $\tau>0$ such that $[\tilde{g}_{\p,t}]_{\alpha} \leq \tau$ for all $\p$ and $t$.

Now we can apply arguments similar to \cite{bowen}. Let $n \in \N$ and any $i_1\ldots i_n \in \N^n$. Then
\begin{eqnarray*}
\mu_{\p,t}(\I_{i_2,\ldots,i_n}) &=& \int \one_{\I_{i_2,\ldots,i_n}}(x) \textup{d}\mu_{\p,t}(x) \\
&=& \int \sum_{Ty=x} \one_{\I_{i_1, i_2,\ldots,i_n}}(y) \textup{d}\mu_{\p,t}(x) \\
&=& \int \sum_{Ty=x} \exp(\tilde{g}_{\p,t}(y))\one_{\I_{i_1, \ldots, i_n}}(y)\exp(-\tilde{g}_{\p,t}(y))\textup{d}\mu_{\p,t}(x) \\
&=& \int \m_{\p,t}(\one_{\I_{i_1, \ldots, i_n}}(x)\exp(-\tilde{g}_{\p,t}(x)))\textup{d}\mu_{\p,t}(x) \\
&=& \int_{\I_{i_1, \ldots, i_n}} \exp(-\tilde{g}_{\p,t}(x))\textup{d}\mu_{\p,t}(x)
\end{eqnarray*}
where the final line follows because $\m_{\p,t}^{\ast} \mu_{\p,t}=\mu_{\p,t}$.

Let $z \in \I_{i_1\ldots i_n}$. Then
$$\mu_{\p,t}(\I_{i_2, \ldots, i_n})\exp(\tilde{g}_{\p,t}(z)) \leq \exp(\alpha^n[\tilde{g}_{\p,t}]_{\alpha}) \mu_{\p,t}(\I_{i_1, \ldots, i_n})$$
so that
$$\frac{\mu_{\p,t}(\I_{i_1, \ldots, i_n})}{\mu_{\p,t}(\I_{i_2, \ldots, i_n})} \exp(-\tilde{g}_{\p,t}(z)) \geq \exp(-\alpha^n[\tilde{g}_{\p,t}]_{\alpha}).$$
Moreover, we can proceed to obtain the following sequence of inequalities
\begin{eqnarray*}
\frac{\mu_{\p,t}(\I_{i_2, \ldots, i_n})}{\mu_{\p,t}(\I_{i_3, \ldots, i_n})} \exp(-\tilde{g}_{\p,t}(Tz)) &\geq& \exp(-\alpha^{n-1}[\tilde{g}_{\p,t}]_{\alpha})\\ 
\vdots \\
\mu_{\p,t}(\I_{i_n}) \exp(-\tilde{g}_{\p,t}(T^{n-1}z)) &\geq& \exp(-\alpha[\tilde{g}_{\p,t}]_{\alpha}).
\end{eqnarray*}
Multiplying these all together we obtain
\begin{eqnarray}
\frac{\mu_{\p,t}(\I_{i_1, \ldots, i_n})}{\exp(S_n\tilde{g}_{\p,t}(z))} &\geq& \exp\left(-\frac{[\tilde{g}_{\p,t}]_{\alpha}}{1-\alpha}\right).
\label{gibbs tilde}
\end{eqnarray}
Now, 
\begin{eqnarray*}
S_n( \log h_{\p,t}-\log h_{\p,t} \circ T)(z)&=&  \log h_{\p,t}(z)-\log h_{\p,t}(Tz) \\
& & +  \log h_{\p,t}(Tz)-\log h_{\p,t}(T^2z) \\
& & \vdots \\
& & + \log h_{\p,t}(T^{n-1}z)-\log h_{\p,t}(T^nz) \\
&=& \log \frac{h_{\p,t}(z)}{h_{\p,t}(T^n z)} \geq -a.
\end{eqnarray*}
Plugging this into (\ref{gibbs tilde}) we obtain
\begin{eqnarray}
\frac{\mu_{\p,t}(\I_{i_1, \ldots, i_n})}{\exp(S_ng_{\p,t}(z))} &\geq& \exp\left(-\frac{[\tilde{g}_{\p,t}]_{\alpha}}{1-\alpha}-a\right) \geq \exp\left(-\frac{\tau}{1-\alpha}-a\right).
\label{gibbs gpt}
\end{eqnarray}
By rearranging this inequality and expanding the ergodic sum we obtain the desired lower bound. The upper bound follows by an analogous argument.
\qed

We can now deduce that $\int \log|T^{\prime}|d\mu_{\p,t}$ is uniformly bounded above for all $\p$ and $t$.

\begin{lma} 
There exists a uniform constant $L$ such that for all $\p$ that satisfy Hypothesis \ref{hyp} and $t\in I$, 
$$\int \log|T^{\prime}|d\mu_{\p,t} \leq L.$$
\label{e2 proof}
\end{lma}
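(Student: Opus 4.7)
The plan is to use the uniform Gibbs bound from Lemma \ref{measure lemma} to estimate the mass $\mu_{\p,t}(\I_n)$ of each level-one cylinder, then decompose the integral as a sum over $n \in \N$ and show that the resulting series converges at a rate independent of $\p$ and $t$.

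First I would observe that on $\I_n = [\frac{1}{n+1}, \frac{1}{n})$ one has $\log|T^{\prime}(x)| = -2\log x$, and hence $\log|T^{\prime}(x)| \leq 2\log(n+1)$. Picking any $z \in \I_n$, Lemma \ref{measure lemma} together with $|T^{\prime}(z)| \geq n^2$ yields
\begin{equation*}
\mu_{\p,t}(\I_n) \leq c_3 \frac{p_n^t}{|T^{\prime}(z)|^{\beta_{\p}(t)}} \leq c_3\, p_n^t\, n^{-2\beta_{\p}(t)}.
\end{equation*}
By the standing assumption $p_n = O(1/n^2)$ made in the introduction, $p_n^t \leq K^t n^{-2t}$ for some $K>0$, and since $t \in I = [\frac{1}{8}, \frac{1}{4}]$ the factor $K^t$ is itself uniformly bounded. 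Combining gives a uniform bound
\begin{equation*}
\mu_{\p,t}(\I_n) \leq C\, n^{-2(\beta_{\p}(t)+t)}
\end{equation*}
for some $C>0$ independent of $\p$ and $t$.

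Next, I would exploit the key uniform lower bound on the exponent. By (\ref{916}) we have $\beta_{\p}(t) \geq \frac{9}{16}$ for all $t \in I$, and since $t \geq \frac{1}{8}$,
\begin{equation*}
\beta_{\p}(t) + t \geq \tfrac{9}{16} + \tfrac{1}{8} = \tfrac{11}{16} > \tfrac{1}{2},
\end{equation*}
uniformly in $\p$ satisfying Hypothesis \ref{hyp} and $t \in I$. Therefore
\begin{equation*}
\int \log|T^{\prime}|\, \textup{d}\mu_{\p,t} = \sum_{n=1}^{\infty} \int_{\I_n} \log|T^{\prime}|\, \textup{d}\mu_{\p,t} \leq \sum_{n=1}^{\infty} 2\log(n+1)\cdot C\, n^{-11/8},
\end{equation*}
and the right hand side is a finite constant $L$ that depends on neither $\p$ nor $t$.

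No step here looks seriously delicate: the substantive content has already been packed into Lemma \ref{measure lemma} (the uniform Gibbs property) and the convexity bound (\ref{916}) on $\beta_{\p}$. The only point that needs care is checking that $\beta_{\p}(t)+t$ stays strictly above $1/2$ uniformly, since exactly this is what makes the tail $\sum_n (\log n)/n^{2(\beta_{\p}(t)+t)}$ summable with a $\p$- and $t$-independent bound; the restriction to the compact interval $I$ combined with the hypothesis $\dim\mu_{\p} \geq 3/4$ is precisely what buys this margin.
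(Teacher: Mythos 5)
Your proof is correct and matches the paper's argument: decompose the integral over level-one cylinders, apply the uniform Gibbs estimate from Lemma \ref{measure lemma}, and invoke the lower bound $\beta_{\p}(t)\geq \frac{9}{16}$ from (\ref{916}) to get a $\p$- and $t$-independent convergent series. The only (harmless) extra step you take is invoking $p_n=O(1/n^2)$ to improve the exponent to $2(\beta_{\p}(t)+t)$; the paper simply uses $p_n^t\leq 1$, since $\beta_{\p}(t)\geq\frac{9}{16}>\frac12$ already suffices for summability of $\sum_n (\log n)/n^{2\beta_{\p}(t)}$.
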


\begin{proof}
By Lemma \ref{measure lemma}, $\mu_{\p,t}(\I_n) \leq c_3 \frac{p_n^t}{|T^{\prime}(x)|^{\beta_{\p}(t)}}$ for any $\p$ which satisfies Hypothesis \ref{hyp}, any $t \in I$, any $n \in \N$ and $x \in \I_n$. Therefore for all $\p$ and $t$ we have
\begin{eqnarray*}
\int \log|T^{\prime}|d\mu_{\p,t}& \leq& C\sum_{n \in \mathbb{N}} \sup_{x \in \I_n}\frac{\log|T^{\prime}(x)|}{|T^{\prime}(x)|^{\beta_{\p}(t)}}\leq c_3 \sum_{n \in \mathbb{N}}\frac{2\log n}{n^{2\beta_{\p}(t)}}.
\end{eqnarray*}
Since by (\ref{916}) we know $\beta_{\p}(t) \geq \frac{9}{16}$, the result follows.
\end{proof}

\subsection{Proof of Lemma \ref{top bound}}

Suppose $\p$ satisfies that $p_1, p_2 \geq \epsilon$. By Lemma \ref{lemma periodic} there exists $z \in \{z_{1}, z_{2}, z_{12}\}$ such that $\frac{1}{2}|S_2 \tilde{f}_{\p,t}(z)| \geq c_1$.  Let $z=\Pi(\i)$. Fix $m$ sufficiently large that $\alpha^m \leq \frac{c_1}{2c_2}$. By Lemma \ref{measure lemma} and the fact that $\inf_{x \in \I_1 \cup \I_2} \frac{1}{|T^{\prime}(x)|}=\frac{1}{9}$,
$$\mu_{\p,t}(\I_{i_1 \ldots i_m}) \geq c_3^{-1} \frac{\epsilon^{\frac{m}{4}}}{9^m}.$$

By (\ref{beta2}), (\ref{strategy}), Corollary \ref{rewrite2} and Lemma \ref{e1e2}
\begin{eqnarray}
\beta_{\p}^{\prime\prime}(t) \geq  \frac{c_1^2 \epsilon^{\frac{m}{4}}}{4 \cdot 9^mc_3L}.
\label{gamma}
\end{eqnarray}
Similarly if instead $\p$ satisfies that $p_1> \psi$, by Lemma \ref{lemma periodic} it follows that $f_{\p,t}(z_1) \geq c_1$. Let $z_1=\Pi(\i)$. Again by Lemma \ref{measure lemma},
$$\mu_{\p,t}(\I_{i_1 \ldots i_m}) \geq c_3^{-1} \frac{\psi^{\frac{m}{4}}}{9^m} \geq \frac{\epsilon^{\frac{m}{4}}}{9^m}$$
and therefore (\ref{gamma}) also holds. Set $\gamma_{\epsilon}= \frac{\epsilon^{\frac{m}{4}}}{9^m}$. Then 
\begin{eqnarray*}
-1=\beta_{\p}(1)-\beta_{\p}(0)= \int_0^1 \beta_{\p}^{\prime}(t) \textup{d}t \leq \beta_{\p}^{\prime}(1)- r \gamma_{\epsilon}
\end{eqnarray*}
for some constant $r$. The result follows from the fact that $-\beta_{\p}^{\prime}(1)=\dim \mu_{\p}$. \qed

\section{Proof of Theorem \ref{main}}

By Lemma \ref{infinite thm} we can restrict to the case where $h(\mu_{\p})< \infty$. The proof then follows from Lemmas \ref{tail lemma} and \ref{top bound}. Fix $\epsilon=\epsilon_0>0$ that satisfies Lemma \ref{tail lemma}. If $\p$ does not satisfy Hypothesis \ref{hyp} then either $\dim \mu_{\p} \leq \frac{3}{4}$ or 
$$\dim \mu_{\p} \leq s+\frac{\kappa(s)}{\lambda_0}<1$$
by Lemma \ref{tail lemma}. Otherwise, by Lemma \ref{top bound} 
$$\dim \mu_{\p} \leq 1-r\gamma_{\epsilon_0}$$
which proves the existence of a dimension gap.

\section{Generalisations}

The method used in this paper can be generalised to prove the existence (and bounds on) a dimension gap for more general countable branch expanding maps under a suitable `non-linearity' assumption on the map.

In particular, let $\{\mathcal{I}_n\}_{n \in \mathbb{N}}$ be a countable collection of open non-empty disjoint subintervals of $[0,1]$ such that $(0,1) \subset \bigcup_{n \in \N} \overline{\I_n}$ and let $T_n: \overline{\I}_n \to [0,1]$ be a sequence of expanding bijective $C^2$ maps (so $|T_n^{\prime}|>1$). Define $T: [0,1] \to [0,1]$ as
\begin{eqnarray*}
\begin{array}{ccccc}
T(x)&=& T_n(x) & \textnormal{if}& x \in \overline{\I}_n \\
T(0)&=&0 & & 
\end{array}
\end{eqnarray*}
where we put $T(x)=T_k(x)$ for $k= \min\{n: x \in \overline{\mathcal{I}}_n\}$ if $x$ is a common endpoint of two intervals. Similarly, we adopt the convention that $T^{\prime}(x)= T_k^{\prime}(x)$ where $k=\min\{n: x \in \overline{\mathcal{I}}_n\}$. 

Let $T:[0,1] \to [0,1]$ be a countable branch expanding (Markov) map as described above. Additionally assume that $T$ satisfies the following conditions:
\begin{enumerate}
\item \textbf{Some iterate of $T$ is uniformly expanding.} There exists $l \in \N$ and $\Lambda>1$ for which
$$|(T^l)^{\prime}(x)| \geq \Lambda >1$$ 
for all $x \in [0,1]$.
\item \textbf{R\'enyi condition.} 
There exists $\kappa<\infty$ such that
\begin{eqnarray}
\sup_{n \in \mathbb{N}} \sup_{x, y, z \in \I_n} \left| \frac{T^{\prime\prime}(x)}{T^{\prime}(y)T^{\prime}(z)} \right| = \kappa < \infty. \label{renyi}
\end{eqnarray}
\item \textbf{Fast decaying interval lengths.} There exists $s<1$ such that
$$\sum_{n \in \N} |\I_n|^s < \infty.$$
\item \textbf{Non-linearity assumption.} 
\begin{eqnarray}
T^{\prime}(z_1)T^{\prime}(z_2) \neq T^{\prime}(z_{12})T^{\prime}(z_{21}).
\label{nonlin}
\end{eqnarray}
\end{enumerate}
Then there exists some $\eta>0$ for which
$$\sup_{\p \in \mathcal{P}} \dim \mu_{\p} \leq 1-\eta$$
and $\eta$ depends on $\Lambda, l, \kappa, s$ and a `non-linearity' constant $\theta$ which is given by 
$$\theta=\left|\log \frac{T^{\prime}(z_1)T^{\prime}(z_2)}{T^{\prime}(z_{12})T^{\prime}(z_{21})}\right| \neq 0. $$

We now make some remarks about assumptions (1)-(4). Firstly, (2) guarantees that $-\log|T^{\prime}|$ is locally H\"older, which we saw was crucial for the proof. This in turn allows one to show that an analogue of Proposition \ref{bd} holds for $T$, which is also utilised at many points throughout the proof. In fact, the reason why our method yields a particularly poor estimate on the dimension gap when $T$ is the Gauss map is precisely because the constant $\kappa$ in (\ref{renyi}) is given by $\kappa=16$, which ends up appearing in several exponents throughout the proof. 

Next, we note that (3) is a sharp condition. To see this, suppose there does not exist $s<1$ for which
$ \sum_{n \in \mathbb{N}} |\mathcal{I}_n|^s < \infty.$
Let $0<t<1$ be arbitrary. By assumption
$\sum_{n=1}^{\infty} |\I_n|^t= \infty.$
Thus, we can choose some large $N$ for which
$\sum_{n=N}^{k} |\I_n|^t \geq 1$
for some $k>N$. Fix $\mathbf{p}_N= (p_1, p_2, \ldots)$ where 
\begin{eqnarray*}
p_n=
\left\{ \begin{array}{cccc} 0 & n < N &\textnormal{or}& n > k \\
c|\I_n|^t & N \leq p_n \leq k & & \\
\end{array}
\right.
\end{eqnarray*}
where $c$ is a normalising constant so that $\sum_{n=N}^{k}c|p_n|^t=1$. Consider the Bernoulli measure $\mu_{\p_N}$. Since $h(\mu_{\p_N}) < \infty$ it follows that the dimension $\dim \mu_{\p_N}= \frac{h(\mu_{\p_N})}{\chi(\mu_{\p_N})}$. Applying the analogue of Proposition \ref{bd} it follows that there exists a uniform constant $C>0$ for which $\log |T^{\prime}(x)| \leq -\log|\I_n|+ C$ for all $n \in \N$ and all $x \in \I_n$. Therefore
\begin{eqnarray*}
\dim \mu_{\p_N} &\geq& \frac{- \sum_{n=N}^{k} c|\I_n|^t \log c|\I_n|^t}{-\sum_{n=N}^{k} c|\I_n|^t(\log|\I_n|-C)} \\
&=& \frac{-t \sum_{n=N}^{k} (c|\I_n|^t \log |\I_n|)- \log c}{-\sum_{n=N}^{k}( c|\I_n|^t\log|\I_n|)+ C}.
\end{eqnarray*}

Since $N$ can be chosen arbitrarily large to make $-\sum_{n=N}^{k}( c|\I_n|^t\log|\I_n|)$ arbitrarily large, we deduce that $\dim \mu_{\p_N} \to t$ as $N \to \infty$. Therefore, for all $0<t<1$ we can choose a Bernoulli measure with dimension greater than $t$, proving that a dimension gap does not exist. 

Finally, (4) describes the fact that $T$ sees some non-linearity on one of the first two branches. This is precisely the property that was used in Lemma \ref{periodic} and would be sufficient (though not necessary) to prove an analogue of Lemma \ref{periodic} for a more general map $T$.

\vspace{0.5cm}
\noindent \textbf{Acknowledgements.}  This paper is part of the author's PhD thesis conducted at the University of Warwick. The author is extremely grateful to her supervisor Mark Pollicott for suggesting the problem which is  studied in this paper and for many useful discussions. This paper was written while the author was supported by a \emph{Leverhulme Trust Research Project Grant} (RF-2016-194). The author would also like to thank Marc Kesseb\"ohmer, Thomas Jordan and Simon Baker for helpful conversations.

\end{document}